\documentclass[12pt]{article}
\usepackage{amsmath}
\usepackage{amsfonts,amsmath,amssymb,amsthm}
\usepackage{graphicx,psfrag,epsf}
\usepackage{enumerate}
\usepackage[numbers]{natbib}
\RequirePackage[colorlinks,citecolor=blue,linkcolor=blue,urlcolor=blue]{hyperref}

\usepackage{url} % not crucial - just used below for the URL 

%\pdfminorversion=4
% NOTE: To produce blinded version, replace "0" with "1" below.
\newcommand{\blind}{0}

% DON'T change margins - should be 1 inch all around.

\usepackage{bm}
\usepackage{tikz}
\usepackage[capitalise]{cleveref}

\addtolength{\oddsidemargin}{-.5in}%
\addtolength{\evensidemargin}{-.5in}%
\addtolength{\textwidth}{1in}%
\addtolength{\textheight}{0.8in}%
\addtolength{\topmargin}{-0.8in}%

%%%%%%%%%%%%%%%%%%%%%%%%%%%
\usepackage[shortlabels]{enumitem}

\newtheorem{theorem}{Theorem}
\newtheorem{definition}{Definition}
\newtheorem{corollary}{Corollary}
\newtheorem{lemma}{Lemma}
\newtheorem{proposition}{Proposition}
\newtheorem{remark}{Remark}
\newcommand{\bw}{\mathbf{w}}
\newcommand{\bX}{\mathbf{X}}

\newcommand{\rb}{\mathrm{B}}

     % convergence in distribution
    % convergence in probability
 	      % convergence almost surely
\newcommand{\iid}{\stackrel{iid}{\sim}}		      % ~ iid symbol

\newcommand{\hth}{\hat{\theta}}
\newcommand{\hbh}{\hat{\beta}}

\newcommand{\tilth}{\tilde{\theta}}
\newcommand{\datmat}{\mathcal{D}_n}

\def\R{\mathbb{R}}
\def\E{\mathbb{E}}
\def\P{\mathbb{P}}
\def\cov{\mathrm{Cov}}
\def\var{\mathrm{Var}}
\def\V{\mathrm{Var}}

\def\one{\mathbf{1}}
\def\D{\mathcal{D}}

\def\U{\mathbf{u}}
\def\A{\mathrm{A}}
\def\B{\mathrm{B}}
\def\rd{\mathrm{D}}
\def\rp{\mathrm{P}}
\def\rg{\mathrm{G}}
\def\rh{\mathrm{H}}

\def\rl{\mathrm{L}}
\def\ru{\mathrm{U}}
\def\rv{\mathrm{V}}

\def\sij{\mathrm{ps}\text{-}\mathrm{IJ}}
\def\O\mathcal{O}
\def\hij{\widehat{\mathrm{IJ}}}
\def\ij{\mathrm{IJ}}

\newcommand{\bone}{\mathbf{1}}

\DeclareMathOperator*{\argmin}{arg\,min}

%%%%%%%%%%%%%%%%%%%%%%%%%%%%

\begin{document}

\def\spacingset#1{\renewcommand{\baselinestretch}%
{#1}\small\normalsize} \spacingset{1}

%%%%%%%%%%%%%%%%%%%%%%%%%%%%%%%%%%%%%%%%%%%%%%%%%%%%%%%%%%%%%%%%%%%%%%%%%%%%%%

\if0\blind
{
  \title{\bf Bias, Consistency, and Alternative Perspectives of the Infinitesimal Jackknife}
  \author{Wei Peng, Lucas Mentch\thanks{
    This work was partially supported by NSF DMS-1712041}  \hspace{.2cm}\\
    Department of Statistics, University of Pittsburgh\\
    and \\
    Leonard Stefanski\\
    Department of Statistics, North Carolina State University}
  \maketitle
} \fi

\if1\blind
{
  \bigskip
  \bigskip
  \bigskip
  \begin{center}
    {\LARGE\bf Infinitesimal Jackknife Perspectives}
\end{center}
  \medskip
} \fi

\bigskip
\begin{abstract}
	Though introduced nearly 50 years ago, the infinitesimal jackknife (IJ) remains a popular modern tool for quantifying predictive uncertainty in complex estimation settings.  In particular, when supervised learning ensembles are constructed via bootstrap samples, recent work demonstrated that the IJ estimate of variance is particularly convenient and useful. However, despite the algebraic simplicity of its final form, its derivation is rather complex.  As a result, studies clarifying the intuition behind the estimator or rigorously investigating its properties have been severely lacking. This work aims to take a step forward on both fronts. 
	We demonstrate that surprisingly, the exact form of the IJ estimator can be obtained via a straightforward linear regression of the individual bootstrap estimates on their respective weights or via the classical jackknife.  The latter realization is particularly useful as it allows us to formally investigate the bias of the IJ variance estimator and better characterize the settings in which its use is appropriate.
	Finally, we extend these results to the case of U-statistics where base models are constructed via subsampling rather than bootstrapping and provide a consistent estimate of the resulting variance.
	
\end{abstract}

%\newpage
\spacingset{1.5} % DON'T change the spacing!

%%%%%%%%%%%%%%%%%%%%%%%%%%%%%%%%%%%%%%%%%%%%%%%%%%
% 1.  Introduction
%%%%%%%%%%%%%%%%%%%%%%%%%%%%%%%%%%%%%%%%%%%%%%%%%%

\section{Introduction}
It is difficult to overstate the importance and utility of resampling methods and the bootstrap in particular for determining properties of estimators whenever exact, explicit sampling distributions cannot be readily determined.  Given a sample $X_1, ..., X_n \sim \mathrm{P}$, a parameter of interest $\theta$, and an estimator $\hat{\theta} = s(X_1, ..., X_n)$, it is often of interest to estimate, for example, $\V(\hat{\theta})$. Let $\bm{x}=(x_1, ..., x_n)$ denote the observed values of the sample so that for a particular realization, $\hat{\theta} = s(\bm{x})$.  To provide bootstrap estimate of the variability of the estimator, we can draw $B$ (re)samples of size $n$ with replacement from $\{x_1, ..., x_n\}$ to form bootstrap samples $\bm{x}^{*}_{1}, ..., \bm{x}^{*}_{B}$ from which we calculate bootstrap estimates $\hat{\theta}_1, ..., \hat{\theta}_B$.  The nonparametric bootstrap variance estimate of $\hth$ is then taken as the empirical variance of $\hth_1, ..., \hth_B$ \cite{Efron1979,Efron2014}.

Within this context, given the necessity of calculating $\hth_1, ..., \hth_B$, it is natural to instead consider the estimator
\begin{equation}
	\label{eqn:bagest}
	\tilth_B = \frac{1}{B} \sum_{b=1}^{B} \hth_b
\end{equation}
\noindent as a ``bootstrap smoothed'' alternative to the original $\hth$ \cite{EfronBootstrap}.  This sort of bootstrap aggregation (bagging) was also proposed by \cite{bagging} as a means by which predictive variance may be reduced when each bootstrap sample is used to construct an individual model, frequently a classification or regression tree.

The standard bootstrap approach -- referred to recently as the \emph{brute force} approach by \cite{Efron2014} -- to assessing the variability of $\tilth_B$, though straightforward, is computationally quite burdensome, requiring several bootstrap replicates of not only the original data, but also from within the bootstrap samples themselves.  This double bootstrap, first proposed in \cite{Beran1988}, is especially costly whenever the original statistic $s$ is computationally costly.

A variety of approaches have been suggested to reduce the computational burden of these sorts of problems.  Some of these \citep{White2000,Davidson2000,Davidson2002,Davidson2007} employ what is now referred to as the fast double bootstrap whereby only a single second-level bootstrap sample is collected.  This approach was utilized by \cite{Giacomini2013} in running Monte Carlo experiments and a more careful analysis is given in \cite{Chang2015}.   Sexton and Laake \cite{SextonLaake} propose some alternative nonparametric means by which $\V(\tilth_B)$ may be estimated, suggesting also that the number of second-level bootstrap replicates $B^{'}$ need only be a fraction of the original resample size $B$.  In lieu of full bootstrap samples, subsampling, or $m$-out-of-$n$ bootstrap sampling, was proposed by \cite{Politis1994} and \cite{Bickel1997}.  More recently, \cite{Sengupta2016} proposed a combination of these approaches, first subsampling and then employing a single second-level resample.  Similarly, \cite{Kleiner2014} proposed the bag of little bootstraps which involves splitting the original dataset into a number of subsamples and then taking bootstrap samples on each subset allowing the process to more easily scale by being capable of efficiently running in parallel.

Though the above approaches can substantially reduce the computational complexity in estimating the variance of estimators based on resampling procedures, each nonetheless involves further resampling in order to obtain such an estimate.  Recently, motivated by the problem of taking into account not only the sampling variability but also the variability in model selection, Efron \cite{Efron2014} alleviated this issue by developing an algebraically compact, closed-form estimator for the variance of a bagged estimate.  Instead of additional resampling, Efron's proposal required only additional bookkeeping to recall which samples in the original data appeared how many times in each bootstrap sample.  This development was particularly beneficial in estimating the variance in predictions generated via supervised learning ensembles that are relatively computationally expensive.  A number of recent works, for example, have successfully applied this type of estimator in the context of variance estimation for supervised learning ensembles like random forests \cite{WagerIJ,wager2018estimation,zhou2019asymptotic}.

Though its final form is algebraically simple, the derivation of Efron's variance estimator is fairly involved and may appear somewhat mysterious to many readers.  Its development comes from an application of the original theory for the infinitesimal jackknife involving functional derivatives.  Likely as a result, studies rigorously investigating the statistical properties of this estimator as well as the contexts in which such an estimator would be appropriate are lacking in the current literature.  Efron, for example, notes that the appropriateness of his nonparametric delta method (infinitesimal jackknife) approach follows from the fact that the bagged estimates represent a more smooth function of the data.  Thus, while clearly an extremely significant result in and of itself, these estimates would not appear to apply to more general resampling schemes wherein such smoothness assumptions may not be reasonable.  

In this paper, we strive to take a step forward both in better understanding the intuition behind this important estimator as well as in understanding its statistical properties.  In addition to the infinitesimal jackknife derivation utilized by Efron, we consider two alternative approaches that are a bit more straightforward and easily motivated.  The first of these exploits the important fact that conditional on the observed data, the bagged estimate in \cref{eqn:bagest} depends only on the resampling weights.  We consider a linear approximation to this function of bootstrap weights (i.e.\ standard linear regression) and demonstrate that this approach exactly reproduces the infinitesimal jackknife results given in \cite{Efron2014} whenever all bootstrap samples are employed.  As an additional benefit, this setup motivates a more general procedure for estimating the variance of any resampled estimate, not just one based on the bootstrap. 

In addition to the linear regression and infinitesimal jackknife approaches, we also consider a classical jackknife motivation and once again demonstrate its equivalence in the full bootstrap context.  Importantly, this alternative representation of the estimator allows us to explore its asymptotic properties and in particular, the bias.  While the variance estimators motivated by the jackknife, infinitesimal jackknife, and linear regression approaches are shown to be identical when all bootstrap samples are used, they differ in practical settings when only a randomly selected subsample are employed, suggesting different bias corrections that might be imposed. 

Finally, we derive the form of the infinitesimal jackknife estimate of variance in the U-statistic regime where the resampling is instead done by subsampling without replacement.  We discover that the variance estimators commonly employed in practice for quantifying the predictive uncertainty in supervised learning ensembles are actually something of a ``pseudo" infinitesimal jackknife in that they differ from the ``correct" form when properly derived. 
While this difference is minor when the subsample size is small, in practical modern settings where large subsamples (possibly growing with $n$) are typically employed, such deviations become increasingly significant. We then investigate the properties of the pseudo infinitesimal jackknife and provide a consistent estimate of the variance of (generalized )U-statistics. 

The remainder this paper proceeds as follows. In Section 2, we provide the background and historical motivation for the infinitesimal jackknife (IJ) method. In Section 3, in addition to Efron's recent derivation in \cite{Efron2014}, we provide three alternative approaches for obtaining variance estimates for any bootstrap smoothed statistic including via both the classical jackknife as well as straightforward linear regression.  We discuss the bias of the estimators here and demonstrate their equivalence when all bootstrap samples are employed. Finally, in Section 4, we derive the IJ estimator for the variance of (generalized) U-statistics and discuss its consistency.  To our knowledge, these results provide the first formal guarantees for the validity of confidence intervals constructed on such estimators in settings where all subsamples are not employed, and therefore apply directly to the kinds of modern supervised learning ensembles like random forests commonly employed in practice.

%%%%%%%%%%%%%%%%%%%%%%%%%%%%%%%%%%%%%%%%%%%%%%%%%%
% 2.  Background
%%%%%%%%%%%%%%%%%%%%%%%%%%%%%%%%%%%%%%%%%%%%%%%%%%

\section{Background of the Infinitesimal Jackknife (IJ)}
\label{sec:background}

Let $\D_n$ denote a sample of observed values from real-valued random variables $X_1,\dots, X_n$ that are i.i.d. from a distribution $\mathrm{P}$. In practice, we are often interested in estimating statistical functionals -- functions of the underlying distribution $\mathrm{P}$,  often estimated via the empirical distribution $\P_n$.  Denote this statistic as $s(X_1,\dots, X_n) = f(\P_n)$ and assume that $s$ is permutation symmetric in these $n$ arguments.  These ``functions of functions" were first introduced by  \cite{volterra1887} and today are a familiar topic of advanced analysis. Any statistic that treats the samples equivalently can also be viewed as a function of $\P_n$, albeit without always necessarily having an explicit form of $f$.  We can further extend the domain of $f$ to any non-negative functions on $X_1,\dots, X_n$ by defining 
\begin{equation}
	f(\P) = f(c\cdot \P), \quad \text{for any } c>0.
\end{equation}

A common task, especially in today's big data era, is to find an appropriate and feasible means of estimating the variance of $f(\P_n)$.  Historically, there have been three primary methods:  the infinitesimal jackknife \cite{Miller1974}, influence curves \cite{hampel1974influence,huber1972}, and the delta method \cite{efron1982jackknife}. Though each method was motivated differently,  \cite{efron1981Nonparametric}  pointed out that the three methods are identical. We thus refer to the common estimator as $\mathrm{IJ}$, which is defined as 
\begin{equation}
	\label{eqn:IJdef}
	\mathrm{IJ} =\frac{1}{n^2} \sum_i\rd_i^2,
\end{equation}
where
\begin{equation}
	\label{di}
	\rd_i = \lim_{\epsilon \to 0} \frac{f((1-\epsilon)\P_n+\epsilon \delta_{X_i}) - f(\P_n)}{\epsilon}
\end{equation}
and $\delta_{x}$ is the Dirac delta function.

\vspace{3mm}

We now briefly review the original derivation of the IJ, following closely to the original constructions given by von Mises in 1947 \cite{mises1947} and Jaeckel in 1972 \cite{Jaeckel1972}.  Let $\mathcal{P}$ be the set of all linear combinations of $\mathrm{P}$ and an arbitrary finite number of the $\delta_x$ measures. Let $\mathcal{P}^+$ be the the set of positive measures in $\mathcal{P}$, not including the zero measures and assume $f$ is defined for the probability measures in $\mathcal{P}^+$.  As above, extend $f$ to all of $\mathcal{P}^+$ by letting $f(c\cdot \rp) =f(\rp)$ for all $c>0$. Note that $\mathcal{P}^+$ is convex and includes $\P_n$. 

We now formally define the derivative of $f$. We say $f$ is differentiable at $\rg$ in $\mathcal{P}^+$ if there exists a function $f'(\rg,x)$, defined at all $x$ in $\R$, with the following property: 

\begin{definition}[ \cite{Jaeckel1972}]
	Let $\rh$ be any member of $\mathcal{P}$ such that $\rg+t\rh$ is in $\mathcal{P}^+$ for all $t$ in some interval $0\leq t\leq t_{\rh}$, $t_{\rh}>0$, so that $f(\rg+t\rh)$ is defined for $t$ in this interval. Then for any such $\rh$, $f'(\rg,x)$ satisfies 
	\begin{equation}
		\begin{aligned}
			\frac{df(\rg+t\rh)}{dt}|_{t=0} &  :=  \lim_{t\to 0}\frac{f(\rg+t\rh)-f(\rg)}{t} \\
			& = \int f'(\rg,x)\,d\rh(x).
		\end{aligned}
	\end{equation}
\end{definition}

\vspace{4mm}

If $\rh=\rg$, we see that   $\int f'(\rg,x)\,d\rg(x) =0$ since $f(c\rg)=f(\rg)$. On the other hand, if $\rh=\delta_{x}- \rg$, we find
\begin{equation}
	\label{ic}
	\lim_{t\to 0}\frac{f((1-t)\rg+t\delta_{x})-f(\rg)}{t} = \int f'(\rg,x)\, d(\delta_x-\rg)(x) = f'(\rg,x).
\end{equation}
Indeed, \cite{hampel1974influence} defined $f'(\rg, x)$ by  \cref{ic} and has called it the ``influence curve", since it reflects the influence of $f$ by adding a small mass on $\rg$ at $x$.  Additionally, the derivative of $f(\rg+t\rh)$ at arbitrary $t_0$ with $0<t_0<t_\rh$ is given by
\begin{equation}
	\label{seg}
	\frac{df(\rg + t\rh)}{dt}|_{t= t_0} = \frac{df(\rg+t_0\rh + u\rh)}{du}|_{u=0} =\int f'(\rg+t_0\rh,x)\,d\rh(x).
\end{equation}

Now assume that $f$ is differentiable (in the sense defined above) at all $\rg$ in some convex neighbor of $\rp$ in $\mathcal{P}^+$ such that $\P_n$ lies in the neighborhood with probability approaching one.  We now describe the motivation for addressing when the $\mathrm{IJ}$ estimator might provide a sensible estimate of the variance of $f(\P_n)$.  Parameterizing the segment from $\rp$ to $\P_n$ by $\rp(t) = \rp+ t(\P_n-\rp)$ for $0\leq t\leq 1$, we have that if $\P_n$ lies in the neighborhood of $\rp$, then
\begin{equation}
	\begin{aligned}
		f(\P_n) - f(\rp) & =  f(\rp(1)) - f(\rp(0)) \\
		& = \frac{d f(\rp(c))}{dt}|_{t=c} \quad \text{(by the Mean Value Theorem)}\\
		& = \int f'(\rp(c))\, d(\P_n - \rp) \quad \text{(by \cref{seg})}
	\end{aligned}
\end{equation}
for some $c$ in $[0,1]$.
Now, for large $n$, $\P_n $ is near $\rp$ and we would expect that $f'(\rp(c))$ is close to $f'(\rp)$, and thus 
\begin{equation}
	\label{app-1}
	\begin{aligned}
		f(\P_n) - f(\rp) & = \int f'(\rp,x)\, d(\P_n-\rp)(x) + o_p\left(\frac{1}{n}\right) \\
		& = \frac{1}{n}\sum_i f'(\rp,X_i)  + o_p\left(\frac{1}{n}\right)
	\end{aligned}
\end{equation}
where the last equality is due to the fact that $\int f'(\rp, x)\, d\rp(x)=0$.
Since the first term on the right-hand side is a sum of i.i.d. random variables, $\sqrt{n}(f(\P_n)-f(\rp))$ is  asymptotic normal with mean $0$ and variance $\mathrm{V} = \int [f'(\rp,x)]^2\,d\rp(x)$. Since $\rp$ is unknown and $f'(\rp,x)$ depends on both $f$ and $\rp$, we generally will not know $f'(\rp, x)$ in advance, and so we could estimate $\rv$ by 
\begin{equation}
	\label{app-2}
	\begin{aligned}
		\rv = \int [f'(\rp, x)]^2\, d\rp(x) &\approx \int f^{'2}(\P_n,x)\,d\P_n(x) \\
		& = \frac{1}{n}\sum_i  [f'(\P_n, X_i)]^2. 
	\end{aligned}
\end{equation}
Then $\V(f(\P_n))$ can be estimated by $\frac{1}{n^2}\sum  [f'(\P_n, X_i)]^2$, which corresponds to the definition of the IJ estimator given in \cref{eqn:IJdef} with $\rd_i = f(\P_n, X_i)$.

In summary, to obtain the final estimate of the variance of $f(\P_n)$, we need to introduce two steps of approximation. First, in \cref{app-1}, we approximate $f(\P_n)$ with a linear statistic at $\P_n$.  Then, in \cref{app-2}, we approximate $\rp$ with $\P_n$ and $f'(\rp)$ with $f'(\P_n)$.  Thus, in evaluating the quality of the IJ estimator, we must determine (i) whether $f$ is close to a linear statistic and (ii) whether $f'(\P_n)$ is close to $f'(\rp)$.  The following sections provide in-depth investigations into these issues for popular types of statistics formed by resampling.

\section{Infinitesimal Jackknife for Bootstrap ($\ij_\rb$)}
\label{sec: bootstrap}

We now focus on a more particular setting -- bootstrapping -- where the infinitesimal jackknife has seen the most success as a method for estimating variance.  Suppose that $s(X_1,..., X_n)$ is statistic, not necessarily a function of $\P_n$, and we take all possible bootstrap samples $(X_1^*,..., X_n^*)$, plug each into $s$ to  obtain a corresponding bootstrap estimate $s^*$, and then take the average. We call the new statistic the bootstrap smoothed (bagged) alternative of $s$ and denote it as $\E_*[s^*]$, where $\E_*[\cdot]$ denotes the expectation taken over the bootstrap sampling procedure conditional on the data. Note that $\E_*[s^*]$ is now a function of $\P_n$. 
%	If $s(X_1,\dots, X_n)=\tilde{f}(\P_n)$ is itself a function $\P_n$, for a specific realization of $X_1^*,\dots, X_n^*$, $s^*$ is no longer a function $\P_n$, but a function of $\P_n^*$ - the empirical distribution induced by $X_1^*,\dots, X_n^*$. Nonetheless, 
%	
%	$\E_*[s^*]$ must be $ f(\P_n)$ for some $f$. 

The dependence of $f$ on $\P_n$ can be explicitly expressed as
\begin{equation}
	f(\P_n) = \int s\, d\P_n\times \cdots \times \P_n = \int s\,d(\P_n)^n
\end{equation}
and so we see that $f$ depends on $(\P_n)^n$ and the dependence roughly exponentially grows with $n$.  According to the Kolmogorov theorem, $\sqrt{n}\sup_x|\P_n(x)-\rp(x)|\to \sup_{t}\rb(\rp(t))$ in distribution, where $\rb(t)$ is the Brownian bridge. Therefore, the distance between $\P_n$ and $\rp$ is on the order of $1/\sqrt{n}$.  Note however that the distance between $(\P_n)^n$ and $(\rp)^n$ is only $\mathcal{O}(1)$ and therefore the distance between $f'(\P_n)$ and $f'(\rp)$ could be large if $f'(\P_n)$ depends on $\P_n$ exponentially.

%%%%%%%%%%%%%%%%%%%%%%
% The Three Approaches
%%%%%%%%%%%%%%%%%%%%%%	
\subsection{Three Approaches for Variance Estimation}
We turn now to the question of how to derive an estimator for $\V(\E_*[s^*])$.  In the following subsections, we lay out three different approaches, including the Efron's infinitesimal jackknife formulation and ultimately demonstrate that all three are equivalent when all bootstrap samples are utilized. \\

%%%%%%%%%%%%%%%%%%%%%%
% #1:  IJ
%%%%%%%%%%%%%%%%%%%%%%

\noindent \textbf{Method 1:  The Infinitesimal Jackknife Approach:}
Since $\E_*[s^*]$ can be viewed as a function of $\P_n$, estimating $\V(\E_*[s^*])= \V(f(\P_n))$ is a standard problem for the infinitesimal jackknife method and we denote the estimator $\mathrm{IJ_B}$. From the definition of $\E_*[s^*]$, we have 
\begin{equation*}
	\begin{aligned}
		f((1-\epsilon)\P_n+\epsilon\delta_{X_i}) & = n^{-n} \sum\frac{s(X_1^*,\dots, X_n^*)n!}{(w_{1}^*!)(w_{2}^*!)\dots (w_{n}^*!)}\left[(1-\epsilon)^{\sum_{k\neq i} w_{k}^*} (1+(n-1)\epsilon)^{w_{i}^*}\right] \\
		&  =  n^{-n} \sum \frac{s(X_1^*,\dots, X_n^*)n!}{(w_{1}^*!)(w_{2}^*!)\dots (w_{n}^*!)}\left[1+n\epsilon(w_{i}^*-1)\right] + o(\epsilon^2) \\
		& = f(\P_n) + \epsilon n\cov_*(s^*,w_i^*)  +  o(\epsilon^2),
	\end{aligned}
\end{equation*}
where $w_i^* = \#\{j:X_j^*=X_i\}$. By \cref{di}, $\rd_i = n \cov_*(s^*, w_i^*))= n\cov_i$ and thus
\begin{equation}
	\label{IJ-bootstrap}
	\mathrm{IJ_B} =\sum_i \cov_i^2 = \sum_i \cov^2_*(s^*, w_i^*).
\end{equation}

The estimator in \cref{IJ-bootstrap} was first derived in 2014 by Efron \cite{Efron2014} as a straightforward application of the classical infinitesimal jackknife method.  This work, however, did not discuss how well the estimator might be expected to perform in various settings.  In fact, even going back to the original classical infinitesimal jackknife results, there has been little to no general theory for addressing such questions.  One possible explanation for the lack of answers in this area is that the process itself is quite abstract; it involves taking functional derivatives, which obfuscates the connection to classical probability theory.  Thus, we now provide an alternative derivation of the same estimator that avoids the need for functional derivatives. In particular, we formulate an explicit expression of $ f'(\rp,x)$ and a different interpretation of $\cov_*(s^*,w_i^*)$. \\

\noindent \textbf{Method 1a:  An Alternative Infinitesimal Jackknife Derivation:}  First, note that $ t= \E_*[s^*]$ is a symmetric function of $X_1,\dots, X_n$ and therefore there exists an H-decomposition \cite{hoeffding1948central} of $t$. 
To set up this H-decomposition, we first need to introduce following notation for kernels $t^{1}, \dots, t^{n}$ of degrees $1,\dots, n$. These kernels are defined recursively as follows
\begin{equation}
	t^{1}(x_1) = t_1(x_1)
\end{equation}
and 
\begin{equation}
	t^c(x_1,\dots, x_c) = t_c(x_1,x_2,...,x_c) - \sum_{j=1}^c \sum_{i_1,\dots, i_j \in \{1,\dots, c\}} t^j(x_{i_1},\dots, x_{i_j})
\end{equation}
where $t_c(x_1,\dots, x_c) = \E[t(x_1,\dots, x_c, X_{c+1},\dots, X_n)] -\E[t]$. Then we can write $t= \E[t] + \sum_j \sum_{i_1,\dots, i_j}t^j (X_{i_1}, \dots X_{i_j})$.
Now consider using the linear term $l_b = \sum_i t^1(X_i)$ as an approximation of $t - \E[t]$.  Then we would expect 
\begin{equation}
	\label{ap-1}
	\begin{aligned}
		t  - \E[t] =   l_b + o_p\left(\frac{1}{n}\right)
	\end{aligned}
\end{equation} 
and naturally, we could use $\V(l_b)$ as an estimate of $\V(\E_*[s^*])$. Note however that $\V(l_b) = n \int (t^1)^2\,d\rp$ and both $\rp$ and $t_1$ are unknown.  First, $\P_n$ is a good candidate for estimating $\rp$, and we have 
\begin{equation}
	\int (t^1)^2\,d\rp \approx \int (t^1)^2\,d\P_n= \frac{1}{n}\sum_{i=1}^n (t^1(X_i))^2.
\end{equation}
Second, as for $t^1(X_1) = \E[t|X_1] - \E[t]$, $\E[\cdot]$ is again unknown, but we could substitute it with  $\E_*[\cdot]$ to obtain
\begin{equation}
	\label{ap-2}
	\begin{aligned}
		t^1(X_1) &  = \E[t(X_1,\dots, X_n)| X_1] - \E[t] \\
		& \approx  \E_*[t(X_1,X_2^*,\dots, X_n^*)] - \E_*[t] \\
		& =   \E_*[s^*(X_1,X_2^*, \dots, X_n^*)] - \E_*[s^*] \\
		& = e_1 - s_0
	\end{aligned}
\end{equation}
where $e_1=\E_*[s^*(X_1,X_2^*,\dots, X_n^*)]$ and $s_0 = \sum e_i/n$. The second equality in \cref{ap-2} holds because $t = \E_*[s^*]$ is already a bagged estimator.
Putting all of the above approximations together, we have
\begin{equation}
	\widehat{\V(l_b)} = \sum_i (e_i-s_0)^2.
\end{equation}

It's worth pausing here to emphasize how the classical ideas behind the infinitesimal jackknife discussed in the previous sections (\cref{app-1} and \cref{app-2}) coincide with the idea of a linear approximation via the H-Decomposition.  In \cref{app-1}, we approximate $t = f(\P) $ by $f(\rp) + \int  f'(\rp,x)\,d\P_n$ whereas in \cref{ap-1}, we approximate $t$ by $\E[t] + \sum_i t^1(X_i)$.  In \cref{app-2}, we approximate $f'(\rp,X_i)$ with $f'(\P_n, X_i)$; in \cref{ap-2} we approximate $t^1(X_i) = \E[t|X_1 = X_i] -\E[t]$ with $\E_*[t|X_1^* = X_i]- \E_*[t] = e_i-s_0$.  It follows that $f'(\rp, x) = nt^1(x)$ and $f'(\P_n, X_i) = n(e_i-s_0)$.  Finally, this alternative derivation also suggests that $\mathrm{IJ_B}$ may only be appropriate for estimating $\V(\E_*[s^*])$ in limited cases when the approximations in \cref{ap-1} and \cref{ap-2} are reasonable. \\

%%%%%%%%%%%%%%%%%%%%%%
% Approach 2:  The Jackknife
%%%%%%%%%%%%%%%%%%%%%%
\noindent \textbf{Method 2:  The Jackknife Approach:}  We now propose an estimator for $\V(\E_*[s^*])$ based on the classical jackknife procedure. Recall that delete-1 jackknife samples are selected by taking the original data vector and deleting one observation from the set. Thus, there are $n$ unique jackknife samples, the $i^{th}$ of which is given by $\D_n[i] = (X_1,\dots, X_{i-1},X_{i+1},\dots, X_n)$. The $i^{th}$ jackknife replicate is then defined as the value of the estimator evaluated on the $i^{th}$ jackknife sample. Letting $t$ denote $\E_*[s^*]$, the jackknife estimate of variance is then defined as
\begin{equation}
	\begin{aligned}
		\mathrm{JK} & = \frac{n-1}{n} \sum (t(\D_n[i])-\bar{t})^2\\
		& \approx n\V_*\left(\sum_i t(\D_n[i])\cdot \one_{\{X_i\text{ is deleted}\}}\right).
	\end{aligned}
\end{equation}
Here, the key idea is that we expect $t(\D_n[i])$ be closed to $t(\D_n)$ and thus use the sample variance $t(\D_n[i])$ to estimate the variance of $t(\D_n)$.  Since those $t(\D_n[1]),\dots, t(\D_n[n])$ are strongly correlated, we scale the sample variance by $n$. Note that $t=\int s(x_1,\dots, x_n)\,d\P_n(x_1)\times \cdots \times \P_n(x_n)$.  Now suppose that we consider fixing the $i^{th}$ position rather than deleting the $i^{th}$ sample.  We would thus replace $t(\D_n[i])$ by 
\begin{equation*}
	\begin{aligned}
		t_{(i,j)} & =\int s(x_1, x_2,\dots,x_{i-1}, X_j, x_{i+1},\dots, x_n)\,d\P_n(x_1)\cdots\P_n(x_{i-1})\times\P_n(x_{i+1})\cdots \P_n(x_n) \\
	\end{aligned}
\end{equation*}
for $ 1\leq i,j\leq n$ and propose 
\begin{equation}
	\label{var}
	\begin{aligned}
		\mathrm{JK_B}
		& = n \V_*\left(\sum_j t_{(i,j)}\one_{\{X_i^*=X_j\}}\right)  = \sum_j (e_j-s_0)^2\\
	\end{aligned}
\end{equation}
as an estimate of the variance of $\E_*[s^*]$. The third equality in \cref{var} is simply due to the fact that  $t_{(i,j)} = e_j$.\\

%%%%%%%%%%%%%%%%%%%%%%
% Approach 3:  Linear Regression
%%%%%%%%%%%%%%%%%%%%%%
\noindent \textbf{Method 3:  The OLS Linear Regression Approach:} Recall that in a standard bootstrap procedure, $B$ resamples of size $n$ are sampled uniformly at random (with replacement) from the rows of $\D_n$.  Each observation in the original dataset receives equal weight and so we can write that weight vector as $\bw^* \sim \text{Multinomial}(\frac{1}{n}, \dots, \frac{1}{n})$.  Now realize that conditional on the original data $\D_n$, each bootstrap estimate $s^*$ is a function of only those (empirical) weights $(w_1^*,\dots, w_n^*)$ corresponding to the observations actually selected in the bootstrap sample.  Consider the linear space spanned by $\bw^*=(w_1^*,\dots, w_n^*)$ and denote the $l^*$ as the projection of $s(\bw^*)$ onto that linear space.  We can use $\V_*(l^*)$ as an estimate of $\V(\E_*[s^*])$ which corresponds exactly to the setup where the relationship between the bootstrap estimates and observation weights is estimated via ordinary least squares linear regression.   

\begin{remark}
	In the sections that follow, we rely more heavily on Methods 1 and 2 for assessing the properties of the infinitesimal jackknife variance estimator.  Nonetheless, the straightforward OLS approach in Method 3 reveals helpful and important insight into how the infinitesimal jackknife can be viewed.  A much more explicit walkthrough of how the familiar linear regression setup can be used to derive the infinitesimal jackknife estimator is provided in \cref{app:OLSconnection}.
\end{remark}

%%%%%%%%%%%%%%%%%%%%%%
% The equivalence of the three approaches
%%%%%%%%%%%%%%%%%%%%%%
\subsection{Comparing the Three Approaches}	
We now examine how the three estimators derived above compare to each other.  The first result below gives that the three estimators are identical whenever all bootstrap samples are used.
\begin{theorem}
	\label{3rivers}
	Suppose that we have data $\D_n$ and a statistic $s$. Let $(X_1^*, \dots, X_n^*)$ be a general bootstrap sample of $\D_n$, $s^*=s(X_1^*,\dots, X_n^*)$ and $w_j^*=\#\{i:X_i^*=X_j\}$, then
	\begin{enumerate}
		\item $\E_*[s^*w_j^*] = e_j$;
		\item $l^* =    \sum_j w_j^*\beta_j$, where $\beta_j = (e_j-s_0)$;
		\item 	$\V_*(l^*) = \mathrm{JK_B}  = \mathrm{IJ_B}$.
	\end{enumerate}
	where $e_j=\E_*[s^*|X_1^*=X_j]$ and $s_0=\E_*[s^*]$.
\end{theorem}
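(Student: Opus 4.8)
The plan is to reduce all three parts to two elementary ingredients: the permutation symmetry of $s$, and the first two bootstrap moments of the weight vector $\bw^* \sim \text{Multinomial}(n;\tfrac1n,\dots,\tfrac1n)$, namely $\E_*[w_j^*]=1$, $\var_*(w_j^*)=\tfrac{n-1}{n}$, and $\cov_*(w_j^*,w_k^*)=-\tfrac1n$ for $j\neq k$.

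For part 1, I would write $w_j^*=\sum_{i=1}^n \one_{\{X_i^*=X_j\}}$ and expand $\E_*[s^*w_j^*]=\sum_{i=1}^n \P_*(X_i^*=X_j)\,\E_*[s^*\mid X_i^*=X_j]$. Since the bootstrap coordinates are i.i.d.\ uniform on $\D_n$, each $\P_*(X_i^*=X_j)=\tfrac1n$; and because $s$ is symmetric while the remaining $n-1$ coordinates are exchangeable, $\E_*[s^*\mid X_i^*=X_j]$ is the same for every $i$ and equals $e_j$. Summing the $n$ identical terms gives $\E_*[s^*w_j^*]=e_j$. Two consequences follow immediately and feed into the later parts: first, $\cov_*(s^*,w_j^*)=\E_*[s^*w_j^*]-\E_*[s^*]\E_*[w_j^*]=e_j-s_0$; and second, by the law of total expectation $s_0=\tfrac1n\sum_j e_j$, so $\sum_j(e_j-s_0)=0$.

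For part 2, the object $l^*$ is the OLS projection of $s^*$ onto $\mathrm{span}\{w_1^*,\dots,w_n^*\}$, so I would verify that $\beta_j=e_j-s_0$ satisfies the normal equations $\E_*[(s^*-\sum_k w_k^*\beta_k)\,w_j^*]=0$. Using part 1 together with the second moments $\E_*[(w_j^*)^2]=\tfrac{2n-1}{n}$ and $\E_*[w_j^*w_k^*]=\tfrac{n-1}{n}$, and invoking $\sum_j\beta_j=0$, one checks that $\sum_k\beta_k\,\E_*[w_j^*w_k^*]=e_j-s_0$. The delicate point here is collinearity: because $\sum_j w_j^*=n$ is deterministic, the constant function lies in the span and the coefficient vector is identified only up to this degeneracy. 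Consequently the computation above matches the normal equations exactly for the centered response $s^*-s_0$, so $\sum_j w_j^*\beta_j$ is the projection of $s^*-s_0$; this is the sense in which $l^*=\sum_j w_j^*\beta_j$, the additive constant being immaterial to what follows.

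For part 3, I would compute $\V_*(l^*)=\V_*(\sum_j \beta_j w_j^*)=\sum_{j,k}\beta_j\beta_k\,\cov_*(w_j^*,w_k^*)$ and use the multinomial covariances together with $\sum_j\beta_j=0$ to collapse the double sum to $\sum_j\beta_j^2=\sum_j(e_j-s_0)^2$. On the other side, $\mathrm{JK_B}=\sum_j(e_j-s_0)^2$ is exactly \cref{var}, while the covariance identity from part 1 gives $\mathrm{IJ_B}=\sum_j\cov_*^2(s^*,w_j^*)=\sum_j(e_j-s_0)^2$; hence all three agree. The main obstacle is not any single computation but the bookkeeping around the constraint $\sum_j w_j^*=n$: it simultaneously forces the intercept into the span (the subtlety in part 2) and, via $\sum_j\beta_j=0$, is exactly what makes the variance in part 3 telescope to $\sum_j\beta_j^2$. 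Everything else is direct evaluation of multinomial moments.
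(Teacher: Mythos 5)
Your proof is correct, and it reaches the same three identities as the paper, but the route through part 2 is genuinely different. The paper's proof gets $l^*=\sum_j w_j^*(e_j-s_0)$ by identifying the span of the weights with the space of additive functions of the bootstrap coordinates and invoking the H\'ajek-type projection formula $l^*=\sum_i\left(\E_*[s^*\mid X_i^*]-\E_*[s^*]\right)$, then regrouping the indicators $\one_{\{X_i^*=X_j\}}$ into the $w_j^*$; it never writes down multinomial second moments. You instead verify the normal equations directly inside the finite-dimensional span, using $\E_*[(w_j^*)^2]=\tfrac{2n-1}{n}$, $\E_*[w_j^*w_k^*]=\tfrac{n-1}{n}$, and $\sum_j\beta_j=0$. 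What your route buys is twofold: it is self-contained (no appeal to the projection-onto-independent-coordinates identity, whose applicability here rests on a symmetry argument the paper leaves implicit), and it makes explicit the collinearity caused by $\sum_j w_j^*=n$ --- namely that $\sum_j w_j^*\beta_j$ is exactly the projection of the \emph{centered} response $s^*-s_0$, so the identity in part 2 holds up to an additive constant that cancels in the variance. The paper's version quietly builds this same centering into its projection formula (it subtracts $\E_*[s^*]$ inside the sum), so the two treatments agree; yours just surfaces the subtlety. For parts 1 and 3 the arguments are essentially the same computation in different clothing: your indicator decomposition $w_j^*=\sum_i\one_{\{X_i^*=X_j\}}$ plus exchangeability replaces the paper's direct absorption of $w_j^*$ into the multinomial coefficient, and in part 3 you fill in the covariance calculation $\V_*\bigl(\sum_j\beta_j w_j^*\bigr)=\sum_j\beta_j^2-\tfrac{1}{n}\bigl(\sum_j\beta_j\bigr)^2=\sum_j\beta_j^2$ that the paper asserts without detail, while taking $\mathrm{JK_B}=\sum_j(e_j-s_0)^2$ from \cref{var} exactly as the paper does.
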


The proof of \cref{3rivers} can be found in \cref{app:bootstrap}. 

Now consider the more practical setting in which we draw only $B$ bootstrap samples $(X^*_{b1},\dots, X^*_{bn})$ and calculate $s_b^*= s(X_{b1}^*, \dots, X_{bn}^*)$ for each $b=1,\dots, B$.  Consider the bagged estimate $ \overline{s^*} = \frac{1}{B}\sum_{b=1}^B s_b^*$. By law of total variance, we have 
\begin{equation}
	\label{dec}
	\begin{aligned}
		\V(\overline{s^*}) & = \V(\E[\overline{s^*}|\D_n])  +  \E[\V(\overline{s^*}|\D_n)]  \\
		&= \V(\E_*[s^*]) + \frac{1}{B}\E[\V_*(s^*)].
	\end{aligned}
\end{equation}

\noindent The first term in \cref{dec} is dominant and so to provide a good estimate of $\V(\overline{s^*})$, we must provide a good estimate of $\V(\E_*[s^*])$. Since we do not employ all bootstrap samples, we cannot use $\ij_\rb$ directly but we can use the $B$ bootstrap samples to provide an estimate. First, a natural estimate of $\cov_j$ is simply $\widehat{\cov}_j$, the sample covariance of $(s_1^*, \dots, s_B^*)$ and $(w_{1j}^*, \dots, w_{Bj}^*)$.  Next, for $e_j-s_0$, we can first estimate $s_0$ with $\sum_{b=1}^B s_b^*/B$. Then, since $e_j$ the expected value of $s^*$ given $X_i^*=X_j$ for $i = 1,\dots, n$, a natural estimate is the weighted average of the mean of $s_b^*$ where $X_i^*=X_j$. Here, the weights are the proportion of times when $X_i^* = X_j$ across the $B$ bootstrap samples. A straightforward calculation gives that
\begin{equation}
	\widehat{e_j} = \sum_{b=1}^B\frac{w_{bj}^*}{\sum w_{bj}^*} s_b^*  \quad\text{and}\quad  \widehat{e_j}-\widehat{s_0} =  \sum_{b=1}^B
	\left(\frac{w^*_{bj} }{\sum w^*_{bj}} - \frac{1}{B}\right) s_b^*.
\end{equation}

Finally, the natural estimate of $\V_*(l^*)$ is given by $\widehat{\V}(\hat{l}) $, where $\hat{l}= (\hat{l}_1, \dots, \hat{l}_B)$ is the projection of $(s_1^*,\dots, s_B^*)$ onto the linear space spanned by $(w_{1j}^*, \dots, w_{Bj}^*)$ for $j=1,\dots, n$ and $\widehat{\V}(\hat{l}) = \frac{1}{B}\sum_{b=1}^B ({\hat{l}_b - \bar{\hat{l}}})^2$ is the sample variance of $\hat{l}$.

Putting all of this together, we have the following three finite sample (i.e.\ not all bootstrap samples employed) variance estimators corresponding to the infinitesimal jackknife, jackknife, and OLS linear regression methods: 

\begin{equation}
	\label{mc}
	\begin{aligned}
		\hat{\sigma}^{2}_{\text{IJ}} &:= \widehat{\mathrm{IJ}}_{\mathrm{B}} = \sum_{j=1}^n \widehat{\cov}_j^2 = \sum_{j=1}^n\left[ \frac{1}{B-1} \sum_{b=1}^B (s_b^*-\bar{s^*}) (w^*_{bj}-\bar{w^*_{j}})\right]^2 \\
		\hat{\sigma}^{2}_{\text{JK}} &:= \widehat{\mathrm{JK}}_\mathrm{B} =\sum_{j=1}^n (\widehat{e_j}-\widehat{s_0})^2 = \sum_{j=1}^n \left[\sum_{b=1}^B
		\left(\frac{w^*_{bj} }{\sum w^*_{bj}} - \frac{1}{B}\right) s_b^*\right]^2\\
		\hat{\sigma}^{2}_{\text{OLS}} &:= \widehat{\V}(\hat{l}) = \frac{1}{B}\sum_{b=1}^B ({\hat{l}_b - \bar{\hat{l}}})^2. \\
	\end{aligned}
\end{equation}

While these alternative derivations and estimators are helpful for providing insight into the infinitesimal jackknife in general, $\hat{\sigma}^{2}_{\text{IJ}} = \widehat{\mathrm{IJ}}_{\mathrm{B}}$ seems to be the estimator that has garnered the most practical interest.  In the following subsections, we focus on assessing its properties.

%%%%%%%%%%%%%%%%%%%%%%%%%%%%%%%%%%%%%%%%%%%%
% The Bias of the IJ Estimator
%%%%%%%%%%%%%%%%%%%%%%%%%%%%%%%%%%%%%%%%%%%%
\subsection{The Bias of $\widehat{\ij}_\rb$}
\label{sec:biasIJ}

We now begin to analyze the properties of $\hij_{\mathrm{B}}$ as an estimator.  As discussed above in reference to \cref{dec}, in order to understand how well $\hij_{\mathrm{B}}$ estimates $\V(\overline{s^*})$, we need only understand how well it estimates $\V(\E_*[s^*])$.  Here we consider both the Monte Carlo bias and sampling bias of $\widehat{\ij}_{\mathrm{B}}$ where the sampling bias is considered with respect to variation in the data, whereas the Monte Carlo bias is considered with respect to bootstrap process conditional on the data. We combine these two sources of bias with
\begin{equation}
	\widehat{\mathrm{IJ}}_\rb - \V(\E_*[s^*])   \propto  \underbrace{\E_*[\widehat{\mathrm{IJ}}_\rb] -\mathrm{IJ}_\rb}_{\textbf{Monte Carlo Bias}} + \underbrace{ \E[\mathrm{IJ}_\rb] - \V(\E_*[s^*])}_{\textbf{Sampling Bias}}.
\end{equation}

\noindent and consider their impact separately in the following subsections. \\

%%%%%%%%%%%%%%%%%%%%%%
% Monte Carlo Bias
%%%%%%%%%%%%%%%%%%%%%%
\subsubsection{Monte Carlo Bias}
We first consider the Monte Carlo bias of  $\widehat{\mathrm{IJ}}_\rb$.  Note that $\E_*[\hij_\rb] -\ij_\rb = \sum_j \E_*[\widehat{\cov}_j^2] - \cov_j^2$ and $\E_*[\widehat{\cov}_j^2 ] - \cov_j^2 
= \E_*[\widehat{\cov}_j^2 ] - \E_*^2[\widehat{\cov}_j]   = \V_*(\widehat{\cov}_j)$. Next, we have 
\begin{equation*}
	\begin{aligned}
		\V_*(\widehat{\cov}_j) 
		&  =  -\frac{B-2}{B(B-1)}\cov^2_*(s^*,w_j^*)  + \frac{\V_*(s^*)\V_*(w_j^*)}{B(B-1)} +\\ & ~~~~\frac{\E_*[(s^*-\E_*[s^*])^2(w_j^*-\E_*[w_j^*])]^2}{B} \\
		& := \mathrm{I} + \mathrm{II}
	\end{aligned}
\end{equation*}
where 
\begin{equation}
	\begin{aligned}
		\mathrm{I} &= \frac{1}{B}\V_*\left((s^*-\E_*[s^*])(w_j^*-\E_*[w_j^*])\right) \\
		\mathrm{II} & = \frac{1}{B(B-1)}\left[\V_*(s^*)\V_*(w_j^*) + \cov_*^2(s^*,w_j^*)\right].
	\end{aligned}
\end{equation}
The first term $\mathrm{I}$ is the dominant term and is $\mathcal{O}(1/B)$. Essentially, we see here that using $\widehat{\cov}_j^2$ to estimate $\cov_j^2$ is analogous to using $\bar{X}^2$ to estimate $\E^2[X]$, which is biased since $\E[\bar{X}^2] - \E^2[X] = \V(X)/B$.  Indeed, $\V(X)/B$ may not be negligible, especially when $B$ is small and  the coefficient of variation of $X$ is large. The variance estimator defined below offers a bias correction for $\hij_\rb$.

\begin{definition}
	A Monte Carlo bias corrected version of $\hij_\rb$ is given by
	\begin{equation}
		\label{mc-ij}
		\hij_\rb^{mc} = \hij_\rb -  \frac{1}{B} \sum_j \widehat{\V}\left((s^*-\bar{s^*})(w_j^*-\bar{w_j^*})\right),
	\end{equation}
	where $\widehat{\V}$ denotes sample variance. 
\end{definition}

Note that the bias correction term above is a sum over $n$ terms. Then if $B$ is small, the bias correction term will be significant. 
\begin{remark}
	In recent work, \cite{WagerIJ} proposed the Monte Carlo bias corrected estimator
	\begin{equation}
		\label{mc-efron}
		\hij_\rb^{whe}= \hij_\rb-  \frac{n}{B}\widehat{\V}(s^*).
		%			\hij_\rb-  \frac{n}{B}\widehat{\V}(s^*).
	\end{equation}
	We see that if $\V_*((s^*-\E_*[s^*])(w_j^*-\E_*[w_j^*])$ is close to $\V_*(s^*-\E_*[s^*])\V_*(w_j^*-\E^*[w_j^*]) = (1-\frac{1}{n})\V_*(s^*-\E_*[s^*])$, then \cref{mc-ij} is close to \cref{mc-efron}.  Simulations comparing \cref{mc-ij} and  \cref{mc-efron} are provided in the next section. \\
\end{remark}

%%%%%%%%%%%%%%%%%%%%%%
% Sampling Bias
%%%%%%%%%%%%%%%%%%%%%%

\subsubsection{Sampling Bias}
Before looking more generally at the sampling bias of the infinitesimal jackknife, let's first examine how $\ij_\rb$ behaves on some simple examples.

\vspace{3mm}

\noindent \textbf{Example 1: Sample Mean}
Consider $s=s(x_1,\dots, x_n) = \frac{1}{n}\sum_{i=1}^n x_i$. We have 
\begin{equation}
	s^* = \frac{1}{n}\sum_{i=1}^n X_i^*, \quad l^* = \sum_{i=1}^n \sum_{j=1}^n 1_{X_i^*=X_j} (e_j-s_0).
\end{equation}
Then, $\E_*[s^*] = \frac{1}{n}\sum_{i=1}^n X_i$ and  $\V_*(l^*) =\frac{1}{n^2}\sum_{i=1}^n (X_i-\bar{X})^2$. Therefore, 
\begin{equation}
	\V(\E_*[s^*]) = \sigma^2/n, \quad \E[\V_*(l^*)] = (n-1)\sigma^2/ n^2
\end{equation}
and thus, we have  $ \frac{\E[\V_*(l^*)]}{\V(\E_*[s^*])} = \frac{n-1}{n}\to 1$ as $n\to \infty$. In  Figures 1 and 2, $X_1,\dots, X_n$ follow $\mathcal{N}(0, \sigma^2)$, $n=100$ and $\sigma^2 =1$. Since we know that $\V(\E_*[s^*])=\sigma^2/n$ , an oracle estimate would be $\widehat{\sigma^2}/n$, where $\widehat{\sigma^2}$ is the sample variance. The gray dashed line denotes the true value of $\V(\E_*[s^*])$.  We find that $\hij_\rb^{mc}$ and $\hij_\rb^{whe}$ are quite close as expected and both perform well. The original $\hij_\rb$ seems to overestimate substantially when $B=100$.

\begin{figure}[h]
	\centering
	\begin{minipage}{0.45\linewidth}
		\label{bpmean}
		\includegraphics[width=\textwidth,height=\textheight,keepaspectratio]{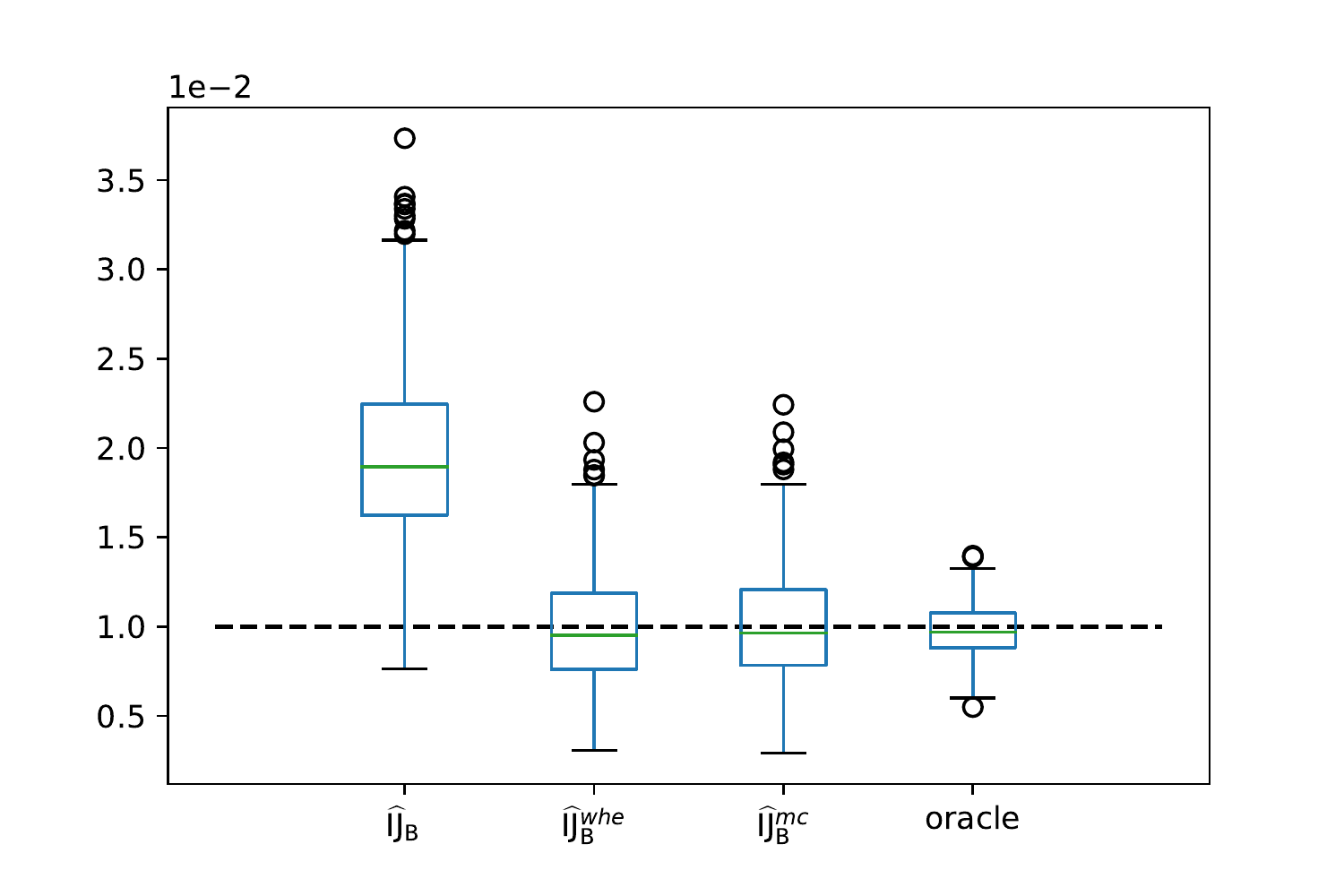}
		\caption{Performance of the infinitesimal jackknife and its bias-corrected alternatives on estimating the variance of the  bagged sample mean (B=100).}
	\end{minipage}
	\quad 
	\begin{minipage}{0.45\linewidth}
		\includegraphics[width=\textwidth,height=\textheight,keepaspectratio]{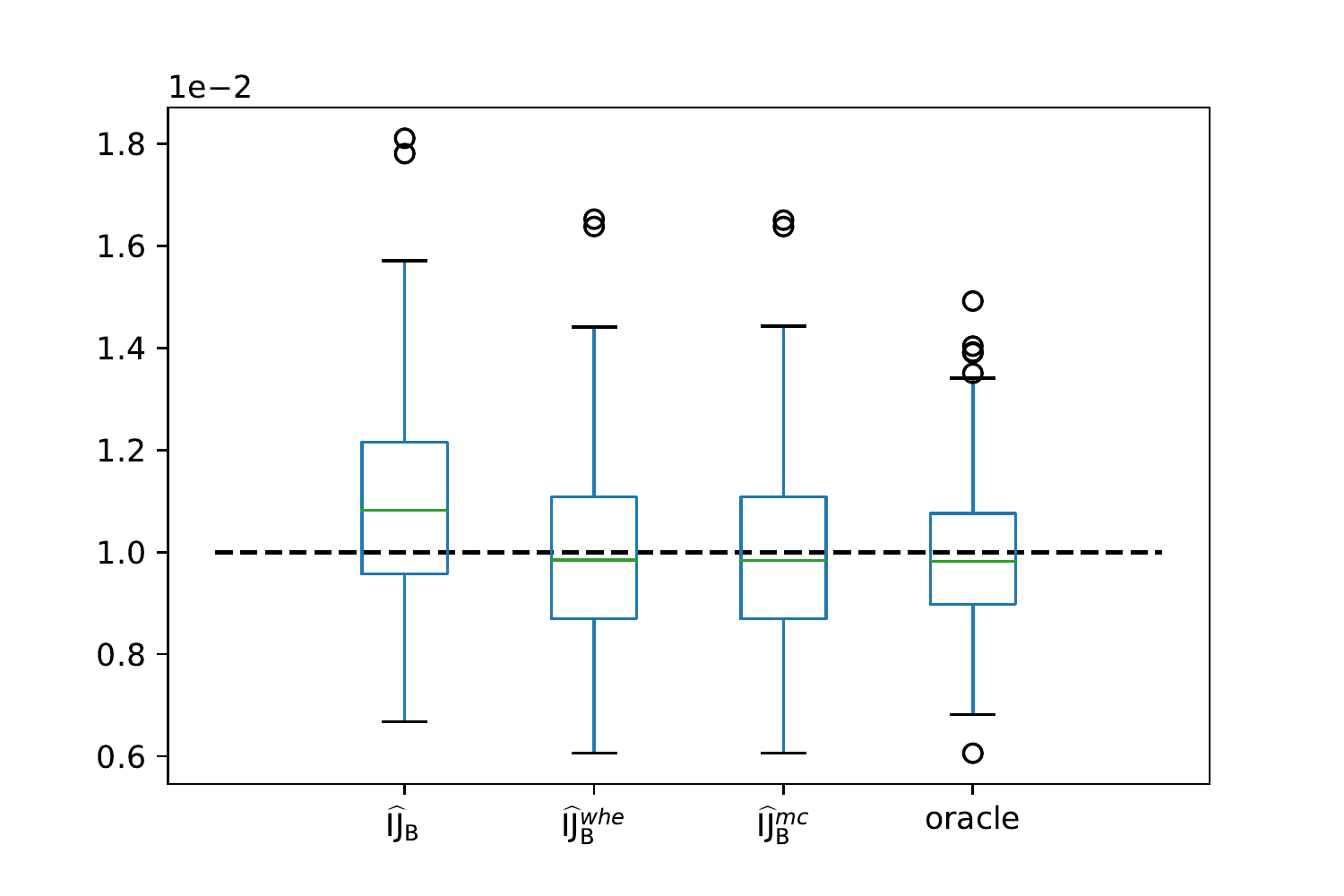}
		\caption{Performance of the infinitesimal jackknife and its bias-corrected alternatives on estimating the variance of the  bagged sample mean (B=1000).}
	\end{minipage}
\end{figure}

\vspace{3mm}
\noindent \textbf{Example 2: Sample Variance}
Consider  $s= {n \choose 2}^{-1}\sum \limits_{i<j} (x_i-x_j)^2$. We have 
\begin{equation*}
	\E_*[s^*] = \frac{1}{n}\sum_{i=1}^n(X_i-\bar{X})^2\quad \text{and}\quad \V_*(l^*) = \frac{1}{n^2} \sum_{i}\left[(X_i-\bar{X})^2- \frac{1}{n}\sum_{i}(X_i-\bar{X})^2\right]^2.
\end{equation*}
Then we have 
\begin{equation}
	\begin{aligned}
		\V(\E_*[s^*]) & =\left(\frac{n-1}{n}\right)^2\left[\frac{\mu_4}{n} - \frac{\mu_2^2}{n}\frac{n-3}{n-1}\right] \\
		& = a_n\mu_4 - b_n\mu_2^2,
	\end{aligned}
\end{equation}
where $\mu_i$ is the $i$th central moment of $X$ for $i=2,4$. Let $\bX= (X_1,\dots, X_n)^T$, then 
$\E[\V_*(l^*)] $ can be written as  $ \frac{1}{n} \E[\bX^T\A\bX]^2$,
where  $\A = \Sigma_1 - \frac{1}{n}\sum_i \Sigma_i$, $\Sigma_i = (e_i-\frac{1}{n}\bone_n)(e_i^T -\frac{1}{n}\bone_n^T)$ and $e_i=(0,\dots, 0,1,0,\dots, 0)$. After some calculation, we obtain
\begin{equation*}
	\begin{aligned}
		& ~~~~ \E[\V_*(l^*)]  \\
		&= \frac{(n-1)}{n^2}\left[\E[(X_1-\bar{X})^4] -\E[(X_1-\bar{X})^2(X_2-\bar{X})^2]\right] \\
		& = \left(\frac{n-1}{n}\right)^2 \left[ \left(\frac{n^3-(n-1)^2}{n^2(n-1)^2} + \frac{n}{(n-1)^5}\right)\mu_4 - \left(\frac{n^2-2n+3}{(n-1)n^2} - \frac{3n^2(2n-3)}{(n-1)^5}\right)\mu_2^2\right]\\
		& = a_n'\mu_4 - b_n' \mu_2^2.
	\end{aligned}
\end{equation*}
Thus, we have 
\begin{align}
	\nonumber
	\frac{a_n'}{a_n} &=   1 + \frac{n^2+n-1}{n(n-1)^2} + \frac{n^2}{(n-1)^5}  = 1 + \frac{1}{n} + o(\frac{1}{n}) \\
	\nonumber
	\frac{b_n'}{b_n} & = 1 + \frac{n+3}{n(n-3)}  -  \frac{3n^3(2n-3)}{(n-1)^4(n-3)} = 1- \frac{5}{n} + o(\frac{1}{n}).
\end{align}
Since $a_n'/a_n\to 1 $ and $b_n'/b_n\to 1$ as $n\to \infty$, we have $\frac{\E[\V_*(l^*)]}{\V(\E_*[s^*])} \to 1$.  $\ij_\rb$ is therefore asymptotically unbiased for estimating  the variance of the sample variance. Since the sample variance is close to a linear statistic, the result is not surprising. In Figures 3 and 4,  $X_1,\dots, X_n$ follow $\mathcal{N}(0, \sigma^2)$, $n=100$,  and $\sigma^2 =1$. Since we know $\V(\E_*[s^*]) = 2\sigma^4/n$, an oracle estimate would be $2(\widehat{\sigma^2})^2/n$, where $\widehat{\sigma^2}$ is  the sample variance. The gray dashed line denotes the true value of $\V(\E_*[s^*])$. As in the first example, $\hij_\rb^{mc}$ and $\hij_\rb^{whe}$ are both quite close and perform well. The original $\hij_\rb$ again seems to suffer from overestimation when $B=100$.

\begin{figure}[h]
	\centering
	\begin{minipage}{0.45\linewidth}
		\includegraphics[width=\textwidth,height=\textheight,keepaspectratio]{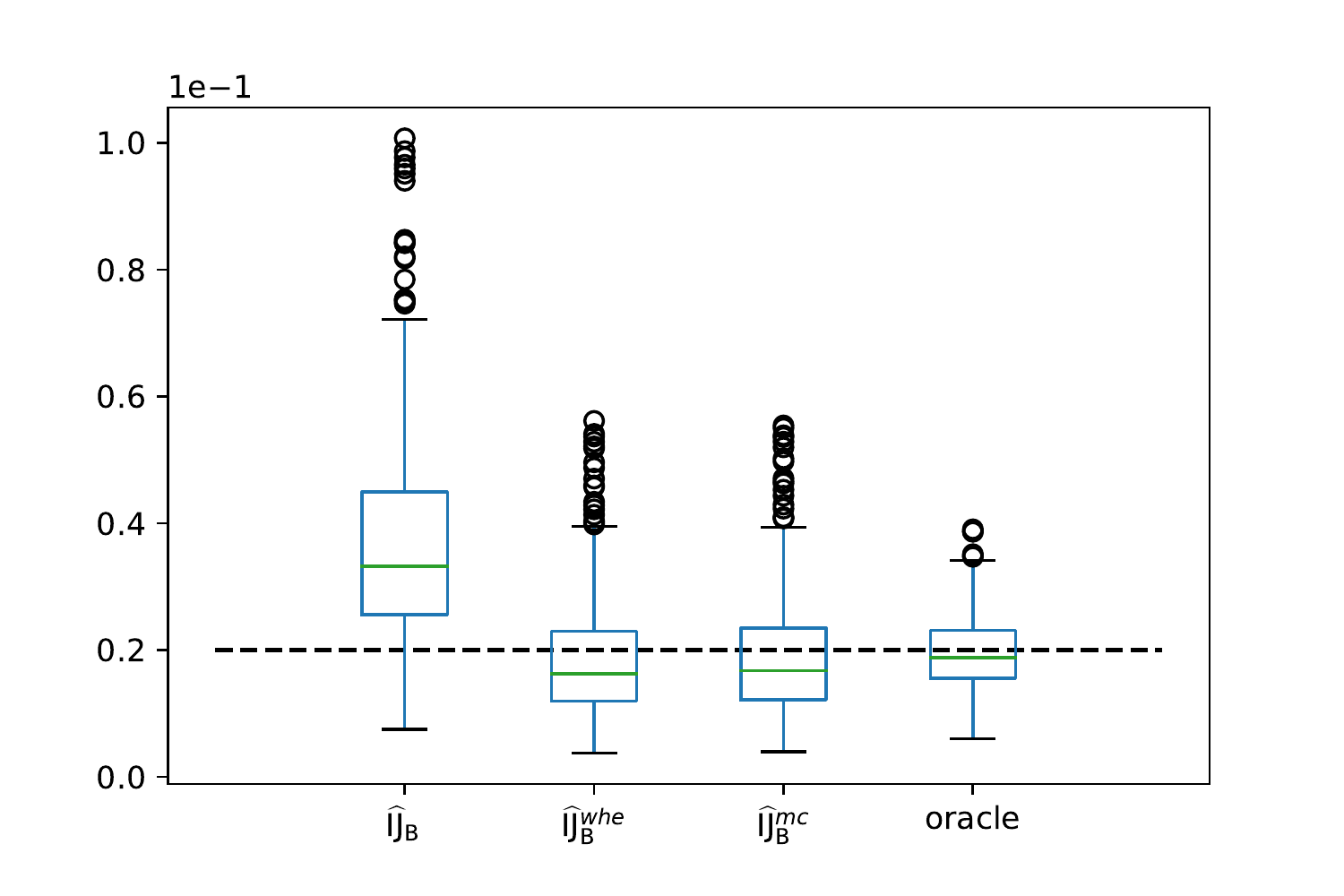}
		\caption{Performance of the infinitesimal jackknife and its bias-corrected alternatives on estimating the variance of the  bagged sample variance (B=100).}
	\end{minipage}
	\quad 
	\begin{minipage}{0.45\linewidth}
		\includegraphics[width=\textwidth,height=\textheight,keepaspectratio]{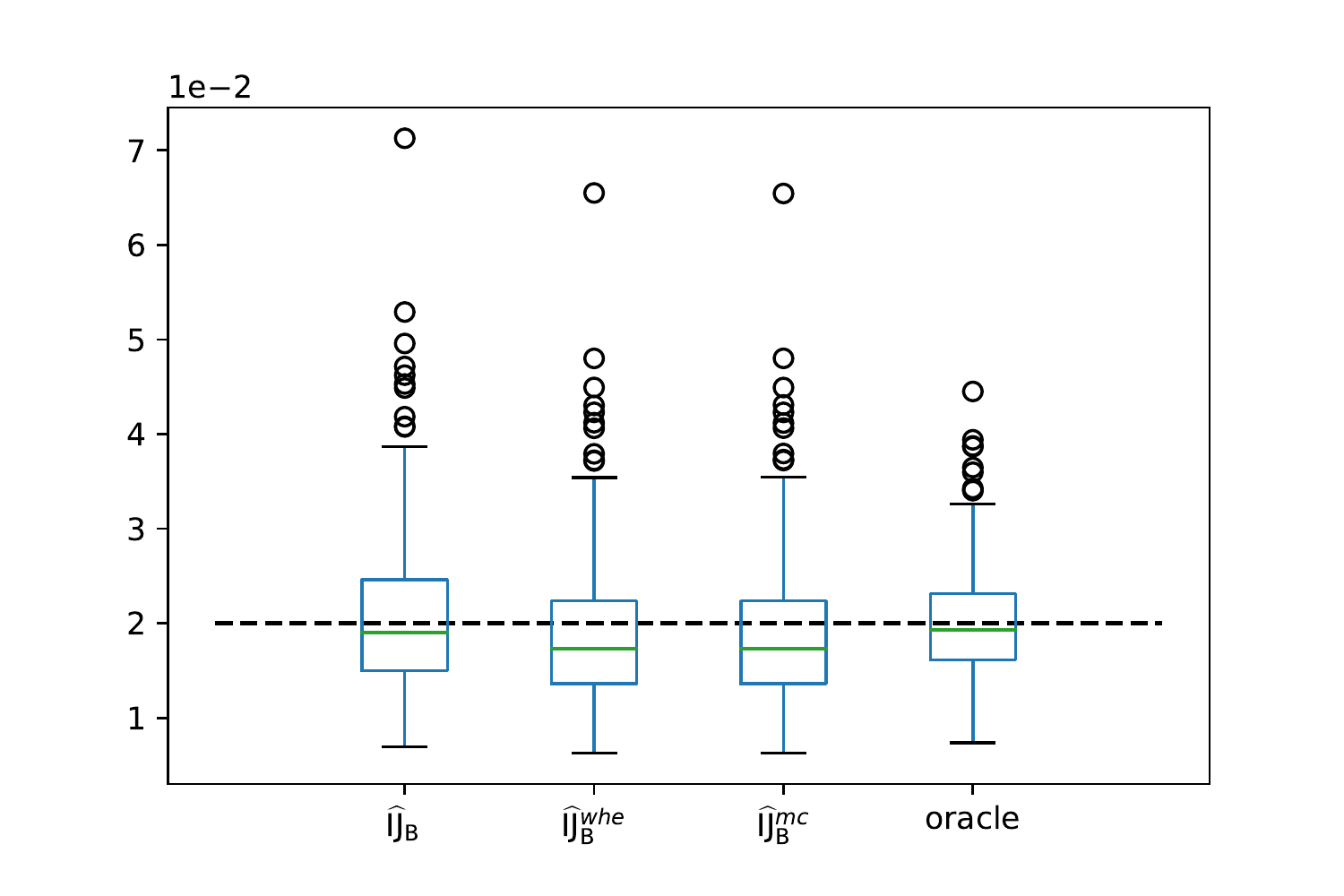}
		\caption{Performance of the infinitesimal jackknife and its bias-corrected alternatives on estimating the variance of the  bagged sample variance (B=1000).}
	\end{minipage}
\end{figure}

\vspace{3mm}
\noindent \textbf{Example 3: Sample Maximum}
Consider $s=\max \{X_1,\dots, X_n\}$, where $X_1,\dots, X_n$ are uniformly distributed in $[0,1]$. Then
\begin{equation}
	\V(\E_*[s^*]) = \frac{(n-\sum_{j=1}^n (\frac{j-1}{n})^n) (1+ \sum_{j=1}^n (\frac{j-1}{n})^n)}{(n+1)^2(n+2) }
\end{equation}
and 
\begin{equation}
	\E[\V_*(l^*)] = \frac{\sum_i [ \sum_{j=1}^{n} (\frac{j-1}{n})^n - \sum_{j=i+1}^{n}(\frac{j-1}{n})^{n-1}]^2}{(n+1)(n+2)}. 
\end{equation}
The details of the calculation can be found in \cref{app:bootstrap}. We have
\begin{equation}
	\begin{aligned}
		\frac{\E[\V_*(l^*)]}{\V(\E_*[s^*])} 
		& =  \frac{(n+1)\sum_i [ \sum_{j=1}^{n} (\frac{j-1}{n})^n - \sum_{j=i+1}^{n}(\frac{j-1}{n})^{n-1}]^2}{(n-\sum_j  (\frac{j-1}{n})^n) (1+ \sum_j (\frac{j-1}{n})^n)}\\
		& \to c  \in [0.24,0.25]\quad  (n \to \infty)
	\end{aligned}
\end{equation}
and thus we see that $\ij_\rb$ is underestimating of $\V(\E_*[s^*])$ by a considerable margin. In this case, $\E_*[s^*]$ is not close to a linear statistic, so $\ij_\rb$ should not be expected to perform well. In Figures 5 and 6, $X_1,\dots, X_n$ follow Unif(0, 1) and $n=100$ with the dashed line corresponding to the true value of $\V(\E_*[s^*])$. In this case, there is no obvious oracle estimator for $\E_*[s^*]$.  Unlike the previous two examples, although $\hij_\rb^{mc}$ and $\hij_\rb^{whe}$ remain quite similar, all three estimators suffer from considerable underestimation even when $B=1000$.

\begin{figure}[h]
	\centering
	\begin{minipage}{0.45\linewidth}
		\includegraphics[width=\textwidth,height=\textheight,keepaspectratio]{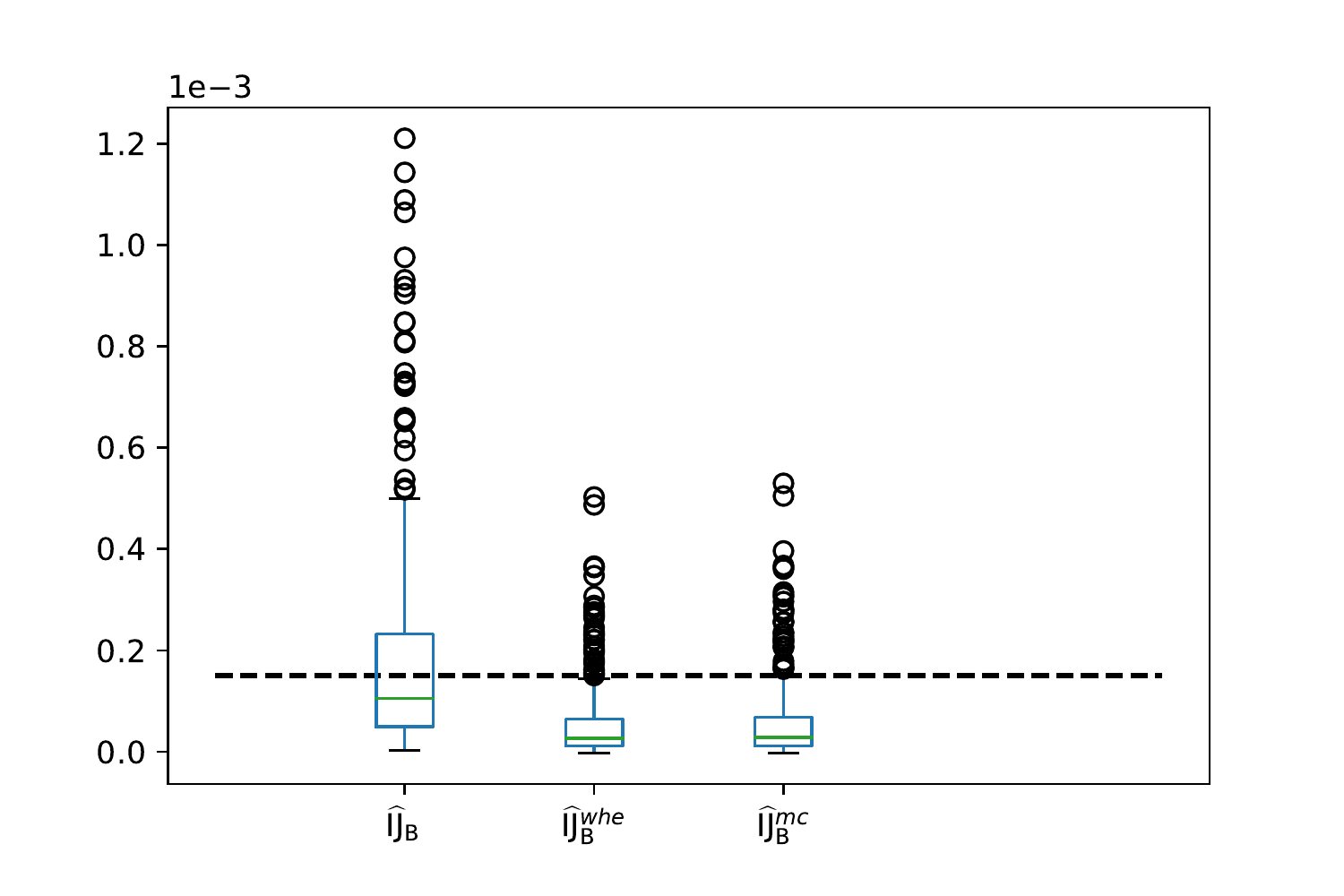}
		\caption{Performance of the infinitesimal jackknife and its bias-corrected alternatives on estimating the variance of the  bagged sample maximum (B=100).}
	\end{minipage}
	\quad 
	\begin{minipage}{0.45\linewidth}
		\includegraphics[width=\textwidth,height=\textheight,keepaspectratio]{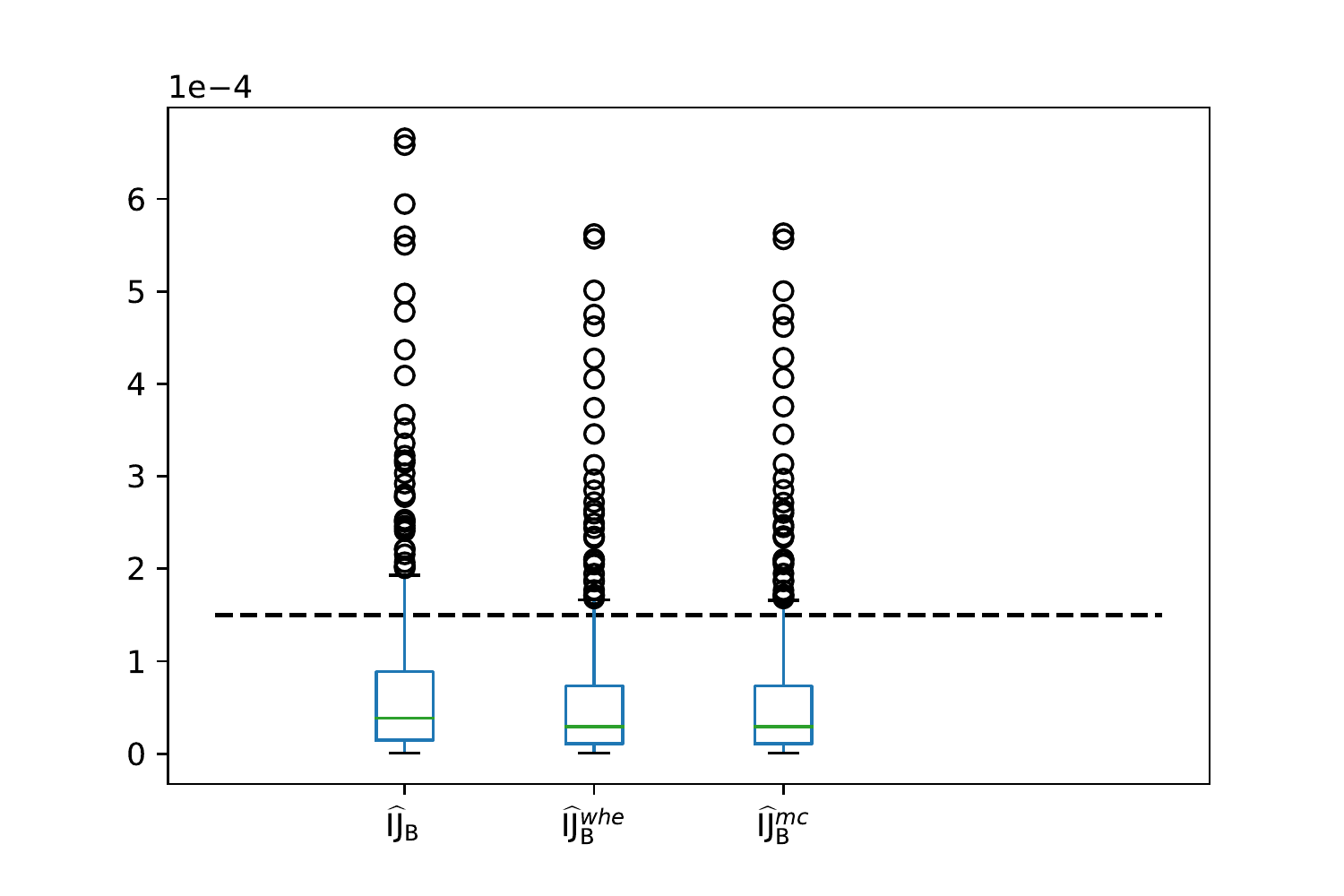}
		\caption{Performance of the infinitesimal jackknife and its bias-corrected alternatives on estimating the variance of the  bagged sample maximum (B=1000).}
	\end{minipage}
\end{figure}

The following theorem provides an equivalent condition for $\ij_\rb$ to be asymptotically unbiased and suggests that $\E_*[s^*]$ needs to be asymptotically linear.

\begin{theorem}
	\label{BOOT-2}
	Let $\E_*[s^*]$ be the bootstrap smoothed alternative of $s$, then 
	\begin{equation}
		\lim_{n\to \infty} \frac{\E[\V_*(l^*)]}{\V(\E_*[s^*])} = 1 \iff  \lim_{n\to \infty} n(1-\rho) = 1,
	\end{equation}
	where $\rho$ is the correlation coefficient between $e_1$ and $e_2$ and $e_i = \E_*[s^*|X_1^*=X_i]$.
\end{theorem}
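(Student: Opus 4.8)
The plan is to express both quantities in the ratio purely in terms of the first two moments of the exchangeable conditional expectations $e_i = \E_*[s^*\mid X_1^*=X_i]$, evaluate them in closed form, and then reduce the limiting statement to an elementary fact about a one-parameter family of ratios. The engine of the argument is \cref{3rivers}, which identifies $\V_*(l^*)=\mathrm{JK_B}=\mathrm{IJ_B}=\sum_{j}(e_j-s_0)^2$, together with the observation that $(e_1,\dots,e_n)$ are exchangeable random variables as functions of the i.i.d.\ sample $\D_n$.

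First I would rewrite $\V_*(l^*)$. Since under the bootstrap $X_1^*$ is uniform on $\{X_1,\dots,X_n\}$, we have $s_0=\E_*[s^*]=\tfrac{1}{n}\sum_j e_j=:\bar e$, so \cref{3rivers} gives $\V_*(l^*)=\sum_{j}(e_j-\bar e)^2$, i.e.\ $n$ times the empirical variance of the $e_j$. Taking expectation over the data and using the decomposition $\sum_j(e_j-\bar e)^2=\sum_j e_j^2-\tfrac{1}{n}\big(\sum_j e_j\big)^2$ together with exchangeability, a short moment computation yields
\[
\E[\V_*(l^*)]=(n-1)\big(\E[e_1^2]-\E[e_1 e_2]\big)=(n-1)\,\sigma_e^2(1-\rho),
\]
where $\sigma_e^2=\V(e_1)$ and $\rho=\cov(e_1,e_2)/\sigma_e^2$. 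The same bookkeeping applied to $\E_*[s^*]=\bar e$ gives
\[
\V(\E_*[s^*])=\V(\bar e)=\frac{1}{n^2}\big[n\sigma_e^2+n(n-1)\rho\,\sigma_e^2\big]=\frac{\sigma_e^2}{n}\big[1+(n-1)\rho\big].
\]

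Next I would form the ratio; the nuisance factor $\sigma_e^2$ cancels, leaving
\[
\frac{\E[\V_*(l^*)]}{\V(\E_*[s^*])}=\frac{n(n-1)(1-\rho)}{1+(n-1)\rho}.
\]
Setting $a_n:=(n-1)(1-\rho)$ and noting the algebraic identity $1+(n-1)\rho=n-a_n$, the ratio equals $\tfrac{n a_n}{\,n-a_n\,}$. Solving back, $a_n=\tfrac{n r_n}{n+r_n}$ in terms of the ratio $r_n$, so $r_n\to 1\iff a_n\to 1$. Finally, since $a_n=n(1-\rho)-(1-\rho)$ and either hypothesis forces $1-\rho\to 0$, we get $a_n\to 1\iff n(1-\rho)\to 1$, which closes the equivalence.

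The computation is largely routine once the exchangeability reduction is set up, so I do not expect a genuine obstacle; the steps needing the most care are the clean identification $s_0=\bar e$ and the moment bookkeeping, and a non-degeneracy check ensuring $\sigma_e^2>0$ and $\V(\E_*[s^*])>0$ so that $\rho$ and the ratio are well defined. It is also worth recording that exchangeability forces $\rho\ge -1/(n-1)$, hence $1+(n-1)\rho\ge 0$, so the denominator is nonnegative as required.
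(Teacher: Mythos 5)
Your proposal is correct and follows essentially the same route as the paper's proof: both reduce $\E[\V_*(l^*)]=(n-1)\V(e_1)(1-\rho)$ and $\V(\E_*[s^*])=\frac{\V(e_1)}{n}\bigl[1+(n-1)\rho\bigr]$ via exchangeability and \cref{3rivers}, and then analyze the same ratio $\frac{n(n-1)(1-\rho)}{1+(n-1)\rho}$. The only difference is cosmetic and slightly in your favor: where the paper finishes with a geometric-series expansion in $r=\frac{n-1}{n}(1-\rho)$, you solve the identity $r_n=\frac{na_n}{n-a_n}$ exactly, which avoids the series convergence caveat and makes the equivalence immediate.
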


To help provide some intuition, consider the case where $s=\bar{X}$, which is linear. Here, $\rho= \frac{n^2-1}{n^2+n-1} = 1 -1/n + o(1/n)$ and $\E[\V_*(l^*)] / \V(\E_*[s^*]) = \frac{n-1}{n}\to 1$. We suspect that to make $\rho = 1-1/n + o(1/n)$, $\E_*[s^*]$ is required to be equal to $l_b + o_p(1/n)$. From the original derivative of IJ and  the above examples, we can see that it is fairly hard for $\ij_\rb$ to estimate $\V_*(\E_*[s^*])$ well. In general, it is difficult to understand the bias and other statistical properties of $\ij_\rb$ comprehensively, largely due to the replicates of bootstrap samples.

%%%%%%%%%%%%%%%%%%%%%%%%%%%%%%%%%%%%%%%%%%%%
% U-statistics
%%%%%%%%%%%%%%%%%%%%%%%%%%%%%%%%%%%%%%%%%%%%

%%%%%%%%%%%%%%%%%%%%%%
% Subsampling w/o replacement
%%%%%%%%%%%%%%%%%%%%%%
\section{U-statistics and Infinitesimal Jackknife}
\subsection{Infinitesimal Jackknife for U-statistics ($\ij_\ru$)}

While the infinitesimal jackknife is naturally appealing for estimating the variance of bagged statistics, the current resurgence in interest is due in large part to its potential for quantifying the uncertainty of complex supervised machine learning ensembles often formed by subsampling.  Immediately following Efron's work in \cite{Efron2014}, \cite{WagerIJ} showed that the same infinitesimal jackknife approach could be used to generate confidence intervals for random forests.  In recent years, a number of works have expanded on this idea \cite{Mentch2016,Wager2018,peng2019asymptotic}.  Among the key breakthroughs was the realization that predictions generated by averaging across ensembles of regression estimates, each formed with subsamples of the original data, could be seen as akin to classical U-statistics.

We thus now explore how the infinitesimal jackknife can be extended to the case of subsampling without replacement. In this case,  $\E_*[s^*]$ is a U-statistic, which is often more convenient for theoretical analysis and also more likely to be close to linear.  Here $s$  is a (permutation-symmetric) function of $k$ i.i.d. random variables and the U-statistic can be written as 
\begin{equation}
	\begin{aligned}
		\mathrm{U} & = {n\choose k}^{-1}\sum_{(n,k)} s(X_{i_1}, \dots, X_{i_k}) \\
	\end{aligned}
\end{equation}
where the sum is taken over all ${n\choose k}$ subsamples of size $k$.
How does U depend on $\P_n$, such that $\ru = f(\P_n)$ for some $f$?  The dependence is abstract so that the subsampling proceeds according to the probabilities determined by $\P_n$.  Following directly from the original definition of the IJ, we arrive at the following theorem. 
\begin{theorem}
	\label{U-ij-1}
	The IJ estimator of the variance of a U-statistic is given by
	\begin{equation}
		\label{def:iju}
		\mathrm{IJ}_\ru= \frac{k^2}{n^2}\sum_{j=1}^n[\alpha e_j-\beta s_0]^2,
	\end{equation} 
	where $e_j = \E_*[s^*|X_1^*=X_j]$, $s_0=\E_*[s^*]$ and 
	$$\alpha = 1 + \frac{1}{n}\left\{\frac{k-1}{2} - \frac{1}{k}\sum_{j=0}^{k-1}\frac{j^2}{(n-j)}\right\}, \quad 
	\beta = 1 + \frac{1}{k}\sum_{j=0}^{k-1}\frac{j}{n-j}.$$
\end{theorem}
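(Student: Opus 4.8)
The plan is to apply the original definition of the infinitesimal jackknife in \cref{eqn:IJdef}--\cref{di} directly to the functional $f$ for which $\ru=f(\P_n)$, namely the one that draws a size-$k$ subsample without replacement according to the probabilities assigned by its argument measure. Since $\mathrm{IJ}_\ru=\frac{1}{n^2}\sum_j\rd_j^2$, the whole problem reduces to computing the directional derivatives
\[
\rd_j=\lim_{\epsilon\to 0}\frac{f\big((1-\epsilon)\P_n+\epsilon\delta_{X_j}\big)-f(\P_n)}{\epsilon}
\]
and then recognizing the answer as $k(\alpha e_j-\beta s_0)$. First I would write $f\big((1-\epsilon)\P_n+\epsilon\delta_{X_j}\big)$ as an explicit sum over the admissible size-$k$ subsamples, each weighted by the probability of selecting it without replacement under the perturbed measure $\P_{n,\epsilon}:=(1-\epsilon)\P_n+\epsilon\delta_{X_j}$, exactly in the spirit of the bootstrap computation of Method~1, which wrote the perturbed bagged statistic as a sum over bootstrap samples carrying the perturbed multinomial weights.

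The core of the argument is a first-order Taylor expansion of these without-replacement selection probabilities in $\epsilon$. Under $\P_{n,\epsilon}$ the point $X_j$ carries mass $\tfrac1n+\epsilon\big(1-\tfrac1n\big)$ while every other point carries mass $\tfrac{1-\epsilon}{n}$, so each subsample's probability picks up a factor reflecting (i) whether $X_j$ is selected and (ii) how the added mass at $X_j$ interacts with the successive depletion of available mass as the $k$ elements are drawn. Because each draw removes mass from the pool, the available mass before the $\ell$-th draw has the form $1-(\ell-1)/n$, and carrying this normalization through all $k$ draws is what produces the harmonic-type quantities $\sum_{j=0}^{k-1}\tfrac{j}{n-j}$ and $\sum_{j=0}^{k-1}\tfrac{j^2}{n-j}$ that appear in $\beta$ and $\alpha$. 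I expect this bookkeeping to be the main obstacle: the careful, exact treatment of the depletion (and of the interaction between the perturbed mass and the position at which $X_j$ enters) is precisely what separates the correct derivative from the naive one, since a too-casual treatment simply reproduces the \emph{pseudo}-IJ form with $\alpha=\beta=1$ used in practice.

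Once $\rd_j$ is expanded, I would collapse the resulting sum over subsamples using the permutation symmetry of the kernel $s$. Every subsample in which some position is occupied by $X_j$ alongside $k-1$ other distinct observations averages to $e_j=\E_*[s^*\mid X_1^*=X_j]$, and every unconstrained configuration averages to $s_0=\E_*[s^*]$; after collecting the position-dependent coefficients and interchanging the order of the double summation over draw index and position, the two families of harmonic sums assemble into the constants $\alpha$ and $\beta$ as stated. Along the way it is worth recording the elementary identity $\tfrac1k\sum_{l=1}^{k}(l-1)=\tfrac{k-1}{2}$ implicit in $\alpha$, and noting that $\beta=\tfrac nk\big(H_n-H_{n-k}\big)$, so that $\alpha,\beta\to 1$ as $n\to\infty$; this confirms that the deviation from the pseudo-IJ is an $O(1/n)$ effect that nonetheless grows with $k$, matching the paper's claim that the correction matters for large subsamples.

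Finally, substituting $\rd_j=k(\alpha e_j-\beta s_0)$ into $\mathrm{IJ}_\ru=\frac{1}{n^2}\sum_j\rd_j^2$ yields $\frac{k^2}{n^2}\sum_j(\alpha e_j-\beta s_0)^2$, which is \cref{def:iju}. As a sanity check I would verify the boundary case $k=1$, where $\alpha=\beta=1$, $e_j=s(X_j)$ and $s_0=\bar s$, so that the estimator collapses to the familiar $\frac{1}{n^2}\sum_j\big(s(X_j)-\bar s\big)^2$.
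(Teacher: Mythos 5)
Your proposal is correct and follows essentially the same route as the paper's own proof: the paper likewise applies the IJ definition directly, writes out the perturbed without-replacement selection probabilities (its $p_0$ for subsamples excluding $X_j$ and $p_1=\sum_i q_i$ for those containing it, with exactly the sequential-depletion factors you describe), differentiates them at $\epsilon=0$, and collapses the sum by permutation symmetry into $\rd_j = k(\alpha e_j-\beta s_0)$. The only difference is one of execution rather than approach --- the paper carries out explicitly the bookkeeping you defer (computing $p_0'/p$ and $p_1'/p$ term by term) --- and your intermediate claims (the form of $\rd_j$, the identity $\beta=\tfrac{n}{k}(H_n-H_{n-k})$, and the $k=1$ sanity check) are all consistent with it.
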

To understand the bias of $\ij_\ru$, the H-decomposition will prove quite useful. To set up this H-decomposition, we first need to introduce following notation for kernels $s^{1}, \dots, s^{k}$ of degrees $1,\dots, k$. These kernels
are defined recursively as follows
\begin{equation}
	s^1(x_1) = s_1(x_1)
\end{equation}
and 
\begin{equation}
	s^c(x_1,\dots, x_c) = s_c(x_1,x_2,...,x_c) - \sum_{j=1}^c \sum_{i_1,\dots, i_j \in \{1,\dots, c\}} s^j(x_{i_1},\dots, x_{i_j})
\end{equation}
where $s_c(x_1,\dots, x_c) = \E[s(x_1,\dots, x_c, X_{c+1},\dots, X_k)] -\E[s]$. Let $V_j = \V(s^{j})$ for $j=1,\dots, k$.  Then $\E[\ij_\ru]$ can both be written as a linear combination of those $V_j$. In particular, we have the following theorem.
\begin{theorem}
	\label{U-ij-2}
	Let $\theta=\E[s]$ and $\ij_\ru$ be as defined in \cref{def:iju}.  Then 
	\begin{equation}
		\E[\ij_\ru] = \sum_{j=1}^k r_j {k\choose j}^2{n\choose j}^{-1} V_j + \frac{k^2}{n}(\alpha-\beta)^2\theta^2, 
	\end{equation}
	where 
	\begin{equation}
		r_j=   \frac{(n-k)^2}{n^2}\left[\frac{j}{1-j/n} \alpha^2\right]+\frac{k^2}{n} (\alpha-\beta)^2 , \quad \text{for } j=1,\dots,k.
	\end{equation}
	
\end{theorem}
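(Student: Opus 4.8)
The plan is to collapse the whole sum in \cref{def:iju} to a single second moment and then expand that moment through the H-decomposition introduced above. Since $X_1,\dots,X_n$ are i.i.d. and $s$ is permutation symmetric, every summand $\E[(\alpha e_j-\beta s_0)^2]$ is equal, so
\[
\E[\ij_\ru] = \frac{k^2}{n}\,\E\big[(\alpha e_1-\beta s_0)^2\big].
\]
Writing $\alpha e_1-\beta s_0 = \alpha(e_1-\theta)-\beta(s_0-\theta)+(\alpha-\beta)\theta$ and noting $\E[e_1-\theta]=\E[s_0-\theta]=0$ (both $e_1$ and $s_0=\mathrm{U}$ are centered at $\theta$), the cross term drops, so the task reduces to computing $P=\V(e_1)$, $Q=\cov(e_1,s_0)$, and $R=\V(s_0)$, after which $\E[(\alpha e_1-\beta s_0)^2]=\alpha^2P-2\alpha\beta Q+\beta^2R+(\alpha-\beta)^2\theta^2$.

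First I would insert $s(X_A)=\theta+\sum_{c=1}^k\sum_{B\subseteq A,\,|B|=c}s^c(X_B)$ into $s_0=\mathrm{U}$ and into $e_1=\binom{n-1}{k-1}^{-1}\sum_{A\ni 1}s(X_A)$. Grouping by the canonical block $B$ and counting how many size-$k$ supersets $A$ with $1\in A$ contain a given $B$ produces the weight $a_c=\binom{n-c}{k-c}$ when $1\in B$ and $b_c=\binom{n-c-1}{k-c-1}$ when $1\notin B$, while $s_0$ contributes the single weight $\binom{n-c}{k-c}/\binom{n}{k}=\binom{k}{c}/\binom{n}{c}$. Using orthogonality and degeneracy of the $s^c$ (distinct blocks contribute nothing and each block of size $c$ contributes $V_c=\V(s^c)$), the three moments become explicit linear combinations of the $V_c$. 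The bookkeeping is tamed by the identity $b_c=\tfrac{k-c}{n-c}a_c$, which collapses $c\,a_c+(n-c)b_c=k\,a_c$ and $c\,a_c^2+(n-c)b_c^2=a_c^2\tfrac{c(n-c)+(k-c)^2}{n-c}$, together with $\binom{n-1}{k-1}^{-1}a_c=\tfrac{n}{k}\binom{k}{c}/\binom{n}{c}$.

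These reductions give $Q=R=\sum_c\binom{k}{c}^2\binom{n}{c}^{-1}V_c$, i.e.\ $\cov(e_1,s_0)=\V(s_0)$ (which also follows directly from $s_0=\tfrac1n\sum_j e_j$), and a closed form for $P$. The identity $Q=R$ then lets me rewrite $\alpha^2P-2\alpha\beta Q+\beta^2R=\alpha^2(P-R)+(\alpha-\beta)^2R$, so the $(\alpha-\beta)^2$ piece combines with $(\alpha-\beta)^2\theta^2$ to give the advertised term $\tfrac{k^2}{n}(\alpha-\beta)^2\theta^2$ plus $\tfrac{k^2}{n}(\alpha-\beta)^2R$, the latter supplying exactly the $\tfrac{k^2}{n}(\alpha-\beta)^2$ summand of $r_c$. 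It remains to read off the coefficient of $V_c$ in $P-R$: the numerator $n[c(n-c)+(k-c)^2]-k^2(n-c)$ simplifies to $c(n-k)^2$, leaving $\tfrac{c(n-k)^2}{k^2(n-c)}\binom{k}{c}^2\binom{n}{c}^{-1}$, which after multiplication by $\tfrac{k^2}{n}\alpha^2$ is precisely the $\tfrac{(n-k)^2}{n^2}\tfrac{c}{1-c/n}\alpha^2$ summand of $r_c$. Collecting terms yields the claim.

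I expect the main obstacle to be the bookkeeping in the second step: correctly enumerating the supersets $A$ according to whether $1\in B$ or $1\notin B$ and carrying the combinatorial weights through to the linear-in-$V_c$ form without error. The decisive simplification is the algebraic collapse of the $P-R$ numerator to $c(n-k)^2$, and the intermediate checkpoint $Q=R$ provides useful reassurance that the kernel accounting is correct before the final matching.
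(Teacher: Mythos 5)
Your proof is correct, and it reaches the result by a cleaner algebraic route than the paper, though both share the same skeleton: the symmetry reduction $\E[\ij_\ru]=\frac{k^2}{n}\E[(\alpha e_1-\beta s_0)^2]$, isolation of the $(\alpha-\beta)^2\theta^2$ term, and an H-decomposition with orthogonality of distinct blocks. The difference is in what gets decomposed. The paper decomposes the random variable: it splits $\alpha e_1-\beta s_0$ into weighted kernel sums according to whether the subsample contains $X_1$, pushes the H-decomposition through, and regroups as $(\alpha-\beta)\theta+A_n+B_n$ with $A_n$ (blocks avoiding $X_1$) and $B_n$ (blocks containing $X_1$) uncorrelated and mean zero; it then evaluates $\E[A_n^2]$ and $\E[B_n^2]$ separately and assembles $r_j$ from the resulting coefficient $\Lambda(j)$. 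You decompose the moment: $\alpha^2\V(e_1)-2\alpha\beta\cov(e_1,s_0)+\beta^2\V(s_0)$ plus the $\theta^2$ term, and then use $\cov(e_1,s_0)=\V(s_0)$ --- immediate from $s_0=\frac{1}{n}\sum_j e_j$ and exchangeability --- to rewrite this as $\alpha^2\left(\V(e_1)-\V(s_0)\right)+(\alpha-\beta)^2\V(s_0)$. Since $\V(s_0)=\V(\ru)=\sum_j{k\choose j}^2{n\choose j}^{-1}V_j$, the $(\alpha-\beta)^2$ summand of $r_j$ falls out for free, and only the difference $\V(e_1)-\V(s_0)$ needs computing; I checked your counting weights $a_c={n-c\choose k-c}$ and $b_c={n-c-1\choose k-c-1}$, the identities $b_c=\frac{k-c}{n-c}a_c$ and ${n-1\choose k-1}^{-1}a_c=\frac{n}{k}{k\choose c}/{n\choose c}$, and the key cancellation $n[c(n-c)+(k-c)^2]-k^2(n-c)=c(n-k)^2$, all of which are right. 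Your organization buys interpretability (the two summands of $r_j$ are exposed as the baseline U-statistic variance coefficient and the excess variance of $e_1$ over $\ru$, scaled by $\alpha^2$) and roughly halves the bookkeeping. What the paper's random-variable split buys is reusability: essentially the same $\exists 1/\not\exists 1$ decomposition is recycled for $\sij_\ru$ in \cref{U-sij} and, more importantly, in the consistency proofs of \cref{U-1} and \cref{U-2}, where one needs the dominant term $s^1(X_i)$ as a random variable (to run a law-of-large-numbers argument), not merely its second moment.
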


\begin{remark}
	Note that $\V(\ru) = \sum_{j=1}^k {k\choose j}^2{n\choose j}^{-1}V_j$. If $k$ is fixed, then $\alpha, \beta \to 1$ and thus  $r_j\to j$ for $j=1,\dots, k$. Since $\V(\mathrm{U})$ is dominated by the $V_1$ term, $\ij_\ru$ is asymptotically unbiased.  
\end{remark}

%%%%%%%%%%%%%%%%%%%%%%
% Bias of subsampled U-stat IJ
%%%%%%%%%%%%%%%%%%%%%%
\subsection{The Pseudo Infinitesimal Jackknife for U-statistics ($\sij_\ru$)}

In recent work, \cite{Wager2018} investigated the consistency and asymptotic normality of random forests where the individual trees were constructed with subsamples of the original data.  As part of this work, the authors proposed another estimate of variance wherein the format of the IJ for bootstrap was simply copied over to this subsampling regime to arrive at
\begin{equation}
	\label{pseudo-IJ-U}
	\sum_j \cov^2_*(s^*, w_j^*),
\end{equation}
where $*$ refers to the subsampling procedure.  We refer to this estimator as $\sij_\ru$, since it is not derived from the definition of the infinitesimal jackknife.   It is, however, possible to provide a more rigorous motivation for $\sij_\ru$.
Recall from Section 2 (see \cref{app-1}), we assume that $f(\P_n)-f(\rp)$ can be written as $\frac{1}{n}\sum_i f'(\rp,X_i) + o_p(1/n)$, where the dominant term is a sum of i.i.d. random variables and we estimate the variance of $f'(\rp,X_i)$ by $\frac{1}{n}\sum_i f^{'2}(\P_n,X_i)$. Now suppose that we write $f(\P_n) - f(\rp)$ as  $\sum_i g(X_i) + o_p(1/n)$, where $g(X_i)$ is not necessarily $\frac{1}{n}f'(\rp, X_i)$. From classical U-statistic theory, we know that there is a natural candidate for $g(X_i)$:  the H\'ajek projection - $\E[f(\P_n)-f(\rp)|X_i] = \frac{k}{n}\E[s-\E[s]|X_i]$. Since $\V(\sum g(X_i)) = \frac{k^2}{n}V_1$, where $V_1 = \V(\E[s|X_1])$, we need only to propose a reasonable estimate for $V_1$ and we can then use 
\begin{equation}
	\label{l-u}
	\frac{k^2}{n}\hat{V}_1
\end{equation}
as an estimate of the variance of the U-statistic. Since $V_1 = \E[\E[s|X_1]-\E[s]]^2$, a natural candidate of $\hat{V}_1$ would be
\begin{equation}
	\frac{1}{n}\sum_j (\E_*[s^*|X_1^*=X_j] - \E_*[s^*])^2 = \frac{1}{n}\sum_j (e_j-s_0)^2.
\end{equation}
As it turns out that \cref{l-u} is the same as \cref{pseudo-IJ-U}.
\begin{proposition}
	\label{U-sij-pre}
	Let $\D_n^*=(X_1^*,\dots, X_k^*)$ denote a subsample of size $k$ from the original data $\D_n$ and define $w_j^* =  \one_{X_j\in\D_n^* }$.  Then 
	\[\cov_*(s^*, w_j^*) =  \frac{k}{n}(e_j-s_0),\]
	where $*$ refers to the subsampling procedure, $e_j = \E_*[s^*|X_1^*=X_j]$ and $s_0 = \E_*[s^*]$.
	Thus,
	\begin{equation}
		\sij _\ru =  \sum_j \cov^2_*(s^*, w_j^*)= \left(\frac{{k\choose 1}}{{n\choose 1}}\right)^2\sum_j  (e_j-s_0)^2.
	\end{equation}	
\end{proposition}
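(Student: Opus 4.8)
The plan is to evaluate the conditional covariance $\cov_*(s^*,w_j^*)$ directly from its definition and then square and sum over $j$. Throughout, everything is an expectation over the subsampling procedure with the data $\D_n$ held fixed, so once the relevant first and cross moments are identified the calculation is elementary. First I would record the two first moments of the weight: since the size-$k$ subsample is drawn uniformly without replacement from the $n$ observations, each fixed $X_j$ lies in the subsample with probability $k/n$, so $\E_*[w_j^*]=\P_*(X_j\in\D_n^*)=k/n$; and $\E_*[s^*]=s_0$ holds by definition. The only nontrivial quantity is the cross term $\E_*[s^*w_j^*]$.

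Next I would compute $\E_*[s^*w_j^*]$ by conditioning on the event $\{X_j\in\D_n^*\}$. Because $w_j^*=\one_{X_j\in\D_n^*}$ is an indicator,
\[
\E_*[s^*w_j^*]=\P_*(X_j\in\D_n^*)\,\E_*[s^*\mid X_j\in\D_n^*]=\frac{k}{n}\,\E_*[s^*\mid X_j\in\D_n^*].
\]
The key step is to identify $\E_*[s^*\mid X_j\in\D_n^*]$ with $e_j=\E_*[s^*\mid X_1^*=X_j]$. This is where the permutation symmetry of $s$ is used: since $s^*=s(X_1^*,\dots,X_k^*)$ depends only on the unordered collection of sampled points, and since uniform subsampling is exchangeable across the $k$ slots, conditioning on $X_j$ occupying the first slot produces the same average of $s$ as conditioning merely on $X_j$ belonging to the subsample. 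Both are the uniform average of $s$ over the $\binom{n-1}{k-1}$ size-$k$ subsets that contain $X_j$, whence $\E_*[s^*\mid X_j\in\D_n^*]=e_j$ and so $\E_*[s^*w_j^*]=\frac{k}{n}e_j$.

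Combining the pieces gives $\cov_*(s^*,w_j^*)=\E_*[s^*w_j^*]-\E_*[s^*]\E_*[w_j^*]=\frac{k}{n}e_j-s_0\cdot\frac{k}{n}=\frac{k}{n}(e_j-s_0)$, which is the first assertion. Squaring and summing over $j$ then yields
\[
\sij_\ru=\sum_j\cov^2_*(s^*,w_j^*)=\frac{k^2}{n^2}\sum_j(e_j-s_0)^2=\left(\frac{{k\choose 1}}{{n\choose 1}}\right)^2\sum_j(e_j-s_0)^2,
\]
using ${k\choose 1}=k$ and ${n\choose 1}=n$.

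The only real obstacle is the symmetry argument identifying the two conditional expectations; everything else is bookkeeping. I would make that step fully rigorous by writing each conditional expectation explicitly as a uniform average of $s$ over the subsets of size $k$ containing $X_j$, and then invoking the permutation symmetry of $s$ to conclude that the particular slot in which $X_j$ is placed is immaterial, so the two averages agree.
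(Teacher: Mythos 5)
Your proof is correct and takes essentially the same route as the paper: both compute $\cov_*(s^*,w_j^*)=\E_*[s^*w_j^*]-\frac{k}{n}s_0$ and then identify $\E_*[s^*w_j^*]=\frac{k}{n}e_j$ by recognizing that conditioning on $X_j$ belonging to the subsample gives the uniform average of $s$ over the $\binom{n-1}{k-1}$ subsets containing $X_j$, which by permutation symmetry equals $\E_*[s^*\mid X_1^*=X_j]$. The only difference is presentational: the paper carries out this identification as an explicit sum over weight configurations with probability $\frac{(k-1)!}{(n-1)\cdots(n-k+1)}$, whereas you phrase it via conditioning on the membership event and exchangeability of the slots.
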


\noindent In an analogous fashion to $\ij_\ru$, we can establish the following theorem.
\begin{theorem}
	\label{U-sij}
	The pseudo-IJ estimator of the variance of a U-statistic is defined as 
	\begin{equation}
		\sij_\ru= \frac{k^2}{n^2}\sum_{j=1}^n[ e_j-s_0]^2
	\end{equation} 
	where $e_j = \E_*[s^*|X_1^*=X_j]$ and $s_0=\E_*[s^*]$. Then 
	\begin{equation}
		\E[\sij_\ru] = \sum_{j=1}^k r_j{k\choose j}^2{n\choose j}^{-1} V_j,
	\end{equation}
	where 
	\begin{equation}
		r_j = \left(\frac{n-k}{n}\right)^2 \frac{j}{1-j/n}, \quad \textit{for } i = 1,\dots, k.
	\end{equation}
\end{theorem}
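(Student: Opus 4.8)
The plan is to mirror the derivation of $\E[\ij_\ru]$ in \cref{U-ij-2}. Comparing \cref{def:iju} with the statement above, $\sij_\ru$ is exactly $\ij_\ru$ with the constants $\alpha$ and $\beta$ both replaced by $1$: both are quadratic forms $\frac{k^2}{n^2}\sum_{j=1}^n(a\,e_j - b\,s_0)^2$, and the particular $1/n$-order values of $\alpha,\beta$ play no role in the expansion until the final collection step. The definitional identity $\sij_\ru = \frac{k^2}{n^2}\sum_j(e_j-s_0)^2$ is already supplied by \cref{U-sij-pre}, so the substance is the computation of the expectation, which I would carry out with generic coefficients and then specialize to $a=b=1$.

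First I would expand the square and substitute the H-decomposition into both $e_j$ and $s_0$. Writing $s(X_S) = \theta + \sum_{c=1}^k \sum_{T\subseteq S,\,|T|=c} s^c(X_T)$ for the canonical kernels with $\V(s^c)=V_c$, the U-statistic is $s_0 = \theta + \sum_{c=1}^k {k\choose c}{n\choose c}^{-1}\sum_{|T|=c} s^c(X_T)$. For $e_j$, I would use that conditioning on $X_1^*=X_j$ averages $s$ over the ${n-1\choose k-1}$ size-$k$ subsamples containing the index $j$; plugging in the decomposition and counting, for each fixed $T$, the subsamples $S$ with $|S|=k$, $j\in S$, and $T\subseteq S$ splits into two cases according to whether $j\in T$ (giving weight ${n-c\choose k-c}$) or $j\notin T$ (giving weight ${n-c-1\choose k-c-1}$). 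This expresses $e_j-s_0$ as a sum over degrees $c$ of the canonical kernels, with one coefficient for the terms $T\ni j$ and another for the terms $T\not\ni j$.

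Next I would take the expectation over $\bX$ using the orthogonality of canonical kernels, $\E[s^c(X_T)s^{c'}(X_{T'})] = V_c\,\one\{c=c',\,T=T'\}$ and $\E[s^c]=0$ for $c\ge 1$. This kills all cross-degree and cross-subset contributions, so $\E[(e_j-s_0)^2]$ reduces to $\sum_{c=1}^k V_c$ times the squared $T\ni j$ coefficient counted ${n-1\choose c-1}$ times plus the squared $T\not\ni j$ coefficient counted ${n-1\choose c}$ times. By symmetry this is independent of $j$, so summing over the $n$ positions and multiplying by $\frac{k^2}{n^2}$ gives $\E[\sij_\ru]$ as a single sum over degrees. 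Setting $a=b=1$ removes the constant $\theta^2$ contribution (which in \cref{U-ij-2} carries the factor $(\alpha-\beta)^2$) and, after simplifying the binomials, collapses the coefficient of $V_c$ to $r_c{k\choose c}^2{n\choose c}^{-1}$ with $r_c = \left(\frac{n-k}{n}\right)^2\frac{c}{1-c/n}$, which is the claimed sum after renaming the degree index.

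I expect the main obstacle to be the binomial bookkeeping in this last step: one must verify that the two weights, after squaring, scaling by ${n-1\choose c-1}$ and ${n-1\choose c}$, and combining, simplify cleanly to the claimed $r_c$. The key simplifications are that ${n-1\choose k-1}^{-1}{n-c\choose k-c} = \frac{n}{k}{k\choose c}{n\choose c}^{-1}$ and that the $T\not\ni j$ coefficient is a fixed multiple $-\frac{c}{n-c}$ of the $T\ni j$ coefficient, which makes the two binomial counts recombine through ${n-1\choose c}=\frac{n-c}{c}{n-1\choose c-1}$ into a single factor $\frac{n}{n-c}$. Granting \cref{U-ij-2}, this obstacle reduces merely to checking that no step in its derivation used the specific values of $\alpha$ and $\beta$, after which substituting $\alpha=\beta=1$ yields the result directly.
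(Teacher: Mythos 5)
Your proposal is correct and follows essentially the same route as the paper: the paper's own (implicit) proof of \cref{U-sij} is exactly to rerun the H-decomposition/orthogonality computation from the proof of \cref{U-ij-2} --- which treats $\alpha$ and $\beta$ as generic constants throughout --- and then specialize to $\alpha=\beta=1$, which kills the $(\alpha-\beta)^2\theta^2$ term and collapses the ratio to $r_j=\left(\frac{n-k}{n}\right)^2\frac{j}{1-j/n}$. Your subset counts (${n-c\choose k-c}$ versus ${n-c-1\choose k-c-1}$), the identity ${n-1\choose k-1}^{-1}{n-c\choose k-c}=\frac{n}{k}{k\choose c}{n\choose c}^{-1}$, and the $-\frac{c}{n-c}$ proportionality between the two coefficients all check out, so the bookkeeping step you flag as the main obstacle goes through as claimed.
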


Note that although our goal here is to use $\frac{k^2}{n}\hat{V}_1$ to estimate $\frac{k^2}{n}V_1$, $\E[\frac{k^2}{n}\hat{V}_1] = \E[\sij_\ru]$ involves higher order terms of $V_2,\dots, V_k$.  Though not ideal, it is unavoidable since we do not have new data generated from the underlying distribution. If we simply multiply $\sij_\ru$ by $(\frac{n}{n-k})^2\frac{n-1}{n}$ as proposed in \cite{Wager2018}, then only the first term is unbiased, but it doubles the quadratic term, triples the cubic term etc. (a similar phenomenon was discovered by \cite{efron1981Nonparametric} for the jackknife variance estimator). This explains why this estimation procedure is inflated in practice.  In many applications, $k$ is not small and so the higher order terms of $\V(\ru)$ are not negligible and the effect of $r_j$ cannot be ignored. Indeed, as alluded to at the beginning of this section, in many modern machine learning applications like random forests, $k$ corresponds to the number subsamples utilized in the construction of each base learner (trees, in the case of random forests),and so $k$ is generally best chosen to be as large as possible.

%%%%%%%%%%%%%%%%%%%%%%
% The consistency of the Pseudo-IJ
%%%%%%%%%%%%%%%%%%%%%%
\subsection{The Consistency of $\sij_\ru$ and Its Derivatives}

Comparing Theorem's \ref{U-ij-1} and \ref{U-sij}, it is straightforward to see that if $k/n\to 0$, then $\alpha \to 1$ and $\alpha-\beta\to 0$, and thus $\ij_\ru \to \sij_\ru$.  More generally, $\sij_\ru$ has a slighter simpler expression and in this subsection, we investigate its statistical properties, focusing in particular on its consistency.  We begin by introducing the idea of generalized U-statistics, recently defined in \cite{peng2019asymptotic}. 
\begin{definition}[Generalized U-statistic \cite{peng2019asymptotic}]
	\label{gu}
	Suppose $X_1, \dots, X_n$ are i.i.d.\ samples from $\rp$ and let $s$ denote a (possibly randomized) real-valued function that is permutation symmetric in its $k \leq n$ arguments. A generalized U-statistic with kernel $s$ of order (rank) $k$ refers to any estimator of the form 
	\begin{equation}
		\label{eqn:gu}
		\ru_{n,k,N,\omega} = \frac{1}{\hat{N}} \sum_{(n,k)} \rho s(X_{i_1}, \dots, X_{i_k}; \omega)
	\end{equation}
	where $\omega$ denotes i.i.d.\ randomness, independent of the original data. The $\rho$ denotes i.i.d.\ Bernoulli random variables determining which subsamples are selected where $\P(\rho=1) = N/{n\choose k}$ and $\hat{N}$ corresponds to the sum of the $\rho$. When $N={n\choose k}$, the estimator in  \cref{eqn:gu} is a generalized complete U-statistic and is denoted as $\ru_{n,k,\omega}$.  When $N < {n\choose k}$, these estimators are generalized incomplete U-statistics. 
\end{definition} 
Generalized U-statistics are essentially incomplete U-statistics with potentially extra randomness and where the order of the kernel may grow with $n$.  Random forests, for example, are generalized U-statistics in which the additional randomness $\omega$ determines which features are eligible for splitting at each node in each tree.
In recent work, \cite{peng2019asymptotic} proved that if $\frac{k}{n}({\zeta_k}/{k\zeta_{1,\omega}}-1)\to 0$, then the complete generalized U-statistic $\ru_{n,k,\omega}$ is asymptoticly normal with variance $\frac{k^2}{n}\zeta_{1,\omega}$, where $\zeta_k= \V(s)$ and $\zeta_{1,\omega} =V_1=  \V(\E[s|X_1])$. Let $e^{\omega}_i = {n-1\choose k-1}^{-1} \sum s(X_i, \dots;\omega)$ and let $s_0^{\omega} = {n \choose k}^{-1}\sum s(\dots; \omega)$. Note that each collection of subsamples is paired with an i.i.d. $\omega$. Then the corresponding $\sij_\ru$ for generalized U-statistics is defined as 
\begin{equation}
	\label{eq:psIJomega}
	\sij^{\omega}_\ru = \frac{k^2}{n^2}\sum [e_i^\omega - s_0^\omega]^2.
\end{equation}
The following theorem gives that the same conditions used by \cite{peng2019asymptotic} to establish asymptotic normality are sufficient to establish the consistency of $\sij^\omega_\ru$.  In other words, if the (generalized) U-statistic is nearly linear, then  $\sij ^\omega_\ru/ \V(\ru_{n,k,\omega})$ converges to 1 in probability.

\begin{theorem}
	\label{U-1}
	Let $X_1,\dots, X_n$ be i.i.d.\ from $\rp$ and  $\ru_{n,k,\omega}$ be a generalized complete U-statistic with kernel $s(X_1,\dots,X_{k};\omega)$. Let $\theta=\E[s]$, $\zeta_{1,\omega}=\V(\E[s|X_1])$ and $\zeta_k=\V(s)$.
	If $\frac{k}{n}(\frac{\zeta_k}{k\zeta_{1,\omega}}-1)\to 0 $, then  
	\begin{equation}
		\sij^\omega_\ru /\V(\ru_{n,k, \omega})\xrightarrow[]{p}1.
	\end{equation} 
\end{theorem}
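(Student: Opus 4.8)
The plan is to reduce the claim to a law of large numbers for a sample-variance-type quantity. Writing the estimator as $\sij^\omega_\ru = \frac{k^2}{n}\hat V_1$ with $\hat V_1 := \frac{1}{n}\sum_{i=1}^n (e_i^\omega - s_0^\omega)^2$, and recalling from \cite{peng2019asymptotic} that under the hypothesis $\frac{k}{n}(\frac{\zeta_k}{k\zeta_{1,\omega}}-1)\to 0$ one has $\V(\ru_{n,k,\omega}) \sim \frac{k^2}{n}\zeta_{1,\omega}$, the target ratio factors as
\[
\frac{\sij^\omega_\ru}{\V(\ru_{n,k,\omega})} = \frac{\hat V_1}{\E[\hat V_1]}\cdot \frac{\tfrac{k^2}{n}\E[\hat V_1]}{\V(\ru_{n,k,\omega})}.
\]
It therefore suffices to prove (i) $\E[\hat V_1]\to \zeta_{1,\omega}$ (asymptotic unbiasedness), and (ii) $\V(\hat V_1)\to 0$ (concentration); together with Chebyshev's inequality these give $\hat V_1 \xrightarrow{p}\zeta_{1,\omega}$ and hence the result.

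For (i), I would start from the exact mean formula of \cref{U-sij}, extended to the generalized kernel by taking the H-decomposition of the $\omega$-averaged kernel and folding the node-level randomness into the higher-order variance components: $\E[\sij^\omega_\ru] = \sum_{j=1}^k r_j {k\choose j}^2 {n\choose j}^{-1} V_j$ with $r_j = (\frac{n-k}{n})^2 \frac{j}{1-j/n}$. Dividing by the variance decomposition $\V(\ru_{n,k,\omega}) = \sum_{j=1}^k {k\choose j}^2{n\choose j}^{-1}V_j$ (the form recalled after \cref{U-ij-2}, the node-level randomness contributing only a lower-order term) writes the normalized bias as a weighted average of the $r_j$. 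The hypothesis is equivalent to $\sum_{c\ge 2}{k\choose c}V_c = o(nV_1)$; combined with the decay ${k\choose j}^2{n\choose j}^{-1}\le {k\choose j}(k/n)^j$, the bound $r_j = O(j)$, and the elementary inequality $j(k/n)^j\le 2(k/n)^2$ for $j\ge 2$, this makes the aggregate $j\ge 2$ contribution negligible against the leading $\frac{k^2}{n}V_1$. In the nondegenerate regime the same hypothesis forces $k/n\to 0$ (since $\frac{k}{n}(\frac{\zeta_k}{k\zeta_{1,\omega}}-1)\ge \frac{k(k-1)V_2}{2nV_1}$), so $r_1 = \frac{(n-k)^2}{n(n-1)}\to 1$ and both numerator and denominator are dominated by the $j=1$ term; hence $\E[\hat V_1]\to V_1 = \zeta_{1,\omega}$.

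The substantive work is (ii). I would decompose each centered conditional mean through the H-decomposition as $e_i^\omega - s_0^\omega = L_i + R_i$, where $L_i = \frac{n-k}{n-1}(s^1(X_i) - \overline{s^1})$ is the H\'ajek (linear) part and $R_i$ gathers the higher-order kernels $s^2,\dots,s^k$ together with the $\omega$-fluctuations, so that $\hat V_1 = \frac1n\sum_i L_i^2 + \frac2n\sum_i L_iR_i + \frac1n\sum_i R_i^2$. The leading piece $\frac1n\sum_i L_i^2$ is the sample variance of nearly i.i.d.\ summands and concentrates with variance $O(1/n)$ about $(\frac{n-k}{n-1})^2 V_1$. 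The hard part---and what I expect to be the main obstacle---is controlling the fluctuations of the cross and remainder terms, i.e.\ showing $\V(\frac1n\sum_i L_iR_i) = o(\zeta_{1,\omega}^2)$ and $\V(\frac1n\sum_i R_i^2) = o(\zeta_{1,\omega}^2)$. This requires fourth-moment control of the $R_i$, whose constituent higher-order kernels grow in number and combinatorial complexity with $k$ and are correlated across $i$ through shared observations and shared subsamples. The lever is again the near-linearity hypothesis: it caps the total higher-order variance mass $\sum_{c\ge2}{k\choose c}V_c$ relative to $nV_1$, which, propagated through the subsampling weights, forces the aggregate remainder contribution to vanish. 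I expect the extra randomness $\omega$ to be comparatively benign, since each $e_i^\omega$ averages over ${n-1\choose k-1}$ independently drawn copies of $\omega$, damping its $\omega$-variance to a lower-order term absorbed into the same higher-order components.

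Finally, Chebyshev's inequality applied to $\hat V_1$ with the mean from (i) and the variance bound from (ii) gives $\hat V_1/\E[\hat V_1]\xrightarrow{p}1$, whence $\hat V_1\xrightarrow{p}\zeta_{1,\omega}$; substituting into the factorization above yields $\sij^\omega_\ru/\V(\ru_{n,k,\omega})\xrightarrow{p}1$. The two places demanding the most care are the bookkeeping that upgrades the fixed-kernel mean formula of \cref{U-sij} to the generalized ($\omega$) setting, and---above all---the fourth-moment bound on the higher-order remainder $R_i$ in the concentration step.
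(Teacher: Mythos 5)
Your skeleton---write $\sij^\omega_\ru = \frac{k^2}{n}\hat V_1$, split $e_i^\omega - s_0^\omega$ via the H-decomposition into a H\'ajek part $L_i$ plus a higher-order remainder $R_i$, verify that $\E[\hat V_1]$ is asymptotically $\zeta_{1,\omega}$ (your step (i), which is correct and is essentially the paper's \cref{U-sij} computation together with the observation that the hypothesis reads $\sum_{c\ge 2}{k\choose c}V_c = o(nV_1)$), and handle $\V(\ru_{n,k,\omega})/\frac{k^2}{n}\zeta_{1,\omega}\to 1$ separately---matches the paper's proof up through the mean computation. The genuine gap is your concentration step (ii). You propose to bound $\V\bigl(\tfrac1n\sum_i L_i^2\bigr)$, $\V\bigl(\tfrac1n\sum_i L_iR_i\bigr)$ and $\V\bigl(\tfrac1n\sum_i R_i^2\bigr)$ and apply Chebyshev. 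All three are fourth-moment quantities: they involve $\E[L_i^4]$, $\E[R_i^2R_j^2]$, and so on. But the theorem assumes only $\zeta_k=\V(s)<\infty$ plus the near-linearity condition, and both of these are statements about second moments (the $V_c$'s); no fourth moment of the kernel is assumed to exist, so the quantities you must bound may be infinite, and the lever you invoke---the cap on $\sum_{c\ge2}{k\choose c}V_c$ relative to $nV_1$---cannot yield fourth-moment control even in principle. The problem already infects your leading term: the claim that $\tfrac1n\sum_i L_i^2$ concentrates with variance $O(1/n)$ needs $\E[(s^1(X_1))^4]<\infty$, which is not available.

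The missing idea, which is how the paper's proof avoids this, is that the remainder does not need to concentrate; it only needs to vanish in $L^1$. Since the hypothesis gives $\E[T_i^2]/V_1\to 0$ (the paper's $\mathrm{R}/\mathrm{L}\to 0$ bound), Markov's inequality yields $\tfrac1n\sum_i T_i^2/V_1\xrightarrow{p}0$, and the cross term is killed by Cauchy--Schwarz at the level of expectations, $\E\bigl|s^1(X_i)T_i\bigr|\le \sqrt{\E[(s^1(X_i))^2]}\sqrt{\E[T_i^2]}=o(V_1)$; this is precisely the content of \cref{gold}, which requires second moments only. The sole place a stochastic limit theorem is needed is the pure H\'ajek term $\tfrac1n\sum_i (s^1(X_i))^2$, where a weak law of large numbers for i.i.d.\ summands with finite mean suffices, with no fourth moments (strictly, since $s^1=s^1_n$ varies with $n$, even this step needs a uniform-integrability-type condition that the paper itself glosses over; your plan would inherit the same gloss). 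Replacing your Chebyshev step with this $L^1$/Markov argument turns your outline into the paper's proof. A secondary, fixable slip: because $\zeta_{1,\omega}$ may tend to $0$, additive statements such as ``$\hat V_1\xrightarrow{p}\zeta_{1,\omega}$'' do not imply the ratio convergence the theorem asserts; your displayed goals in (ii) carry the correct $o(\zeta_{1,\omega}^2)$ normalization, but the phrasing in your opening paragraph does not.
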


\begin{corollary}
	If the conditions in \cref{U-1} are met, then 
	\begin{equation}
		\begin{aligned}
			\label{eq:corpsij}
			\ru_{n,k,\omega} \pm z_{\alpha/2}\frac{n}{n-k}\sqrt{\sij^\omega_\ru} 
			& =  \ru_{n,k,\omega} \pm z_{\alpha/2}\frac{n}{n-k}\sqrt{\sum \cov^{\omega}_*(s^*,w_i^*)^2}\\
			& =   \ru_{n,k,\omega} \pm z_{\alpha/2}\frac{k}{n-k}\sqrt{\sum(e^\omega_i-s^\omega_0)^2}
		\end{aligned}
	\end{equation} 
	provides an asymptotically valid confidence interval for $\theta$ with confidence level $1-\alpha$. Note that $\cov^\omega_*(s^*, w_i^*)$ can be defined in the same fashion as \cref{eq:psIJomega} to include the extra randomness and equals $\frac{k}{n}(e_i^\omega-s_0^\omega)$.
\end{corollary}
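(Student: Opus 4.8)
The corollary is a studentized central limit theorem, and the plan is to obtain it by combining the asymptotic normality of $\ru_{n,k,\omega}$ with the consistency of the variance estimator from \cref{U-1}, via Slutsky's theorem. Write $\sigma_n^2 = \frac{k^2}{n}\zeta_{1,\omega}$ for the limiting variance in the central limit theorem of \cite{peng2019asymptotic}: under the stated condition $\frac{k}{n}(\frac{\zeta_k}{k\zeta_{1,\omega}}-1)\to 0$, that result gives
\begin{equation*}
	\frac{\ru_{n,k,\omega}-\theta}{\sigma_n}\indist N(0,1).
\end{equation*}
The interval in \cref{eq:corpsij} studentizes this pivot by substituting $\frac{n}{n-k}\sqrt{\sij^\omega_\ru}$ for $\sigma_n$, so it suffices to show that this random scale is ratio-consistent for $\sigma_n$, i.e.
\begin{equation*}
	\Big(\tfrac{n}{n-k}\Big)^2 \sij^\omega_\ru \big/ \sigma_n^2 \inprob 1 .
\end{equation*}
Granting this, Slutsky's theorem gives $(\ru_{n,k,\omega}-\theta)\big/\big(\frac{n}{n-k}\sqrt{\sij^\omega_\ru}\big)\indist N(0,1)$, and the asserted coverage $1-\alpha$ then follows from continuity of the standard normal distribution function at $\pm z_{\alpha/2}$.

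The three displayed forms of the interval are mere algebraic restatements that I would verify first, since they use nothing beyond \cref{U-sij-pre}. That proposition gives $\cov^\omega_*(s^*,w_i^*)=\frac{k}{n}(e_i^\omega-s_0^\omega)$, whence $\sij^\omega_\ru=\sum_i \cov^\omega_*(s^*,w_i^*)^2=\frac{k^2}{n^2}\sum_i(e_i^\omega-s_0^\omega)^2$; multiplying $\sqrt{\sij^\omega_\ru}$ by $\frac{n}{n-k}$ converts the prefactor $\frac{k}{n}$ into $\frac{k}{n-k}$, yielding the last expression.

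The substantive step is the ratio-consistency display, which I would control through its bias and its fluctuation. For the bias, \cref{U-sij} gives $\E[\sij^\omega_\ru]=\sum_{j=1}^k r_j {k\choose j}^2{n\choose j}^{-1}V_j$ with $r_j=(\frac{n-k}{n})^2\frac{j}{1-j/n}$; the leading $j=1$ term equals $(\frac{n-k}{n})^2\frac{n}{n-1}\cdot\frac{k^2}{n}V_1$, so that $(\frac{n}{n-k})^2\E[\sij^\omega_\ru]$ matches $\sigma_n^2=\frac{k^2}{n}\zeta_{1,\omega}$ to leading order, the factor $\frac{n}{n-k}$ being exactly the correction that undoes the $(\frac{n-k}{n})^2$ attenuation of the pseudo-IJ. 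The condition $\frac{k}{n}(\frac{\zeta_k}{k\zeta_{1,\omega}}-1)\to 0$ is precisely what forces the $j\ge 2$ terms (whose coefficients $r_j$ are of order $j$) to be asymptotically negligible against $\frac{k^2}{n}V_1$, and it is the same condition under which $\V(\ru_{n,k,\omega})=\sum_j {k\choose j}^2{n\choose j}^{-1}V_j$ is itself asymptotically equivalent to $\sigma_n^2$, so that the consistency of \cref{U-1} can be transferred to $\sigma_n^2$. For the fluctuation I would show $\V\!\big((\frac{n}{n-k})^2\sij^\omega_\ru\big)=o(\sigma_n^4)$ and then conclude via Chebyshev together with the expectation computation.

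I expect the fluctuation bound to be the main obstacle. Since $\sij^\omega_\ru$ is a quadratic form in the strongly dependent quantities $e_i^\omega-s_0^\omega$ (each an incomplete average sharing both subsamples and the external randomness $\omega$), its variance does not factor, and the argument will require either an H-decomposition of $\sum_i(e_i^\omega-s_0^\omega)^2$ or a direct second-moment bound tracking how the growing kernel order $k$ and the randomization $\omega$ inflate the higher-order $V_j$. Handling $k$ growing with $n$, rather than fixed, is exactly where classical U-statistic variance bounds must be sharpened, and where the near-linearity condition $\frac{k}{n}(\frac{\zeta_k}{k\zeta_{1,\omega}}-1)\to 0$ will again be essential to keep the fluctuation of the variance estimator of smaller order than $\sigma_n^2$.
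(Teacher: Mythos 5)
Your overall skeleton (CLT for $\ru_{n,k,\omega}$ from \cite{peng2019asymptotic}, plus ratio-consistency of the scale, plus Slutsky) is exactly the intended argument, and your algebraic verification of the three displayed forms via \cref{U-sij-pre} is correct, as is your computation of the leading bias term from \cref{U-sij}. However, your proposal has a genuine gap at precisely the step that carries all the weight: you reduce the corollary to the ratio-consistency of the studentizing scale, announce that you would prove it by a bias computation plus a Chebyshev bound on the fluctuation of the quadratic form $\sum_i(e_i^\omega-s_0^\omega)^2$, and then explicitly leave that fluctuation bound open as ``the main obstacle.'' A proof that defers its hardest step is not a proof. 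More importantly, the obstacle you are bracing for does not exist here: the hypothesis of the corollary is that the conditions of \cref{U-1} hold, and \cref{U-1} \emph{is} the needed ratio-consistency. Its proof in \cref{app:ustatistics} establishes directly that $\sij^\omega_\ru / \tfrac{k^2}{n}\zeta_{1,\omega} \inprob 1$ (and that $\V(\ru_{n,k,\omega})/\tfrac{k^2}{n}\zeta_{1,\omega}\to 1$), so the corollary follows in one line by citing the theorem and applying Slutsky, with the factor $\tfrac{n}{n-k}$ handled as a finite-sample correction that is asymptotically innocuous. You are in effect re-proving the theorem from scratch inside the proof of its own corollary.

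There is also a methodological mismatch worth noting: the second-moment/Chebyshev route you sketch would require fourth-moment control of the dependent quantities $e_i^\omega-s_0^\omega$, which is exactly what the paper avoids. The paper's proof of \cref{U-1} instead writes $e_i-s_0$ via the H-decomposition as $\tfrac{n-k}{n}\bigl[s^{1}(X_i)+\mathrm{T}_i\bigr]$ and invokes \cref{gold}, an $L^1$/Cauchy--Schwarz argument showing that when $\sum_i\E[\mathrm{T}_i^2]$ is negligible relative to $\sum_i\E[(s^1(X_i))^2]$ (which is what the near-linearity condition $\tfrac{k}{n}(\tfrac{\zeta_k}{k\zeta_{1,\omega}}-1)\to 0$ delivers), the cross and remainder terms vanish in probability. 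That lemma is how the paper sidesteps the variance-of-a-quadratic-form computation you identify as the sticking point. One final caution: your observation that the factor $\tfrac{n}{n-k}$ undoes the $(\tfrac{n-k}{n})^2$ attenuation is a good one, but your reduction requires $\bigl(\tfrac{n}{n-k}\bigr)^2\sij^\omega_\ru/\sigma_n^2\inprob 1$, whereas \cref{U-1} as stated gives $\sij^\omega_\ru/\sigma_n^2\inprob 1$ without that factor; these are compatible only in the regime $k/n\to 0$ implicitly used in the paper's convergence step, so if you intend the corollary to cover $k/n\not\to 0$ you would need to restate and reprove the consistency with the correction factor, which neither you nor the paper actually does.
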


\noindent The $\frac{n}{n-k}$ appearing in \cref{eq:corpsij} is there to correcting for finite sample bias.

Because of their inherent computational burden in calculating ${n\choose k}$ base estimates, the complete forms of these estimators are almost never utilized in practice.  Fortunately, asymptotic normality for incomplete generalized U-statistics was also established in \cite{peng2019asymptotic}. Thus, to establish asymptotically valid confidence intervals for these incomplete counterparts, we need only establish a consistent means of estimating the asymptotic variance of $\ru_{n,k,N,\omega}$. 

Let 
\begin{equation}
	\begin{aligned}
		\label{sij-dagger}
		\widehat{\sij_\ru^\omega} &= \frac{k^2}{n^2}\sum_i [\hat{e}^\omega_i - \hat{s}^\omega_0]^2,
	\end{aligned}
\end{equation}
where 
\begin{equation}
	\hat{s}^\omega_0 = \frac{1}{N}\sum s(...;\omega), \quad \hat{e}^\omega_i = \frac{n}{Nk}\sum s(X_i,...;\omega).
\end{equation}
Here, $\sum s(...;\omega)$ denotes the sum of all kernels that make up the incomplete U-statistic, whereas $\sum s(X_i,...;\omega)$ denotes the sum of all kernels that make up the incomplete U-statistic and include $X_i$ in their respective subsamples. The following theorem shows that we can obtain a consistent estimate of $\zeta_{1,\omega}$ so long as $N$ is large enough to ensure that $\frac{n}{Nk\zeta_{1,\omega}}\to 0$.
Importantly, this means that $N$ need not be on the order of ${n\choose k}$.

\begin{theorem}
	\label{U-2}
	Let $X_1,\dots,X_n$ be i.i.d.\ from $\rp$ and $\widehat{\sij_\ru^\omega}$ be as  \cref{sij-dagger}. Let  $\zeta_{1,\omega}=\V(\E[s|X_1])$ and  $\zeta_k=\V(s)$. Assume that $\E[s]$ and $\zeta_k$ are bounded.
	Then if $\frac{k}{n} (\frac{\zeta_k}{k\zeta_{1,\omega}}-1)  \to  0 $ and %\frac{1}{(1-k/n)}
	$\frac{n}{Nk\zeta_{1,\omega}}\to 0 $, 
	we have
	\begin{equation}
		\widehat{\sij^\omega_\ru} / \frac{k^2}{n}\zeta_{1,\omega}\xrightarrow[]{p}1.
	\end{equation}
\end{theorem}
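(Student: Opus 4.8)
The plan is to split the target ratio into a ``complete U-statistic'' piece that \cref{U-1} already controls and an ``incompleteness'' piece governed by the second hypothesis $\frac{n}{Nk\zeta_{1,\omega}}\to 0$. Writing $a_i = e_i^\omega - s_0^\omega$ and $\hat a_i = \hat e_i^\omega - \hat s_0^\omega$, we have $\widehat{\sij^\omega_\ru} = \frac{k^2}{n^2}\sum_i \hat a_i^2$ and $\sij^\omega_\ru = \frac{k^2}{n^2}\sum_i a_i^2$, so that dividing by the target $\frac{k^2}{n}\zeta_{1,\omega}$ reduces the claim to showing $\frac1n\sum_i \hat a_i^2 \to \zeta_{1,\omega}$ in the ratio sense. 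I would therefore establish $\sij^\omega_\ru/(\frac{k^2}{n}\zeta_{1,\omega})\xrightarrow[]{p}1$ and $(\widehat{\sij^\omega_\ru}-\sij^\omega_\ru)/(\frac{k^2}{n}\zeta_{1,\omega})\xrightarrow[]{p}0$ separately, then combine them by Slutsky's theorem.

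For the first statement I would invoke \cref{U-1}, which under the near-linearity condition $\frac kn(\frac{\zeta_k}{k\zeta_{1,\omega}}-1)\to 0$ gives $\sij^\omega_\ru/\V(\ru_{n,k,\omega})\xrightarrow[]{p}1$. It then remains to observe that the same condition forces the deterministic ratio $\V(\ru_{n,k,\omega})/(\frac{k^2}{n}\zeta_{1,\omega})\to 1$: since $\V(\ru_{n,k,\omega}) = \sum_{j=1}^k {k\choose j}^2{n\choose j}^{-1}V_j$ has leading term $\frac{k^2}{n}\zeta_{1,\omega}$, this is precisely the statement that $\frac{k^2}{n}\zeta_{1,\omega}$ is the asymptotic variance of $\ru_{n,k,\omega}$ proved in \cite{peng2019asymptotic}. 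Multiplying the two ratios yields the first statement.

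For the incompleteness piece I would use an $L^1$ bound, since this avoids fourth moments of the kernel and uses only the assumed boundedness of $\E[s]$ and $\zeta_k$ (hence of $\E[s^2]$). Setting $\delta_i = \hat a_i - a_i$ and expanding $\hat a_i^2 - a_i^2 = \delta_i^2 + 2a_i\delta_i$, I would condition on the data and the kernel randomness $\omega$, treating only the Bernoulli selection $\rho$ (with success probability $p=N/{n\choose k}$) as random. Because $\hat e_i^\omega$ and $\hat s_0^\omega$ carry the deterministic normalization $N$, they are conditionally unbiased for $e_i^\omega$ and $s_0^\omega$, so $\E_\rho[\delta_i]=0$. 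A direct binomial-variance computation then gives $\var_\rho(\hat e_i^\omega) = \frac{n^2}{N^2k^2}\,p(1-p)\sum_{\text{sub}\ni X_i}s^2 \le \frac{n}{Nk}\cdot\frac{1}{{n-1\choose k-1}}\sum_{\text{sub}\ni X_i}s^2$, together with $\var_\rho(\hat s_0^\omega)=O(1/N)$, so that $\E[\delta_i^2]=\E[\var_\rho(\hat a_i)] = O(\frac{n}{Nk}\E[s^2]) = O(n/(Nk))$. The $\delta_i^2$ term then contributes $\frac{k^2}{n^2}\sum_i\E[\delta_i^2]=O(k/N)$, which relative to the target is $O(\frac{n}{Nk\zeta_{1,\omega}})\to 0$. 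The cross term is handled by Cauchy--Schwarz, $\sum_i\E|a_i\delta_i| \le (\sum_i\E[a_i^2])^{1/2}(\sum_i\E[\delta_i^2])^{1/2}$, where $\sum_i\E[a_i^2]=\frac{n^2}{k^2}\E[\sij^\omega_\ru]$ is of order $n\zeta_{1,\omega}$ by the expectation formula for $\sij^\omega_\ru$ (the generalized analogue of \cref{U-sij}); relative to the target this term is of order $\sqrt{n/(Nk\zeta_{1,\omega})}\to 0$. Markov's inequality upgrades this $L^1$ bound to convergence in probability.

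The main obstacle is the conditional variance bookkeeping in the incompleteness step: carefully identifying the ${n-1\choose k-1}$ subsamples that feed $\hat e_i^\omega$, tracking the covariance between the overlapping sums defining $\hat e_i^\omega$ and $\hat s_0^\omega$, and verifying that the binomial-selection variance is genuinely $O(n/(Nk))$ in a way uniform enough that taking expectations requires only $\E[s^2]$ rather than higher moments. Once this bound is secured, together with the elementary observation that $\sum_i\E[a_i^2]$ has the right order, the remaining steps are routine applications of Cauchy--Schwarz, Markov, and Slutsky.
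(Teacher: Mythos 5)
Your proposal is correct, but it takes a genuinely different route from the paper's proof. The paper never splits $\hat e_i^\omega - \hat s_0^\omega$ into the complete-ensemble quantity plus selection noise as you do. Instead, it absorbs the Bernoulli selection into the kernel itself: writing $p = N\binom{n}{k}^{-1}$, it shows $\hat e_i - \hat s_0 = e_i^\dagger - s_0^\dagger + r_i$, where $e_i^\dagger,\ s_0^\dagger$ are the \emph{complete}-U-statistic quantities for the modified kernel $s^\dagger = \frac{\rho}{p}(s-\theta)$, and $r_i = \bigl(\hat N_i/N_i - \hat N/N\bigr)\theta$ with $N_i = Nk/n$. Since $s^\dagger$ has the same H-components as $s$ except for the top one, $V_k^\dagger = V_k + \frac{1-p}{p}\zeta_k$, the paper then reruns the argument of \cref{U-1} on the dagger kernel: the inflated top component contributes an extra term $\mathrm{M} \approx \frac{n}{Nk}(1-p)\zeta_k$ to $\E[(\mathrm{T}_i^\dagger)^2]$, killed by $\frac{n}{Nk\zeta_{1,\omega}}\to 0$ and bounded $\zeta_k$; the remainder is controlled by $\E[r_i^2]\le 4\theta^2/N_i$ using bounded $\theta=\E[s]$; and \cref{gold} performs the cross-term Cauchy--Schwarz implicitly. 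Your route keeps the original kernel, invokes \cref{U-1} purely as a black box for the $a_i$ part, and handles the selection noise $\delta_i$ by explicit conditional-variance computations and an explicit Cauchy--Schwarz plus Markov and Slutsky. The two arguments use the hypotheses identically and yield the same order bounds---your $O\bigl(\frac{n}{Nk}\E[s^2]\bigr)$ noise bound, with $\E[s^2]=\zeta_k+\theta^2$, is exactly the paper's $\mathrm{M}$ plus $\E[r_i^2]$---but yours is more modular (no need to reopen the proof of \cref{U-1}), whereas the paper's dagger-kernel device is conceptually tidy in that it exhibits the incomplete estimator as a complete generalized U-statistic with one enlarged variance component, so the same decomposition and lemma serve both theorems. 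One point to make rigorous when writing yours up: the claim $\sum_i \E[a_i^2] \approx n\zeta_{1,\omega}$ requires convergence of \emph{expectations}, not just convergence in probability from \cref{U-1}; this is available either from the second conclusion of \cref{gold} as applied in the proof of \cref{U-1}, or from the generalized analogue of \cref{U-sij}, exactly as you indicate.
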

\begin{remark}
	Consider the case that $ \zeta_k/k\zeta_{1,\omega}\leq c_1$ for and $k\zeta_k\geq c_2$ for some constants $c_1$ and $c_2$. \cref{U-2} states that we need only have $n\gg k$ and $N\gg nk$ in order to ensure that $\widehat{\sij^\omega_\ru}/ \frac{k^2}{n}\zeta_{1,\omega}  \xrightarrow[]{p}1$.
\end{remark}

In the recent literature on random forests discussed above, $\sum_i \widehat{\cov}^2(s^*,w_i^*)$ is often the quantity used to estimate the variance predictions in practice.  The estimator $\widehat{\sij^\omega_\ru}$ has a strong connection to this quantity. Indeed, 
\begin{equation}
	\begin{aligned}
		\frac{k^2}{n^2}\sum_i [\hat{e}_i^\omega - \hat{s}_0^\omega]^2 
		& = \frac{k^2}{n^2}\cdot \frac{n^2}{k^2}\sum [\frac{1}{N}s(...;\omega) w_i^* - \frac{1}{N}\sum s(...;\omega) \frac{k}{n}]^2 \\
		%& =  \sum_i \left[\frac{1}{N}\sum s(...;\omega) w_i^* - \frac{1}{N}\sum s(...;\omega)\E_*[w^*_i]\right]^2 \\
		& \approx  \frac{\hat{N}^2}{N^2}  \sum_i \left[ \frac{1}{\hat{N}}\sum s(\dots;\omega)w_i^* - \frac{1}{\hat{N}} \sum s(\dots;\omega)\frac{\hat{N}_i}{\hat{N}}\right]^2\\
		& =  \frac{\hat{N}^2}{N^2} \sum_i \widehat{\cov}^2(s^*, w_i^*)	\approx \sum_i \widehat{\cov}^2(s^*,w_i^*).
	\end{aligned}
\end{equation}
where $\hat{N}_i = \sum w_i$. And since $\hat{N}$ concentrates around $N$ and $\hat{N}_i/\hat{N}$ concentrates around $\E_*[w^*_i] =\frac{k}{n}$, $\widehat{\sij_\ru^\omega} $ should be close to $\sum_i \widehat{\cov}^2(s^*,w_i^*)$.

To our knowledge, \cref{U-2} is the first result that provides a consistency of estimate the variance of predictions generated by subsampled random forests, where the number of trees in the random forest need not to be ${n\choose k}$ and the trees themselves may be randomized.

Recent work has established that $\ru_{n,k,N,\omega}$ has different asymptotic distributions depending on the number of subsamples $N$ that are employed \cite{peng2019asymptotic}. When $N\ll n/k$, $\ru_{n,k,N,\omega} \sim \mathcal{N}(0, \zeta_k/N)$, when $N = O(n/k)$, $\ru_{n,k,N,\omega} \sim \mathcal{N}(0, \frac{k^2}{n}\zeta_{1,\omega} + \frac{\zeta_k}{N})$, and when $N \gg n/k$ and $\mathcal{N}(0, \frac{k^2}{n}\zeta_{1,\omega})$.  Thankfully, regardless of the setting, there are only two variance parameters that may need to be estimated:  $\zeta_k$ and $\zeta_{1,\omega}$.  We can estimate  $\zeta_k$ simply by calculating sample variance of base learners built on non-overlapping subsamples. Based on the above arguments, $\zeta_{1,\omega}$ can be estimated by $\widehat{\sij^\omega_\ru}$. Therefore, it is guaranteed that 
\begin{equation}
	\widehat{\sij^\omega_\ru} \xrightarrow[]{p} \frac{k^2}{n}\zeta_{1,\omega} \quad \text{and}\quad \widehat{\zeta_k}\xrightarrow[]{p} \zeta_k.
\end{equation}

Interestingly, if a random forest is built with $N = O(n)$ decision trees, we cannot estimate the variance of the random forest consistently using only the trees contained in the random forest; this requires $\gg \frac{n}{k\zeta_{1,\omega}}$ decision trees. Note that $k\zeta_{1,\omega}\leq \zeta_k$ and $\zeta_k$ typically tends toward $0$ as $k$ grows. Therefore, the number of trees required for consistent variance estimation is $\gg n$.
These results shed light on the intuition established throughout the machine learning literature that it is always significantly more computationally intensive to estimate the variance of ensembles than to obtain the ensemble itself.

%%%%%%%%%%%%%%%%%%%%%%
% Discussion
%%%%%%%%%%%%%%%%%%%%%%
\section{Discussion}	
The work above provides an in-depth examination of the infinitesimal jackknife estimate of variance for resampled statistics.  We provided alternative perspectives and derivations of the estimator, perhaps most notably demonstrating its equivalence to an OLS linear regression of the bootstrap estimates on their respective sampling weights.  Ultimately we derived three alternative estimation methods under the bootstrap regime and demonstrated their equivalence when all bootstrap samples are employed.  
We also gave an in-depth examination of both the Monte Carlo and sampling bias of the IJ estimator in the bootstrap setting and proposed a novel bias-corrected estimator, ultimately providing conditions related to the asymptotic linearity of the statistic under which the IJ estimator is asymptotically unbiased.
In the latter portion of the work, we examine how these preliminary results translate outside the bootstrap setup by looking at setups where the estimator is formed via subsampling without replacement.  Here the statistics admit a familiar U-statistic structure and we derived corresponding results for generalized U-statistics, now a popular tool for analyzing modern supervised learning ensembles like random forests.  Here we also provided a formal motivation for the pseudo IJ often employed in such settings and establish its consistency under similar linearity conditions.  Importantly, for the first time, we further established consistency for the finite sample (incomplete) versions of these estimators.  In particular, this means that one needn't utilize all possible subsamples in order for the empirical $\sij_\ru$ to form a consistent estimator.

Finally, recall from the previous section that in dealing with the U-statistic estimators, we assume that the statistics are approximately linear so that when we write $\V(\ru) =\sum_{j=1}^k {k\choose j}^2{n\choose j}^{-1}V_j$, the variance is dominated by the first order term $k^2/n V_1$ and we propose an estimate of $V_1$ accordingly.  One may wonder whether, if the remaining terms are not negligible, an improved estimate can be provided by also including estimates for  $V_j$ for $j=2,\dots, k$.  While such estimates can be provided, establishing the superiority of the resulting estimator is a far more in depth undertaking that we reserve for future work.  Further detailed discussion is provided in Appendix \ref{app:higherorderIJ}.

%%%%%%%%%%%%%%%%%%%%%%%%%%%%%%%%%%%%
% Acknowledgements
%%%%%%%%%%%%%%%%%%%%%%%%%%%%%%%%%%%%
%\section*{}%Acknowledgements}
%LM was partially supported by NSF DMS-1712041.

\bibliographystyle{Chicago}

%%%%%%%%%%%%%%%%%%%%%%%%%%%%%%%%%%%%%%
% Appendix
%%%%%%%%%%%%%%%%%%%%%%%%%%%%%%%%%%%%%%
\appendix

%%%%%%%%%%%%%%%%%%%%%%%%%%%%%%%%%%%%%%
% Proofs and Calculations for $\ij_\rb$ (IJ for Bootstrap)
%%%%%%%%%%%%%%%%%%%%%%%%%%%%%%%%%%%%%%
\section{IJ via OLS Linear Regression}
\label{app:OLSconnection}

In what follows, we outline in detail the connection between OLS linear regression and the infinitesimal jackknife.  In particular, we show how the infinitesimal jackknife estimator of variance of bagged estimates derived recently by Efron \cite{Efron2014} can equivalently be obtained via a straightforward linear regression of the bootstrap estimates on their respective sampling weights.  We begin with some general preliminary results for a general resampling setup and then transition into specific findings for the bootstrap regime.

\begin{remark}
	\cref{app:OLSconnection} should be read as a standalone section.  In particular, because our goal is to cast everything in a familiar regression context, the notation used here differs slightly in some instances from that utilized in the main text and in the remaining appendices.
\end{remark}

Suppose we have a sample $Z_1, ..., Z_n \sim P$ with realized observations $\bm{z}=(z_1, ..., z_n)$ from which we construct an estimator $\hat{\theta} = s(\bm{z})$ for some parameter of interest $\theta$.  Let $\datmat = \left[ Z_1 \cdots Z_n \right]^T$ denote the original data matrix.

Consider a general resampling setup and let $\bm{z}^{*}_{1}, ..., \bm{z}^{*}_{B}$ denote $B$ resamples of the original data that are used to construct the corresponding estimates $\hat{\theta}_1, ..., \hat{\theta}_B$.  Let $w_b = (w_{b,1}, ..., w_{b,n})^T$ denote the associated resampling weights that count the number of times each observation (row) in $\datmat$ appears in each resample.  That is, $w_{b,j} = c$ indicates that the $j^{th}$ sample (row) of $\datmat$ appears exactly $c$ times in the $b^{th}$ resample.  Denote the average across these resampled estimates by
\[
\tilth = \frac{1}{B} \sum_{b=1}^{B} \hth_b
\]

\noindent so that $\tilth$ corresponds to the standard bagged estimate of $\theta$ whenever bootstrapping is the particular kind of resampling employed.  Our primary goal in the following subsection is to derive a closed form estimate for $\var(\tilth)$ in the bootstrap regime.  We begin by deriving some more general preliminary results.

First note that by the law of total variance, we can write
\begin{align*}
	\var(\tilth) &= \mathbb{E} \left[ \var \left(\tilth \big| \datmat \right) \right] + \var\left[ \mathbb{E} \left(\tilth \big| \datmat \right) \right] \\
	&\stackrel{B}{\approx} \var\left[ \mathbb{E} \left(\tilth \big| \datmat \right) \right]
\end{align*}
since $\left(\tilth | \datmat \right) \rightarrow 0$ as $B \rightarrow \infty$.  Further, we have that
\begin{align*}
	\mathbb{E} \left(\tilth \big| \datmat \right) &= \frac{1}{B} \mathbb{E} \left[ 1_{B}^{T} \left(\hth_1 \cdots \hth_B\right)^T \; \middle| \; \datmat \right] \\
	&= \frac{1}{B} 1_{B}^{T} \left[ \mathbb{E} \left\{\left(\hth_1 \cdots \hth_B\right)^T \; \middle| \; \datmat \right\} \right] \\
	&= \frac{1}{B} \sum_{b=1}^{B} \mathbb{E} \left( \hat{\theta}_b \middle| \datmat \right)
\end{align*}

\noindent so that for $\gamma_b = \mathbb{E} \left(\hat{\theta}_b \middle| \datmat \right)$, we can write 
\begin{align}
	\var\left(\tilth \right) &\stackrel{B}{\approx} \var\left( \frac{1}{B} \sum_{b=1}^{B} \mathbb{E} \left(\hat{\theta}_b | \datmat \right) \right) \nonumber \\
	&= \frac{1}{B^2} \sum_{b=1}^{B} \var(\gamma_b) + \frac{1}{B^2} \sum_{b>b^{'}} \cov(\gamma_b,\gamma_{b^{'}}) \nonumber \\
	&\stackrel{B}{\approx} \frac{1}{B^2} \sum_{b>b^{'}} \cov(\gamma_b,\gamma_{b^{'}}).
\end{align}

\noindent Note also that so long as the resampling weights are identically distributed,
\begin{align*}
	\cov\left( \gamma_b, \gamma_{b^{'}}\right) = \rho_{b,b^{'}} \sqrt{\var(\gamma_b) \var(\gamma_{b^{'}})} = \var(\gamma_b)
\end{align*}

\noindent where $\rho_{b,b^{'}}$ denotes the correlation between $\gamma_b$ and $\gamma_{b^{'}}$.  Thus,
\begin{equation}
	\label{eqn:vartilvargam}
	\var(\tilth) \approx \frac{B (B-1)}{B^2} \var(\gamma_b) \stackrel{B}{\approx} \var(\gamma_b).
\end{equation}

We close this Section with a final key observation:  in this setup, conditional on $\datmat$, for each resample $b = 1, ..., B$ we can write
\[
\hat{\theta}_b = g(w_b)
\]

\noindent for some (unknown) function $g$.  Thus, in order to investigate the properties of the resampled estimates, we need only understand how $g$ depends on $w_b$.

%%%%%%%%%%%%%%%%%%
% The bootstrap setting
%%%%%%%%%%%%%%%%%%
\subsection*{The Bootstrap Setting}

We now narrow our focus to the bootstrap regime where $B$ equally-weighted resamples of size $n$ are independently taken from the rows of $\datmat$ with replacement so that each weight vector $w_b$ is thus distributed as $Multinomial(n; \frac{1}{n}, ..., \frac{1}{n})$.  Now note that conditional on $\datmat$, for each resample $b = 1, ..., B$ we can write
\[
\hat{\theta}_b = g(w_b)
\]
so that $g(1_n)$ gives the estimate based on all original observations $\hat{\theta}$.  Importantly, this means that in order to investigate the properties of the bootstrap estimates, we need only understand how $g$ depends on the weights $w_b$.

For each bootstrap replicate $b$, we can write
\[
\hat{\theta}_b - \hat{\theta} = g(w_b) - g(1_n).
\]

Now, if we assume that $g$ is differentiable, then a first-order Taylor approximation to $\hat{\theta}_b - \hat{\theta}$ is given by 
\[
g(1_n)^{T}(w_b - 1_n).
\]

\noindent Absent this differentiability assumption, we could alternatively consider modeling the underlying relationship $g$ linearly via
\begin{equation}
	\label{eqn:linreg}
	\hat{\theta}_b - \hat{\theta} = \beta^{T} (w_b - 1_n) + \epsilon_b \quad \quad \text{ for } b = 1, ..., B.
\end{equation}

\noindent Taking this approach, the ordinary least squares estimate for $\beta$ is given by
\begin{align}
	\hat{\beta}_{\text{OLS}}  = \hbh &= \argmin \sum_{b=1}^{B} \left( \hat{\theta}_b - \hat{\theta} -  \beta^{T} (w_b - 1_n) \right)^2 \nonumber \label{eqn:betahat} \\
	&= \left( \frac{1}{B-1} X^T X \right)^{-1} \left( \frac{1}{B-1} X^T Y \right)
\end{align}

\noindent where $Y = (\hat{\theta}_1 - \hat{\theta}, ..., \hat{\theta}_B - \hat{\theta})^T$ and $X = ((w_1 - 1_n)^T, ..., (w_B - 1_n)^T)^T$ correspond to the (centered) bootstrap estimates and weights, respectively.  

Recall from (\ref{eqn:vartilvargam}) that 
\[
\var(\tilth) \approx \var(\gamma_b).
\]

\noindent where $\gamma_b = \mathbb{E} \left(\hat{\theta}_b \middle| \datmat \right)$.  Now, $\var(\gamma_b)$ can be  estimated by 
\begin{align*}
	&\frac{1}{B-1} \sum_{b=1}^{B} \left( \gamma_b - \mathbb{E}(\gamma_b) \right)^2 \stackrel{B}{\approx} \frac{1}{B-1} \sum_{b=1}^{B} \left( \gamma_b - \bar{\gamma}_b \right)^2
\end{align*}
and from our linear regression model in (\ref{eqn:linreg}),
\[
\hat{\gamma_b} = \hat{\theta} + \hat{\beta}^{T} (w_b - 1).
\]

Further, note that since $w_1, ..., w_B \iid Multinomial(n; \frac{1}{n}, ..., \frac{1}{n})$, each $w_{b,i} \sim Binom(n,\frac{1}{n})$ so that $\mathbb{E}(w_{b,i}) = 1$ for all $b = 1, ..., B$ and all $i = 1, ..., n$.  Thus, estimating the variance of $\tilth$ with the sample variance of $\{\hat{\gamma}_1, ..., \hat{\gamma}_B\}$, we have 
\begin{align*}
	\widehat{\var}(\tilth) &\approx \frac{1}{B-1} \sum_{b=1}^{B} \left( \hat{\beta}^{T} (w_b - 1) \right)^2 \\
	&= \hat{\beta}^{T} \left[ \frac{1}{B-1} \sum_{b=1}^{B} (w_b - 1) (w_b - 1)^T  \right] \hat{\beta}.
\end{align*}

\noindent Looking at the middle term, we have
\begin{align*}
	\left[ \frac{1}{B-1} \sum_{b=1}^{B} (w_b - 1) (w_b - 1)^T  \right] &\stackrel{B}{\approx} \mathbb{E} \left( (w_1 - 1) (w_1 - 1)^T \right) \\
	&= \cov(w_1 - 1) \\
	&= I_n - \frac{1}{n}1_{n \times n} \\
	&\stackrel{n}{\approx} I_n
\end{align*}

\noindent and thus, 
\begin{equation}
	\label{eqn:vartilthbeta}
	\widehat{\var}(\tilth) \approx \hat{\beta}^T I_n \hat{\beta} = \sum_{j=1}^{n} \hat{\beta}_{j}^{2}.
\end{equation}

\noindent Thus, in order to produce our estimate for $\var(\tilth)$, it remains only to work out the solution to $\hat{\beta}$ given in (\ref{eqn:betahat}).  

Let's begin by considering the expectation of the inverse of the first term in (\ref{eqn:betahat}).  Observe that for $i \neq j$
\begin{align*}
	\left[ \mathbb{E} \left( \frac{1}{B-1} X^TX \right) \right]_{i,j} &= \frac{1}{B-1} \mathbb{E} \left( \sum_{k=1}^{B} (w_{k,i} - 1)(w_{k,j} - 1) \right) \\
	&= \frac{1}{B-1} \sum_{k=1}^{B} \mathbb{E} (w_{k,i} - 1)(w_{k,j} - 1) \\
	&= \frac{1}{B-1} \sum_{k=1}^{B} \cov(w_{k,i}, w_{k,j}) \\
	&= \frac{B}{B-1} \cov(w_{1,i}, w_{1,j}) \\
	&= \frac{B}{B-1} \left( -n \right)  \left( \frac{1}{n} \right)  \left( \frac{1}{n} \right) \\
	&= \left( \frac{B}{B-1} \right) \frac{-1}{n} 
\end{align*}

\noindent where the third equality comes from the fact that each $w_{b,i} \sim Binom(n,\frac{1}{n})$ and the fourth and fifth equalities follow from $w_1, ..., w_B \iid Multinomial(n; \frac{1}{n}, ..., \frac{1}{n})$.  For the diagonal elements ($i=j$), the covariance terms above become variance terms so that 
\[
\frac{B}{B-1} \cov(w_{1,i}, w_{1,j}) = \frac{B}{B-1} \var(w_{1,i}) = \left( \frac{B}{B-1} \right) \frac{n-1}{n}
\]

\noindent and thus, in matrix form, we have 
\[
\mathbb{E}\left( \frac{1}{B-1} X^T X \right) = -\frac{1}{n} 1_{n \times n} + I_n.
\]

\noindent Finally, note that

\begin{equation}
	\label{eqn:xtxI}
	\left(\frac{1}{B-1} X^T X \right) \stackrel{B}{\approx} \mathbb{E} \left( \frac{1}{B-1} X^T X \right) \stackrel{n}{\approx} I_n
\end{equation}

\noindent and thus
\[
\hbh = \left( \frac{1}{B-1} X^T X \right)^{-1} \left( \frac{1}{B-1} X^T Y \right) \approx I_{n}^{-1} \left( \frac{1}{B-1} X^T Y \right) = \frac{1}{B-1} X^T Y.
\]

\noindent Now, 
\begin{align*}
	\frac{1}{B-1} X^T Y  &= \frac{1}{B-1} \bigg( (w_1 - 1_n)^T, ..., (w_B - 1_n)^T \bigg)    \bigg( (\hat{\theta}_1 - \hat{\theta}), ..., (\hat{\theta}_B - \hat{\theta}) \bigg)^T \\
	&= \bigg( \frac{1}{B-1} \sum_{b=1}^{B} (w_{b,1} - 1)(\hat{\theta}_b - \hat{\theta}), \; ... \;, \frac{1}{B-1} \sum_{b=1}^{B} (w_{b,n} - 1)(\hat{\theta}_b - \hat{\theta}) \bigg)^T
\end{align*}

\noindent so that elementwise, 

\[
\hbh_j = \frac{1}{B-1} \sum_{b=1}^{B} (w_{b,j} - 1) (\hth_b - \hth)
\]

\noindent and since $\mathbb{E}(w_{b,j}) = 1$, $\hbh_j$ is effectively the sample covariance of $(w_{1,j}, ..., w_{B,j})$ and $(\hth_1, ..., \hth_B)$.  Denoting this by sample covariance by $\widehat{\cov}_{j}$ and putting this together with (\ref{eqn:vartilthbeta}), we have 
\begin{equation}
	\label{eqn:vartilthfinal}
	\widehat{\var}(\tilth) \approx \sum_{j=1}^{n} \hat{\beta}_{j}^{2} = \sum_{j=1}^{n} \widehat{\cov}_{j}^{2}
\end{equation}

\noindent which coincides exactly with the infinitesimal jackknife variance estimate given by Efron in \cite{Efron2014}.

%%%%%%%%%%%%%%%%%%%%%%%%%%%%%%%%%%%%%%
% Proofs and Calculations for $\ij_\rb$ (IJ for Bootstrap)
%%%%%%%%%%%%%%%%%%%%%%%%%%%%%%%%%%%%%%
\section{Proofs and Calculations for $\ij_\rb$ (IJ for Bootstrap)}
\label{app:bootstrap}

\noindent \textbf{Proof of \cref{3rivers}:}

\begin{enumerate}
	\item By definition, 
	\begin{equation*}
		\begin{aligned}
			\E_*[s^* w_j^*] & = \sum_{w_1^*+\dots+w_n^* = n}p(w_1^*,\dots, w_n^*) s(X_1^*,\dots, X_n^*) w_j^* \\
			& = \sum_{\substack{w_j^*\geq 1 \\w_1^*+\dots+w_n^* = n}}\frac{(n-1)!}{w_1^*\dots ((w_j^*-1)!)\cdots (w_n^*)!}\frac{1}{n^{n-1}} s(X_1^*,\dots, X_n^*)\\
			& = \E_*[s(X_1^*,\dots, X_n^*)|X_1^*=X_j] \\
			& = e_j.
		\end{aligned}
	\end{equation*}
	
	\item Conditional on the data, knowing $X_1^*,\dots, X_n^*$ is equivalent to knowing $w_1^*,\dots, w_n^*$. Therefore, $l^*$ can be also viewed as the  projection of $s^*$ onto the linear space spanned by $X_1^*,\dots, X_n^*$. Then we have
	\begin{equation*}
		\begin{aligned}
			l^* &= \sum_i ( \E_*[s^*|X_i^*] - \E_*[s^*]) \\
			&= \sum_i \sum_j (\E_*[s^*|X_i^*=X_j] - \E_*[s^*]) \bone_{\{X_i^*=X_j\}} \\
			& = \sum_i \sum_j(e_j-s_0)\bone_{\{X_i^*=X_j\}} \\
			& = \sum_j \sum_i (e_j-s_0)\bone_{\{X_i^*=X_j\}} \\
			& = \sum_j w_j^* (e_j-s_0)
		\end{aligned}
	\end{equation*}
	as desired.
	
	\item By 1 above, $\mathrm{IJ_B} = \sum_j \cov^2_*(s^*, w_j^*) = \sum_j (\E_*[s^*w_j^*] -\E_*[s^*]\E_*[w_j^*] )^2= \sum_j (e_j-s_0)^2 $. By 2, we have $\V_*(l^*) = \V_*(\sum_j w_j^*(e_j-s_0))  =\sum_j (e_j-s_0)^2$.
	Thus, $\V_*(l^*) =  \mathrm{JK_B} = \mathrm{IJ_B}$.
\end{enumerate} \hfill $\blacksquare$ \\

\noindent\textbf{Calculations  for sample maximum:} Consider $s=\max\{X_1,...,X_n\}$, where $X_1,\dots, X_n$ are uniformly distributed in $[0,1]$. For the order statistics $X_{(1)}\leq X_{(2)}\leq \dots \leq X_{(n)}$, we have 
\begin{equation}
	\label{order-ij}
	\\cov(X_{(i)},X_{(j)}) = \frac{i(n-j+1)}{(n+1)^2(n+2)} \quad\text{and}\quad  \E[X_{(i)}X_{(j)}] = \frac{i(j+1)}{(n+1)(n+2)}.
\end{equation}
Note that 
\begin{equation*}
	\E_*[s^*] = s_0 =  \sum_{i=1}^n X_{(i)} p^n_i,
\end{equation*}
where $p^n_i = q^n_i - q^n_{i-1}$ and $q^n_i=\left(\frac{i}{n}\right)^n$ for $i= 1, \dots, n$.  
Thus, 
\begin{equation}
	\V(\E_*[s^*]) = v^T \mathrm{A} v
\end{equation}
where $	\A = cov(\U) = \left[\frac{i(n+1-j)}{(n+1)^2(n+2)}\right]_{ij}$ 
and $v = (p_1^n, \cdots, p_n^n)$. Let 
\begin{equation*}
	\begin{aligned}
		\tilde{e}_i
		& =  \sum_{j=I+1}^n X_{(j)} p^{n-1}_j + X_{(i)} q^{n-1}_i ,\quad \text{where } q_i^{n-1} = \left(\frac{i}{n}\right)^{n-1}.
	\end{aligned}
\end{equation*}
We have $\V_*(l^*)=  \sum_{i=1}^n (e_i-s_0)^2  = \sum_{i=1}^n (\tilde{e}_i-s_0)^2$.
Thus, 
\begin{equation}
	\begin{aligned}
		\E[\V_*(l^*)] 
		& = \sum(v_i-v)^T\B (v_i-v)
	\end{aligned}
\end{equation}
where $\B = \E[\U\U^T] = 
\begin{bmatrix}
	\frac{i(j+1)}{(n+1)(n+2)}
\end{bmatrix}_{ij}$ and $v_i = (\cdots,  0, \cdots,q_i^{n-1},\cdots,p_j^{n-1}, \cdots)$.
Next, we have 
\begin{equation}
	\begin{aligned}
		v^T\mathrm{A}v =
		& = \frac{1}{(n+1)^2(n+2)}  \sum_i \sum_j p_i^np_j^n {i(n+1-j)}\\
		& = \frac{1}{(n+1)^2(n+2)}\left(\sum_ii\cdot p_i^n\right) \left(\sum_j (n-j+1)p_j^n\right)\\
		& =  \frac{1}{(n+1)^2(n+2)} \left(\sum_i i\cdot p_i^n\right) \left(n+1-\sum_i i\cdot p_i^n\right)\\
		& =  \frac{1}{(n+1)^2(n+2)} (n - \sum_j (\frac{j-1}{n})^n) )(1 + \sum_j (\frac{j-1}{n})^n))
	\end{aligned}
\end{equation}
by the fact that 
\begin{equation}
	\begin{aligned}
		\sum_{i=1}^n i\cdot p_i^n 
		= \sum_{i=1}^n iq^n_i  - \sum_{i=0}^{n-1} (i+1)q_{i}^n
		& = n - \sum_{i=0}^{n-1} q_{i}^n.
	\end{aligned}
\end{equation}
Now, let $\mathbf{e}_n= [1,2,\cdots, n]^T$. Then 
\begin{equation}
	\begin{aligned}
		(v_i-v)^T\B	(v_i-v)& = \frac{ 	(v_i-v)^T
			\mathbf{e}_n
			\cdot
			(\mathbf{e}_n^T + 1_n^T)\mathrm{V}}{(n+1)(n+2)} \\
		&   = \frac{	(v_i-v)
			\mathbf{e}_n
			\cdot
			\mathbf{e}_n^T	(v_i-v)}{(n+1)(n+2)} \\
		& = \frac{1}{(n+1)(n+2)}  \sum_i \left[(n-\sum_{j=I}^{n-1}(\frac{j}{n})^{n-1}) - (n-\sum_{j=0}^{n-1} (\frac{j}{n})^n)\right]^2 \\
		& = \frac{1}{(n+1)(n+2)}  \sum_i \left[\sum_{j=1}^{n} (\frac{j-1}{n})^n - \sum_{j=i+1}^{n}(\frac{j-1}{n})^{n-1}\right]^2.
	\end{aligned}
\end{equation}
In summary, we have 
\begin{equation}
	\begin{aligned}
		\frac{\E[\V_*(l^*)]}{\V(\E_*[s^*])} 
		& =  \frac{(n+1)\sum_i [ \sum_{j=1}^{n} (\frac{j-1}{n})^n - \sum_{j=i+1}^{n}(\frac{j-1}{n})^{n-1}]^2}{(n-\sum_j  (\frac{j-1}{n})^n) (1+ \sum_j (\frac{j-1}{n})^n)}\\
		& \to c  \in [0.24,0.25]\quad \text{as } n \to \infty. 
	\end{aligned}
\end{equation}

\noindent\textbf{Proof of \cref{BOOT-2}: } Note that  $\E[\V_*(l^*)] = (n-1)\E[e_1^2 - e_1e_2]$ and  $\V(\E_*[s^*]) =  \frac{1}{n}\V(e_1) + \frac{n-1}{n}{\\cov}(e_1,e_2)$. Let $\rho = {\\cov}(e_1, e_2) /\V(e_1) $.  We have 
\begin{equation}
	\begin{aligned}
		\E[\V_*(l^*)] / \V(\E_*[s^*]) & =\frac{(n-1)(1-\rho)}{{1}/{n}+ {(n-1)}/{n}\cdot \rho} \\
		& = n \frac{1-\rho}{1/(n-1) + \rho}.
	\end{aligned}
\end{equation}
Now, consider the Taylor expansion of $f(\rho) =  \frac{n(1-\rho)}{1/(n-1) + \rho}$.  We have
\begin{equation}
	\begin{aligned}
		f(\rho) 
		& = n \left(-1 + \frac{1}{1- (n-1)/n\cdot(1-\rho) }\right)\\
		& = n(-1+\frac{1}{1-r}) \\ 
		& = n \left(-1 + 1  + r + r^2 + r^3 +\dots \right) \quad (|r|<1)\\ 
		&  =  n(r + r^2 + r^3 +  \dots) \quad (|r<1|),
	\end{aligned}
\end{equation}
where $r = (n-1)/n \cdot (1-\rho)$. 
Therefore, only if $r= \frac{1}{n} + o(\frac{1}{n})$, then $f(\rho(r)) \to 1 $ as $n\to \infty$. In particular, 
\begin{equation}
	\begin{aligned}
		r = \frac{1}{n} + o\left( \frac{1}{n} \right) 
		& \iff 1-\rho = \frac{1}{n}  + o \left(\frac{1}{n} \right).
	\end{aligned}
\end{equation}
Hence  $\lim_{n\to \infty}f(\rho) =  1$ if and only if $\lim_{n\to \infty}n(1-\rho) = 1$.  
Thus,  $\ij_\rb$ is an asymptotically unbiased estimator of $\V(\E_*[s^*])$ if and only if $1-\rho = {1}/{n} + o(1/n)$.  \hfill $\blacksquare$ \\

%%%%%%%%%%%%%%%%%%%%%%%%%%%
% Appendix B
%%%%%%%%%%%%%%%%%%%%%%%%%%%

\section{Proofs and Calculations for $\ij_\ru$ and $\sij_\ru$  (IJ for U-statistics)}
\label{app:ustatistics}

\noindent \textbf{Proof of \cref{U-ij-1}:}
When subsampling without replacement, according to the weight of each sample, the probability of $(x_1,\dots, x_k)$ being selected is 
\begin{equation}
	\begin{cases}
		\sum_{i_1,\dots, i_k} \frac{\P_n(x_{i_1})}{1}\times \frac{\P_n(x_{i_2})}{1-\P_n(x_{i_1})}\times \cdots \times \frac{\P_n(x_{i_k})}{1-\sum_{j=1}^{k-1}\P_n(x_{i_j})}, & x_1,\dots x_k \in \D_n \text{ and are distinct}. \\
		0, & \text{otherwise}.
	\end{cases}
\end{equation}
Note that any subsampling with a general re-weighting scheme can be derived similarly. Consider $f((1-\epsilon)\P_n + \epsilon \delta_{X_i} )$ and let $\delta= 1-\epsilon$. We first provide the probability of obtaining $(x_1,\dots, x_k)$. On one hand, if $X_i\not \in (x_1,x_2,\dots, x_k)$, then 
\begin{equation}
	p(x_1,x_2,\dots, x_k)= p_{0} = \left[\frac{\delta}{n}\cdot \frac{\delta}{(n-\delta)}\cdots\frac{\delta}{(n-(k-1)\delta)}\right]\times k!.
\end{equation}
On the other hand, if $X_i \in (x_1,\dots, x_k)$, then $p(x_1,x_2,\dots, x_k) = p_1=\sum_{i=0}^{k-1} q_i$, where 
\begin{equation}
	\begin{aligned}
		q_0 &= \left[\frac{(n-(n-1)\delta)}{n}\cdot \frac{1}{n-1}\cdots \frac{1}{n-k+1} \right]\times (k-1)! \\
		q_1 & = \left[\frac{\delta}{n}\cdot \frac{n-(n-1)\delta}{n-\delta}\cdot \frac{1}{n-2}\cdots \frac{1}{n-k+1}\right]\times (k-1)!\\
		\vdots \\
		q_{k-1} & = \left[\frac{\delta}{n}\frac{\delta}{n-\delta}\cdots \frac{\delta}{n-(k-2)\delta}\cdot \frac{n-(n-1)\delta}{n-(k-1)\delta}\right] \times (k-1)!.
	\end{aligned}
\end{equation}
Thus, 
\begin{equation*}
	f((1-\epsilon)\P_n + \epsilon \delta_{X_i} ) = \sum_{(n,k)} s(X_{i_1}, \dots, X_{i_k}) (p_0 \one_{i \not \in \{i_1,\dots, i_k\}} + p_1\one_{i\in \{i_1,\dots, i_k\}}),
\end{equation*}
where the sum is taken over all ${n\choose k}$ of subsamples of size $k$.
\[
\frac{1}{p}p_0'(\delta)|_{\delta=1}  =  -\left[\frac{0}{n}+\frac{1}{n-1}+\cdots + \frac{k-1}{n-(k-1)}\right]-k
\]
and
\[
\begin{aligned}
	\frac{1}{p}p_1' & = \frac{1}{p}\sum_{j=0}^{k-1}q_j'|_{\delta=1} \\
	& = \frac{1}{k}\sum_{j=0}^{k-1}\left[(n-j-1) - \left[\frac{0}{n}+\frac{1}{n-1}+ \frac{2}{n-2}+\cdots \frac{j}{n-j}\right]\right] \\
	& =-\frac{1}{k}\left[ \frac{0\cdot k}{n}+ \frac{1\cdot (k-1)}{n-1}+ \cdots + \frac{(k-1)\cdot 1}{n-(k-1)}\right] -\frac{k+1}{2} +n.
\end{aligned}
\]
Putting all  together,  we have 
\begin{equation}
	\begin{aligned}
		\rd_i & =\lim_{\delta\to 1}\frac{f(\delta\P_n +(1-\delta)\delta_{X_i})-f(\P_n)}{1-\delta}\\
		& = \sum_{(n,k)}(p'_0\one_{w_i^*=0} + p_1'\one_{w_i^*=1})s(X_{i_1},\dots, X_{i_k}) \\
		& =  \sum_{(n,k)}p\left[\frac{p'_0}{p}+(\frac{p_1'}{p}-\frac{p_0}{p}'){w_i^*}\right]s(X_{i_1},\dots, X_{i_k}) \\
		& = \frac{k}{n}(\frac{p'_1}{p}-\frac{p'_0}{p})e_i + \frac{p'_0}{p}s_0,
	\end{aligned}
\end{equation}
where $p ={n\choose k}^{-1}$, $e_i = \E_*[s^*|X_1^*=X_i]$ and $s_0 = \E_*[s^*]$. And $*$ refers to the procedure of subsampling without replacement. Then the infinitesimal jackknife estimate for U-statistic is 
\begin{equation}
	\label{IJ-U}
	\begin{aligned}
		\ij_\ru&= \frac{1}{n^2}\sum _{j=1}^n \left[\frac{k}{n}(\frac{p'_1}{p}-\frac{p'_0}{p})e_j + \frac{p'_0}{p}s_0\right]^2\\
		& = \frac{k^2}{n^2} \sum_{j=1}^n \left[\frac{p'_1 -p'_0}{np}e_j + \frac{p'_0}{kp}  s_0\right]^2\\
		& = \frac{k^2}{n^2}\sum_{j=1}^n [\alpha e_j - \beta s_0]^2
	\end{aligned}
\end{equation}
where
\begin{equation}
	\label{alpha}
	\begin{aligned}
		\alpha &= (p_1'-p_0')/(np) = 1 + \frac{1}{n}\left\{\frac{k-1}{2} - \frac{1}{k}\sum_{j=0}^{k-1}\frac{j^2}{(n-j)}\right\}\\
	\end{aligned},
\end{equation}
and 
\begin{equation}
	\label{beta}
	\begin{aligned}
		\beta &=-p_0'/(kp) = 1 + \frac{1}{k}\sum_{j=0}^{k-1}\frac{j}{n-j}.
	\end{aligned}
\end{equation}  \hfill $\blacksquare$ \\

\noindent \textbf{Proof of \cref{U-ij-2}:}
By definition, 
\begin{equation}
	\begin{aligned} 
		(\alpha e_1- \beta s_0) 
		& =
		-\beta {n\choose k}^{-1}\sum s(X_{i_1},\dots, X_{i_k}; \not \exists 1) \\
		& ~~~~+ \left( \frac{\alpha \cdot (k-1)!}{(n-1)\dots (n-k+1)} -  \frac{\beta \cdot (k-1)!k}{n\cdots (n-k+1)}\right)\sum s(X_{i_1},\dots, X_{i_k}; \exists 1)
		\\
		& = 
		-(1-\frac{k}{n})\beta {n-1\choose k}^{-1}\sum s(X_{i_1},\dots, X_{i_k};\not \exists 1) \\
		& ~~~~+ (\alpha-\frac{k}{n}\beta ){n-1 \choose k-1}^{-1}\sum s(X_{i_1},\dots, X_{i_k}; \exists 1).
	\end{aligned}
\end{equation}
Note that according to H-decomposition, $$s(x_1,\dots, x_k)=\E[s] +\sum_{j=1}^k \sum_{i_1,\dots, i_j\in \{1,\dots ,j\}} s^{j}(x_{i_1},\dots, x_{i_j}).$$ Then 
\begin{equation}
	\begin{aligned}
		(\alpha e_1- \beta s_0) 
		& = (\alpha-\beta)\theta - (1-\frac{k}{n})\beta \sum_{j=1}^k{k\choose j}{n-1\choose j}^{-1}\sum s^{j}(X_{i_1},\dots, X_{i_j}; \not \exists 1)  \\
		& ~~~~+(\alpha -\frac{k}{n}\beta)  \sum_{j=1}^{k-1}{k-1\choose j}{n-1\choose j}^{-1}\sum s^{j}(X_{i_1},\dots, X_{i_j}; \not\exists 1) \\
		&  ~~~~+ (\alpha -\frac{k}{n}\beta) \sum_{j=1}^k {k-1\choose j-1}{n-1\choose j-1}^{-1} \sum s^{j}(X_{i_1},\dots,X_{i_j}; \exists 1)
		\\
		& := (\alpha-\beta)\theta + A_n+B_n,
	\end{aligned}
\end{equation}
where 
\begin{equation*}
	\begin{aligned}
		A_n 
		& =    \sum_{j=1}^{k}\left[(\alpha -\frac{k}{n}\beta) {k-1\choose j}{n-1\choose j}^{-1} - (1-\frac{k}{n})\beta {k\choose j}{n-1\choose j}^{-1} \right]\sum s^{j}(X_{i_1},\dots, X_{i_j}; \not \exists 1)
	\end{aligned}
\end{equation*}
and 
\begin{equation*}
	\begin{aligned}
		B_n 
		& =(\alpha - \frac{k}{n}\beta) \sum_{j=1}^k {k-1\choose j-1}{n-1\choose j-1}^{-1}\sum s^{(j)}(X_{i_1},\dots,X_{i_j} ;\exists 1).
	\end{aligned}
\end{equation*}
Thus, we have 
\begin{equation}
	\begin{aligned}
		\E[A^2_n] 
		& = \sum_{j=1}^k  \left[(\frac{k}{j}-1)\alpha + (\frac{k}{n}-\frac{k}{j})\beta\right]^2{k-1\choose j-1}^2 {n-1\choose j}^{-1}V_j \\
		\E[B^2_n] &=(\alpha - \frac{k}{n}\beta)^2 \sum_{j=1}^k {k-1\choose j-1}^2 {n-1\choose j-1}^{-1}V_j,
	\end{aligned}
\end{equation}
where $V_j = \V(s^j)$. Since $A_n$ and $B_n$ are  uncorrelated and have mean zero, we have 
\begin{equation*}
	\begin{aligned}
		&~~~~ \E\left[ (\alpha e_1-\beta s_0)^2\right]  \\
		& = \E[A^2_n] + \E[B^2_n] + (\alpha-\beta)^2\theta^2 \\
		& =  (\alpha-\beta)^2\theta^2 +  \sum_{j=1}^{k}{k-1\choose j-1}^2 \Lambda(j) V_j,
	\end{aligned}
\end{equation*}
where $\Lambda(j) =  \left[   \left[(\frac{k}{j}-1)\alpha + (\frac{k}{n}-\frac{k}{j})\beta\right]^2{n-1\choose j}^{-1} + (\alpha -\frac{k}{n}\beta)^2{n-1\choose j-1} ^{-1} \right]$, for $j = 1, \cdots, k$.  Therefore,
\begin{equation}
	\begin{aligned}
		\E[\ij_\ru] &= \frac{k^2}{n^2}\sum \E[(\alpha e_j-\beta s_0)^2] \\
		& = \frac{k^2}{n}\sum_{j=1}^k {k-1\choose j-1}^2 \Lambda(j)V_j + \frac{k^2}{n}(\alpha-\beta)^2\theta^2.
	\end{aligned}
\end{equation}
Recall that
\begin{equation}
	\V(\ru) = \sum_{j=1}^k{k\choose j}^2{n\choose j}^{-1}V_j.
\end{equation}
We consider the ratio of the coefficient of $V_j$ in $\E[\mathrm{IJ}_\ru]$ and that in $\V(\ru)$ and obtain 
\begin{equation}
	\begin{aligned}
		r_j & = \frac{k^2}{n} \Lambda(j) {k-1\choose j-1}^2 {k\choose j}^{-2}{n\choose j} \\
		& = \frac{k^2}{n}  \frac{j^2}{k^2} \Lambda(j){n\choose j}\\
		& = \frac{(n-k)^2}{n^2}\left[\frac{j}{1-j/n} \alpha^2\right]+\frac{k^2}{n} (\alpha-\beta)^2 \\
	\end{aligned}
\end{equation}
for  $j=1,\dots, k$. \hfill $\blacksquare$ \\

\noindent \textbf{Proof of \cref{U-sij-pre}:}
By definition, 
\begin{equation}
	\begin{aligned}
		\cov_*(s^*, w_j^*) & = \sum_{w_1^*+\dots+w_n^* = k}p(w_1^*,\dots, w_n^*) [s^*-s_0]w_j^* \\
		& =  \sum_{w_1^*+\dots+w_n^* = k}p(w_1^*,\dots, w_n^*) s^* w_j^*  - \frac{k}{n}s_0 \\
		& = \frac{k}{n}\sum_{w_j^*=1, w_1^*+\dots+w_n^* = k}  \frac{(k-1)!}{(n-1)\cdots (n-k+1)} s^* - \frac{k}{n}s_0 \\
		& = \frac{k}{n}\left[\E_*[s(X_1^*,\dots, X_k^*)|X_1^*=X_j] - s_0\right]\\
		& = \frac{k}{n}(e_j-s_0).
	\end{aligned}
\end{equation}
It follows that $\sij = \sum \cov_*^2(s^*, w_j^*) =  \frac{k^2}{n^2}\sum (e_j-s_0)^2.$
\hfill $\blacksquare$ \\

To proof and \cref{U-1} and \cref{U-2}, we establish the following lemma. 
\begin{lemma}
	\label{gold}
	Suppose that $\sum X_i^2\xrightarrow[]{p}1$, $\sum \E[X_i^2] \to 1$, and $\sum_{i=1}^n \E[Y^2_i] \to 0$, then 
	\begin{equation}
		\sum  [X_i+Y_i]^2 \xrightarrow[]{p} 1 \quad \text{and} \quad \E[\sum (X_i+Y_i)^2] \to 1.
	\end{equation}
\end{lemma}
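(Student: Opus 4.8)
The plan is to expand the square and control the two extra terms that appear relative to $\sum_i X_i^2$. Writing
\[
\sum_i (X_i + Y_i)^2 = \sum_i X_i^2 + 2\sum_i X_i Y_i + \sum_i Y_i^2,
\]
the first term already converges to $1$ in probability by hypothesis, so it remains only to show that the cross term and the pure $Y$ term are asymptotically negligible, both in probability and in expectation.

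First I would dispatch the $\sum_i Y_i^2$ term. Since $\sum_i Y_i^2 \geq 0$ and $\E[\sum_i Y_i^2] = \sum_i \E[Y_i^2] \to 0$, Markov's inequality gives $\sum_i Y_i^2 \xrightarrow[]{p} 0$ immediately. For the cross term I would apply the Cauchy--Schwarz inequality,
\[
\Bigl| \sum_i X_i Y_i \Bigr| \leq \sqrt{\sum_i X_i^2}\,\sqrt{\sum_i Y_i^2},
\]
where the first factor converges in probability to $1$ (hence is bounded in probability) and the second converges in probability to $0$ by the previous step, so the product, and therefore $\sum_i X_i Y_i$ itself, converges to $0$ in probability. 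Combining the three pieces via Slutsky's theorem yields $\sum_i (X_i+Y_i)^2 \xrightarrow[]{p} 1$.

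For the expectation statement I would again expand and treat the cross term with Cauchy--Schwarz, now applied both pointwise and across the sum:
\[
\Bigl| \sum_i \E[X_i Y_i] \Bigr| \leq \sum_i \sqrt{\E[X_i^2]}\,\sqrt{\E[Y_i^2]} \leq \sqrt{\sum_i \E[X_i^2]}\,\sqrt{\sum_i \E[Y_i^2]}.
\]
Because $\sum_i \E[X_i^2] \to 1$ stays bounded while $\sum_i \E[Y_i^2] \to 0$, this bound vanishes, and combined with $\sum_i \E[X_i^2] \to 1$ and $\sum_i \E[Y_i^2] \to 0$ it follows that $\E\bigl[\sum_i (X_i+Y_i)^2\bigr] \to 1$.

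The argument is essentially routine; the only point requiring care is that $\{X_i\}$ and $\{Y_i\}$ form triangular arrays (the number of summands grows with $n$), so all bounds must be phrased in terms of the aggregate quantities $\sum_i X_i^2$, $\sum_i \E[X_i^2]$, and $\sum_i \E[Y_i^2]$ rather than individual summands. The two Cauchy--Schwarz steps are precisely what convert the per-coordinate smallness of the $Y_i$ into control of the aggregate cross term, and I anticipate no other obstacle.
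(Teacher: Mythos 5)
Your proof is correct, and its skeleton matches the paper's: expand $\sum_i (X_i+Y_i)^2$ into $\sum_i X_i^2 + \sum_i Y_i^2 + 2\sum_i X_iY_i$, kill $\sum_i Y_i^2$ via Markov's inequality (equivalently, $L_1$ convergence), and control the cross term with Cauchy--Schwarz. The one genuine difference is where Cauchy--Schwarz is applied for the in-probability claim. The paper applies it at the level of expectations, $\E\left[\left|\sum_i X_iY_i\right|\right] \leq \sqrt{\sum_i \E[X_i^2]}\sqrt{\sum_i \E[Y_i^2]} \to 0$, so the cross term converges to zero in $L_1$ and hence in probability; this single bound then also disposes of the cross term in the expectation claim. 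You instead apply Cauchy--Schwarz pathwise, $\left|\sum_i X_iY_i\right| \leq \sqrt{\sum_i X_i^2}\sqrt{\sum_i Y_i^2}$, and conclude by an $O_p(1)\cdot o_p(1)$ argument, then carry out the expectation-level Cauchy--Schwarz separately for the second claim. The paper's route is slightly more economical, since one bound serves both conclusions; your route has the minor advantage that the in-probability conclusion uses only $\sum_i X_i^2 \xrightarrow[]{p} 1$ and $\sum_i \E[Y_i^2]\to 0$, with the hypothesis $\sum_i \E[X_i^2]\to 1$ needed only for the expectation statement, so that half of the lemma holds under weaker assumptions. Both arguments are complete, and your remark about treating the summands as triangular arrays is well taken.
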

\begin{proof}
	Note that 
	\begin{equation}
		\begin{aligned}
			\sum (X_i+Y_i)^2 =\sum X_i^2 + \sum Y_i^2 + 2\sum X_iY_i.
		\end{aligned}
	\end{equation}
	Since $\sum \E[Y_i^2]\to 0$, we have $\sum Y_i^2 \xrightarrow[]{l_1} 0$, which implies  that  $\sum Y_i^2 \xrightarrow[]{p}0$. On the other hand, by Cauchy–Schwarz inequality, we have 
	\begin{equation}
		\begin{aligned}
			\E[|\sum_iX_iY_i|] &\leq \sum \sqrt{\E[X_i^2]}\sqrt{\E[Y_i^2]} \\
			& \leq \sqrt{\sum \E[X_i^2]} \sqrt{\sum \E[Y_i^2]}\\
			& \to 1\times  0 \\
			& = 0.
		\end{aligned}
	\end{equation} 
	Thus, $\sum X_iY_i\xrightarrow[]{l_1}0$, which implies that $\sum X_iY_i\xrightarrow[]{p}0$. Therefore, $\sum (X_i+Y_i)^2 \xrightarrow[]{p}0$ by Slutsky's lemma. Furthermore, since $\E[\sum X_iY_i] \to 0$, we have $\E[\sum (X_i+Y_i)^2] \to 1$.
\end{proof}

\noindent \textbf{Proof of \cref{U-1}:}
For simplicity, we first ignore the extra randomness $\omega$. According to the H-decomposition of $s(x_1,\dots, x_k)$, we have 
\begin{equation}
	\label{decom}
	\begin{aligned}
		&~~~~\frac{1}{n}\sum [e_i-s_0]^2 \\
		&=  \frac{1}{n}  \frac{(n-k)^2}{n^2}\sum_{i=1}^n \left[\sum_{j=1}^k -{k-1\choose j-1}{n-1\choose j}^{-1} \sum s^{j}(X_{i_1},\dots, X_{i_j};\not \exists i) \right.\\
		& \left.~~~~ + {k-1\choose j-1}{n-1\choose j-1}^{-1}\sum s^{j}(X_{i_1},\dots, X_{i_j}; \exists i)\right]^2 \\
		& =\frac{1}{n}\frac{(n-k)^2}{n^2} \sum_{i=1}^n \left[ -\frac{1}{n-1}\sum_{j\neq i}^n s^{1}(X_i) + 
		s^{1}(X_i)  +   \sum_{j=2}^k \right. \\  
		&  \left.- {k-1\choose j-1}{n-1\choose j}^{-1} \sum s^{j}(X_{i_1},\dots, X_{i_j};\not \exists i) 
		+ {k-1\choose j-1}{n-1\choose j-1}^{-1}\sum s^{(j)}(X_{i_1},\dots, X_{i_j}; \exists i)\right]^2 \\
		& = \frac{1}{n}\frac{(n-k)^2}{n^2} \sum_{i=1}^n  \left[s^{1}(X_i) + \mathrm{T}_i \right]^2.
	\end{aligned} 
\end{equation}
$s^{1}(X_i) $ and $\mathrm{T}_i$ are uncorrelated and have mean 0.  After some calculation, we find that 
\begin{equation*}
	\E[(s^{1}(X_i))^2] = V_1
\end{equation*}
and 
\begin{equation*}
	\begin{aligned}
		\E[\mathrm{T}_i^2] &=  \frac{1}{n-1}V_1 + \sum_{j=2}^k  {k-1\choose j-1}^2 \left[{n-1\choose j}^{-1}+ {n-1\choose j-1}^{-1} \right] V_j\\
		& = \frac{1}{n-1}V_1 + \frac{n}{k^2}\sum_{j=2}^k \frac{j}{1-j/n}\frac{{k\choose j}^2}{{n\choose j}}V_j.
	\end{aligned}
\end{equation*}
Then 
\begin{equation}
	\begin{aligned}
		\E[\mathrm{T}_i^2]  &\approx  \frac{1}{n-1}V_1 +  \sum_{j=2}^k {k-1\choose j-1}^2 {n-1\choose j-1}^{-1}V_j\\
		& =   \frac{1}{n-1}V_1 + \sum_{j=2}^k\frac{j}{k}{k-1\choose j-1}{n-1\choose j-1}^{-1}\left[{k\choose j}V_j\right] \\
		& \leq   \frac{1}{n-1}V_1  + \frac{2}{n}\sum_{j=2}^k {k\choose j}V_j \\
		& =  \frac{1}{n-1}\zeta_1  + \frac{2}{n}(\zeta_k-k\zeta_1),
	\end{aligned}
\end{equation}
where $\zeta_k=\V(s)= \sum_{j=1}^k {k\choose j} V_j$ and $\zeta_1=\V(\E[s|X_1])=V_1$.
Let $\rl = \E[(s^1(X_i))^2]$ and $\mathrm{R} = \E[\mathrm{T}_i^2]$. Since $\frac{k}{n}(\frac{\zeta_k}{k\zeta_1}-1)\to 0$, we have 
\begin{equation}
	\label{lr}
	\mathrm{R}/\mathrm{L}\leq \frac{2/n(\zeta_k-k\zeta_1)}{\zeta_1} +\frac{1}{n-1}\to 0.
\end{equation}
Therefore, $(s^1(X_i))^2 $ dominates $\mathrm{T}^2_i$ and thus by \cref{gold},
\begin{equation}
	\label{conv}
	\begin{aligned}
		\frac{1}{n}\sum [e_i-s_0]^2/V_1   & \xrightarrow[]{p} \frac{1}{n} \frac{(n-k)^2}{n^2} \sum_{i=1}^n[ s^{(1)}(X_i)]^2/V_1 \\
		& \xrightarrow[]{p}  \frac{(n-k)^2}{n^2}  \E[ s^{(1)}(X_i)]^2/V_1\\
		& \to 1.
	\end{aligned}
\end{equation}
So $\sij_\ru/\frac{k^2}{n}V_1 = \frac{1}{n}\sum [e_i-s_0]^2/V_1 \xrightarrow[]{p}1$. Observe that 
\begin{equation}
	\begin{aligned}
		1\leq \V(\ru_{n,k})/\frac{k^2}{n}V_1& =\left(\frac{k^2}{n}V_1\right)^{-1} \sum_{j=1}^k{k\choose j}^2{n\choose j}^{-1}V_j  \\
		& \leq 1 + \left(\frac{k^2}{n}V_1\right)^{-1}\frac{k^2}{n^2}\sum_{j=2}^k {k\choose j}V_j \\
		& \leq 1  + \frac{k}{n}(\frac{\zeta_k}{k\zeta_1}-1)\\
		& \to 1.
	\end{aligned}
\end{equation}
Therefore, $\V(\ru_{n,k})/\frac{k^2}{n}V_1\to 1$ and thus $\sij_\ru/ \V(\ru_{n,k})  \xrightarrow[]{p}1$. 

For $s=s(x_1,\dots, x_k;\omega)$, we define an extended H-decomposition by letting
\begin{equation}
	s^1(x_1) = s_1(x_1),
\end{equation}
\begin{equation}
	s^c(x_1,\dots, x_c) = s_c(x_1,\dots, x_c) -\sum_{j=1}^c \sum_{i_1,\dots, i_j\in \{1,\dots, j\}}s^j(x_{i_1},\dots, x_{i_j})
\end{equation}
for $c=1,\dots, k-1$ and 
\begin{equation}
	s^k(x_1,\dots, x_k) = s(x_1,\dots, x_k;\omega) - \sum_{j=1}^{k-1}\sum_{i_1,\dots, i_j \in \{1,\dots, j\}}s^j(x_{i_1},\dots, x_{i_j})
\end{equation}
where $s_c(x)=\E[s(x_1,\dots,x_c,X_{c+1}, X_k;\omega)]-\E[s]$. Then $s(x_1,\dots, x_k;\omega)$ can be written as $\E[s]+ \sum_{j=1}^k\sum_{i_1,\dots, i_j\in \{1,\dots, j\}} s^j(x_{i_1},\dots, x_{i_j})$ and thus for 
\begin{equation}
	\sij^{\omega}_\ru = \frac{k^2}{n^2}\sum [e_i^\omega - s_0^\omega]^2,
\end{equation}
it can be decomposed the same way as  \cref{decom}. Thus, we have $\sij^\omega_\ru /\frac{k^2}{n}\zeta_{1,\omega}\xrightarrow[]{p}1 $. \hfill $\blacksquare$ \\

\noindent \textbf{Proof of \cref{U-2}:} 
As above, let us first ignore the extra randomness $\omega$ for simplicity. 
Letting $p = N{n\choose k}^{-1}$, we have 
\begin{equation}
	\begin{aligned}
		\hat{e}_i - \hat{s}_0 & = \frac{n}{Nk}\sum s(X_{i_1},\dots, X_{i_k};\exists i) - \frac{1}{N}\sum s(X_{i_1},\dots, X_{i_k}) \\
		& = \frac{n}{Nk}\sum (s(X_{i_1},\dots, X_{i_k};\exists i) -\theta) - \frac{1}{N}\sum (s(X_{i_1},\dots, X_{i_k}) -\theta) + \left(\frac{\hat{N}_i}{N_i} - \frac{\hat{N}}{N}\right) \theta \\
		& = {n-1\choose k-1}^{-1}\sum \frac{\rho}{p}(s(X_{i_1},\dots, X_{i_k};\exists i)-\theta) - {n\choose k}^{-1}\sum \frac{\rho}{p}(s(X_{i_1},\dots, X_{i_k}) -\theta)+ r_i \\
		& \triangleq {e}^\dagger_i- {s}^\dagger_0 + r_i.
	\end{aligned}
\end{equation}
where $N_i=Nk/n$, $\hat{N}=\sum \rho$ and $\hat{N}_i = \sum \rho\bone_{i\in \{i_1,\dots, i_k\}}$.

Comparing the H-decomposition of $s^\dagger(x_1,\dots,x_k;\rho) = \frac{\rho}{p}s(x_1,\dots,x_k)$ and $s(x_1,\dots, x_k)$,  we have $V_j^\dagger = V_j$ for $j=1,\dots, k-1$ and $V_k^\dagger = V_k + \frac{1-p}{p}\zeta_k$. Similar to \cref{decom}, we have 
\begin{equation}
	\begin{aligned}
		\frac{1}{n}\sum_{i=1}^n [e^\dagger_i - s^\dagger_0]^2  & = \frac{1}{n}\frac{(n-k)^2}{n^2}\sum_{i=1}^n \left[s^{1}(X_i) +\mathrm{T}^{\dagger}_i\right]^2,
	\end{aligned}
\end{equation}
where $s^{1}(x) = \E[s(x,X_2,\dots, X_k)]$. Note that 
$\E[(s^{1}(X_1))^2]  = V_1^\dagger = V_1 $ and 
\begin{equation}
	\label{lrm}
	\begin{aligned}	
		\E[(\mathrm{T}_i^\dagger)^2] & = \frac{1}{n-1}V^\dagger_1 +  \frac{n}{k^2}\sum_{j=2}^{k}\frac{j}{1-j/n} \frac{{k\choose j}^2}{{n\choose j}} V_j^\dagger \\
		& = \frac{1}{n-1}V_1 +  \frac{n}{k^2}\sum_{j=2}^{k}\frac{j}{1-j/n} \frac{{k\choose j}^2}{{n\choose j}} V_j +\frac{n}{k^2} \frac{k}{1-k/n}\frac{1}{N}(1-p)\zeta_k\\
		& :=  \mathrm{R} + \mathrm{M}
	\end{aligned}
\end{equation}
where $\mathrm{M} =  \frac{1}{1-k/n}\frac{n}{Nk}(1-p)\zeta_k$. Let $\rl = \E[(s^1(X_i))^2]$. Since $\frac{k}{n}(\frac{\zeta_k}{k\zeta_1}-1)\to 0$,  we have $\mathrm{R}/\rl\to 0$ by \cref{lr}. 
Next, we have 
\begin{equation}
	\label{sij-con}
	\begin{aligned}
		\mathrm{M}/\mathrm{L} &=\frac{\frac{1}{1-k/n}\frac{n}{Nk}(1-p)\zeta_k}{\zeta_1}  \\
		& \leq  \frac{1}{1-k/n} \times \frac{n}{N}\frac{\zeta_k}{k\zeta_1}  \\
		& \approx \frac{n}{N}\frac{\zeta_k}{k\zeta_1} \to 0
	\end{aligned}
\end{equation} 
since $\zeta_k$ is bounded and $\frac{n}{Nk\zeta_1}\to 0$.
Thus, $\frac{1}{n}\sum_{i=1}^n [e^\dagger_i - s^\dagger_0]^2/V_1  \xrightarrow[]{p} 0$ by \cref{gold}. Note that
\begin{equation}
	\begin{aligned}
		\E[r_i^2] 
		& =  \E\left[(\frac{\hat{N}_i}{N_i} -1) -(\frac{\hat{N}}{N}-1)\right]^2\theta^2 \\
		& \leq 2\theta^2 \left[\frac{1}{N_i}(1-\frac{N}{{n\choose k}}) + \frac{1}{N}(1-\frac{N}{{n\choose k}})\right] \\
		%		& \leq 2\theta^2 (\frac{1}{N_1} + \frac{1}{N})\\
		& \leq 4\theta^2 /N_i.
	\end{aligned}
\end{equation}
Thus, $\frac{1}{n}\sum \E[\sum r_i^2]/V_1 \leq 4 \theta^2  \frac{n}{N} \frac{1}{kV_1}\to 0 $ according to the conditions.  By \cref{gold} again, we have 
$\frac{1}{n}\sum_i (\hat{e}_i -\hat{s}_0)^2/V_1 \xrightarrow[]{p} 1$
and therefore it follows that $\widehat{\sij_\ru}/\frac{k^2}{n}V_1\xrightarrow[]{p}1$.

Again, the extra randomness only results in an extended version of H-decomposition. Everything above can be directly applied to $s(x_1,\dots, x_k;\omega)$. \hfill $\blacksquare$ \\

%%%%%%%%%%%%%%%%%%%%%%
% Higher Order Terms
%%%%%%%%%%%%%%%%%%%%%%
\section{Higher Order Pseudo Infinitesimal Jackknife }
\label{app:higherorderIJ}

Recall that in the context of U-statistics, $\V(\ru) =\sum_{j=1}^k {k\choose j}^2{n\choose j}^{-1}V_j$.  In the final discussion provided in the main text, we noted that the preceding results largely assumed that the U-statistic was close to linear statistic so that the variance of U-statistic is dominated by its first order term $k^2/n V_1$ and so the problem of providing a good estimate for $\V(\ru)$ can be reduced to providing a good estimate for $V_1$.  But what if the statistic is not close to linear and the remaining terms in $\V(\ru)$ are not negligible?  Can we obtain an improved estimator by proposing further estimates of $V_j$ for $j=2,\dots, k$?  We now address these questions.

We begin by considering the second term $V_2$ and extend those results to all $j$, $3\leq j\leq k$.  Since $V_2=\V(\E[s|X_1,X_2]-\E[s|X_1]-\E[s|X_2]+\E[s])$, a natural estimate for the second order term ${k\choose 2}^2{n\choose 2}^{-1}V_2$ would be 
\begin{equation}
	\label{l-u-2}
	\left( {{k\choose 2}}/{{n\choose 2}} \right)^2 \sum_{i,j} [e_{ij}-e_i-e_j+s_0]^2
\end{equation}
where $e_{ij}=\E_*[s^*|X_1^*=X_i, X_2^*=X_j]$. Before analyzing the properties of this estimate, we first point out its connection to the $\sij_\ru$.

\begin{proposition}
	\label{sij-d}
	Let $\D_n^*= (X_1^*,\dots, X_k^*)$ be a subsample of $\D_n$ and $w^*_{ij} = \bone_{X_i,X_j\in \D_n^*} - \frac{k}{n}\bone_{X_i\in \D_n^*} - \frac{k}{n}\bone_{X_j\in \D_n^*} +\frac{k(k-1)}{n(n-1)}$.  Then 
	\begin{equation}
		\cov_*(s^*, w_{ij}^*) = {k\choose 2}/{n\choose 2 } (e_{ij}-e_i-e_j+s_0)
	\end{equation} 
	where $*$ refers the procedure of subsampling without replacement  and $e_{ij} = \E_*[s^*|X_1^*=X_i,X_2^*=X_j]$. We  call \cref{l-u-2} the  second order pseudo-IJ estimator of U-statistics:
	\begin{equation}
		\begin{aligned}
			\sij_\ru(2) &= \sum_{i,j}\cov^2_*(s^*,w_{ij}^*) \\
			& = {{k\choose 2}^2}/{n\choose 2}^2 \sum_{i,j} [e_{ij}-e_i-e_j+s_0]^2.
		\end{aligned}
	\end{equation}
\end{proposition}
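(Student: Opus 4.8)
The plan is to expand $\cov_*(s^*,w_{ij}^*)$ by bilinearity and reduce every resulting piece to the first-order identity already proved in \cref{U-sij-pre}. Writing $W_{ij}=\bone_{X_i,X_j\in\D_n^*}$ and $W_i=\bone_{X_i\in\D_n^*}$, and noting that the additive constant $\frac{k(k-1)}{n(n-1)}$ contributes nothing to a covariance, I would start from
\[
\cov_*(s^*,w_{ij}^*) = \cov_*(s^*,W_{ij}) - \frac{k}{n}\cov_*(s^*,W_i) - \frac{k}{n}\cov_*(s^*,W_j).
\]
The two single-index terms are handled verbatim by \cref{U-sij-pre}, which gives $\cov_*(s^*,W_i)=\frac{k}{n}(e_i-s_0)$ and likewise for $j$, so the only genuinely new quantity is the pairwise covariance $\cov_*(s^*,W_{ij})$.

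For that term I would repeat the conditioning argument from the proof of \cref{U-sij-pre}, now fixing two coordinates. The probability that a uniformly chosen subsample of size $k$ contains both $X_i$ and $X_j$ is $\E_*[W_{ij}]=\binom{n-2}{k-2}/\binom{n}{k}=\binom{k}{2}/\binom{n}{2}$, and by the permutation symmetry of $s$ conditioning on $\{X_i,X_j\in\D_n^*\}$ is equivalent to fixing $X_1^*=X_i,\,X_2^*=X_j$, so $\E_*[s^*\mid W_{ij}=1]=e_{ij}$. Hence $\E_*[s^*W_{ij}]=\binom{k}{2}/\binom{n}{2}\,e_{ij}$, and subtracting $s_0\E_*[W_{ij}]$ yields
\[
\cov_*(s^*,W_{ij}) = \frac{\binom{k}{2}}{\binom{n}{2}}\bigl(e_{ij}-s_0\bigr).
\]
Collecting the three pieces reproduces the $e_{ij}$ coefficient exactly, and the second displayed identity for $\sij_\ru(2)$ then follows immediately by squaring and summing over $i,j$.

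The step I expect to require the most care is matching the coefficients on the first-order contributions. The pairwise term supplies $e_{ij}$ and $s_0$ but no $e_i$ or $e_j$; all of the $e_i,e_j$ dependence comes from the two subtracted single-index pieces, which carry $e_i$ with weight $\frac{k}{n}\cdot\frac{k}{n}=\frac{k^2}{n^2}$. For these to assemble into the claimed $-\frac{\binom{k}{2}}{\binom{n}{2}}(e_i+e_j)=-\frac{k(k-1)}{n(n-1)}(e_i+e_j)$, the centering weight on $W_i,W_j$ must be $\frac{k-1}{n-1}$ rather than $\frac{k}{n}$. This is also precisely the value that makes $w_{ij}^*$ exactly mean-zero, $\E_*[w_{ij}^*]=0$, so before finalizing I would confirm whether the $\frac{k}{n}$ in the definition is intended as this exact coefficient or only as its leading-order approximation; with $\frac{k}{n}$ the identity holds up to a correction of order $\frac{k(n-k)}{n^2(n-1)}$ in each first-order term, which with the exact weight $\frac{k-1}{n-1}$ vanishes and delivers the stated equality on the nose.
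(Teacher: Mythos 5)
Your proof is correct, and the coefficient concern you raise at the end is not a technicality --- it identifies a genuine inconsistency in the statement. With the stated weights $\frac{k}{n}$ on $\bone_{X_i\in\D_n^*}$ and $\bone_{X_j\in\D_n^*}$, your bilinearity computation gives
\begin{equation*}
\cov_*(s^*,w_{ij}^*)=\frac{\binom{k}{2}}{\binom{n}{2}}\,(e_{ij}-s_0)-\frac{k^2}{n^2}\,(e_i+e_j-2s_0),
\end{equation*}
which differs from the claimed right-hand side by $\frac{k(n-k)}{n^2(n-1)}(e_i+e_j-2s_0)$; the identity holds exactly if and only if the centering weight is $\frac{k-1}{n-1}$, which is also the unique choice making $\E_*[w_{ij}^*]=0$. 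The paper's own proof vindicates you: it works in the opposite direction, expanding $(e_{12}-e_1-e_2+s_0)$ as weighted sums over subsamples and factoring out $\frac{n(n-1)}{k(k-1)}$, and the quantity it ends up pairing with $s^*$ is $w_1^*w_2^*-\frac{k-1}{n-1}w_1^*-\frac{k-1}{n-1}w_2^*+\frac{k(k-1)}{n(n-1)}$ --- precisely the $\frac{k-1}{n-1}$-centered weight you identified, not the $\frac{k}{n}$-centered one defined in the proposition. So the displayed definition of $w_{ij}^*$ should be read with $\frac{k-1}{n-1}$ in place of $\frac{k}{n}$, and your analysis catches this.

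Beyond that, your route and the paper's are the same counting organized in opposite directions, but yours is more modular and slightly more complete. The paper recomputes all conditional expectations from scratch and, in its final step, silently equates $\E_*[w_{ij}^*s^*]$ (up to the combinatorial factor) with $\cov_*(s^*,w_{ij}^*)$, which is legitimate only because $\E_*[w_{ij}^*]=0$ for the exact weights --- the mean-zero check you make explicit. Your version needs only one new computation, $\cov_*(s^*,\bone_{X_i,X_j\in\D_n^*})=\frac{\binom{k}{2}}{\binom{n}{2}}(e_{ij}-s_0)$, obtained from $\E_*[\bone_{X_i,X_j\in\D_n^*}]=\binom{n-2}{k-2}\big/\binom{n}{k}=\binom{k}{2}\big/\binom{n}{2}$ together with permutation symmetry of $s$, while the single-index terms are inherited verbatim from \cref{U-sij-pre}; the second display of the proposition then follows by squaring and summing, exactly as you say.
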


\begin{proof}
	We have 
	\begin{equation}
		\begin{aligned}
			~~~~ & (e_{12}-e_1-e_2+s_0) \\
			&  = \sum_{w_1^*=1,w_2^*=1,w_1^*+\dots + w_n^*=k}\frac{(k-2)!}{(n-2)\cdots (n-k)}s^* - \sum_{w_1^*=1,w_1^*+\dots + w_n^*=k} \frac{(k-1)!}{(n-1)\cdots (n-k)} s^* - \\
			& ~~~~ \sum_{w_2^*=1,w_1^*+\dots + w_n^*=k} \frac{(k-1)!}{(n-2)\cdots (n-k)}s^* 
			+ \sum_{w_1^*+\dots + w_n^*=k} \frac{k!}{n\cdots (n-k)} s^* \\
			&  = \sum_{w_1^*+\dots + w_n^*=k}\frac{n(n-1)}{k(k-1)}{n\choose k}^{-1}(w_1^*)(w_2^*)s^* - \sum_{w_1^*+\dots + w_n^*=k} \frac{n}{k}{n\choose k}^{-1}(w_1^*) s^* -\\
			& ~~~~ \sum_{w_1^*+\dots + w_n^*=k}  \frac{n}{k}{n\choose k}^{-1}(w_2^*)s^* + \sum_{w_1^*+\dots + w_n^*=k} {n\choose k}^{-1}s^*  \\
			& = \sum {n\choose k}^{-1}\left(\frac{n(n-1)}{k(k-1)} w_1^*w_2^* - \frac{n}{k}w_1^* - \frac{n}{k}w_2^*+1\right)s^* \\
			& = \frac{n(n-1)}{k(k-1)}  \sum {n\choose k}^{-1}\left(w_1^*w_2^* - \frac{k-1}{n-1}w_1^* - \frac{k-1}{n-1}w_2^*+\frac{k(k-1)}{n(n-1)}\right)s^*. \\
		\end{aligned}
	\end{equation}
	Thus, 
	\begin{equation}
		\sum_{i,j} \cov_*^2(s^*, w_{ij}^*) = \left(\frac{{k\choose  2}}{{n\choose 2}}\right)^2 \sum_{i,j} (e_{1,2}-e_1-e_2+s_0)^2.
	\end{equation}
\end{proof}

Note that the first-order $\sij_\ru$ involves the covariance of $s^*$ and $w_j^*$ -- the counts of how many times each original observation appears in a subsample, whereas $\sij_\ru(2)$ involves covariance the of $s^*$ and $w_{ij}^*$ -- the count of how often each \emph{pair} of observations appears in a subsample.  In this sense then, $\sij_\ru(2)$ is a natural extension of $\sij_\ru$ and for notational convenience, we can also write $\sij_\ru$ as $\sij_\ru(1)$.  Similarly, we can extend this idea to derive a general $d^{th}$ order estimator $\sij_\ru(d)$ for $d=1,\dots, k$.

\begin{corollary}
	For $d=1,\dots, k$, define
	\begin{equation}
		\begin{aligned}
			\sij_\ru(d)  &= \sum_{(n,d)}\cov_*^2(s^*, w^*_{i_1,\dots, i_d}) \\
			& = {k\choose d}^2/{n\choose d}^2 \sum_{(n,d)}\left[\sum_{j=0}^d (-1)^{d-j}\sum_{(d,j)}e_{i_1,\dots, i_j}\right]^2
		\end{aligned}
	\end{equation}
	where $w_{i_1,\dots, i_d}^* = \sum_{j=0}^d  (-1)^{d-j}\frac{{n-d+j\choose k-d+j}}{{n\choose k}} \left[\sum_{(d,j)}\prod w^*_{i_j}\right]$. The expression for $w^*_{i_1,\dots, i_d}$ is somewhat involved because we are considering subsampling without replacement. If instead we perform subsampling with replacement, then  $w_{i_1,\dots, i_d}^* = \prod(w^*_{i_j}-1)$.
\end{corollary}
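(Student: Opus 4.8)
The plan is to reduce the statement to a single building-block covariance and then resolve the combinatorics by inclusion--exclusion, mirroring the direct computation used in the proof of \cref{sij-d} for the case $d=2$. Throughout, $*$ denotes subsampling of size $k$ without replacement, so that for any subset $S\subseteq\{i_1,\dots,i_d\}$ with $|S|=j$ the product $\prod_{i\in S} w_i^*=\bone_{\{X_i:\,i\in S\}\subseteq\D_n^*}$ is itself the indicator of the event that all observations indexed by $S$ land in the subsample.

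First I would establish the building block on which everything rests: for every such $S$ with $|S|=j$,
\[ \cov_*\!\Big(s^*,\ \prod_{i\in S}w_i^*\Big)=\frac{{k\choose j}}{{n\choose j}}\,(e_S-s_0), \]
where $e_S=\E_*[\,s^*\mid \{X_i:i\in S\}\subseteq\D_n^*\,]$. This is immediate from $\E_*[\prod_{i\in S}w_i^*]=\P_*(S\subseteq\D_n^*)={{k\choose j}}/{{n\choose j}}$ together with $\E_*[\,s^*\prod_{i\in S}w_i^*\,]=({{k\choose j}}/{{n\choose j}})\,e_S$, exactly as in \cref{U-sij-pre} (the case $j=1$) and in the $j=2$ step inside the proof of \cref{sij-d}. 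I would also record the inverted relation $e_S=({{n\choose j}}/{{k\choose j}})\,\E_*[\,s^*\prod_{i\in S}w_i^*\,]$, since this is what lets one pass between the covariance form and the finite-difference form in the statement.

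Next, using linearity of $\cov_*(s^*,\cdot)$, I would substitute the definition of $w^*_{i_1,\dots,i_d}$ as a signed combination of the products $\prod_{i\in S}w_i^*$ and plug in the building block, obtaining
\[ \cov_*\!\big(s^*,w^*_{i_1,\dots,i_d}\big)=\sum_{j=0}^d(-1)^{d-j}\,c_j\,\frac{{k\choose j}}{{n\choose j}}\sum_{S:\,|S|=j}(e_S-s_0), \]
where $c_j$ is the normalizing coefficient attached to the $j$-subsets in the definition of $w^*_{i_1,\dots,i_d}$. The main obstacle is to show that the product $c_j\,{{k\choose j}}/{{n\choose j}}$ is independent of $j$ and equal to ${{k\choose d}}/{{n\choose d}}$; this is a falling-factorial identity that the normalization in $w^*_{i_1,\dots,i_d}$ is engineered to satisfy, and I would cross-check it against the $d=2$ case, where it produces the coefficient $(k-1)/(n-1)$ appearing in the proof of \cref{sij-d}. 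Granting this collapse, the constant factors out and the $s_0$ contributions vanish automatically because $\sum_{j=0}^d(-1)^{d-j}{d\choose j}=(1-1)^d=0$, leaving
\[ \cov_*\!\big(s^*,w^*_{i_1,\dots,i_d}\big)=\frac{{k\choose d}}{{n\choose d}}\sum_{j=0}^d(-1)^{d-j}\sum_{(d,j)}e_{i_1,\dots,i_j}. \]
Squaring and summing over the ${n\choose d}$ index sets then yields the two stated expressions for $\sij_\ru(d)$. The final with-replacement claim $w^*_{i_1,\dots,i_d}=\prod_j(w^*_{i_j}-1)$ follows from the identical expansion once one uses $\E_*[w_i^*]=1$, so that the centered products $\prod_j(w^*_{i_j}-1)$ play the role of the multilinear interaction terms in the H-decomposition used elsewhere in the paper.
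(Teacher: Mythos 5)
Your overall strategy is the right one, and it is the natural generalization of the paper's own computation (the paper gives no separate proof of this corollary; it is meant to follow the pattern of the proof of \cref{sij-d}): the building block
$\cov_*\left(s^*,\prod_{i\in S}w_i^*\right)=\frac{\binom{k}{j}}{\binom{n}{j}}(e_S-s_0)$ for $|S|=j$ is correct, and so is your bookkeeping of the $s_0$ terms via $\sum_{j}(-1)^{d-j}\binom{d}{j}=0$. The genuine gap is precisely the step you defer as ``a falling-factorial identity that the normalization is engineered to satisfy'': that identity is \emph{false} for the weights as defined in the statement. The coefficient attached to the $j$-subsets there is
\begin{equation*}
c_j=\frac{\binom{n-d+j}{k-d+j}}{\binom{n}{k}}=\frac{\binom{k}{d-j}}{\binom{n}{d-j}},
\qquad\text{hence}\qquad
c_j\,\frac{\binom{k}{j}}{\binom{n}{j}}=\frac{\binom{k}{d-j}\binom{k}{j}}{\binom{n}{d-j}\binom{n}{j}},
\end{equation*}
which depends on $j$ and equals $\binom{k}{d}/\binom{n}{d}$ only at $j=0$ and $j=d$. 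Your own proposed cross-check at $d=2$ exposes this: the stated normalization gives $c_1=\binom{n-1}{k-1}/\binom{n}{k}=k/n$, not the $(k-1)/(n-1)$ you correctly recall from the proof of \cref{sij-d}, and $\frac{k}{n}\cdot\frac{k}{n}\neq\frac{k(k-1)}{n(n-1)}$. A concrete counterexample: take $n=3$, $k=d=2$ and write $s_{ab}$ for the value of $s$ on the subsample $\{X_a,X_b\}$; then the stated $w^*_{12}$ yields $\cov_*(s^*,w^*_{12})=\frac{2}{27}s_{12}-\frac{1}{27}(s_{13}+s_{23})$, whereas $\frac{\binom{k}{2}}{\binom{n}{2}}(e_{12}-e_1-e_2+s_0)=\frac{1}{9}s_{12}-\frac{1}{18}(s_{13}+s_{23})$, so the first display of the corollary fails as written.

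What your requirement $c_j\binom{k}{j}/\binom{n}{j}=\binom{k}{d}/\binom{n}{d}$ actually forces is
$c_j=\frac{\binom{k}{d}\binom{n}{j}}{\binom{n}{d}\binom{k}{j}}=\frac{\binom{k-j}{d-j}}{\binom{n-j}{d-j}}=\frac{\binom{n-d}{k-d}}{\binom{n-j}{k-j}}$,
i.e.\ the falling-factorial ratio must start at $(k-j)/(n-j)$ rather than at $k/n$; with this corrected normalization your argument goes through verbatim. So your proposal does not prove the statement as written --- it proves a corrected version, and in doing so it reveals that the stated $w^*_{i_1,\dots,i_d}$ (and likewise the stated $w^*_{ij}$ in \cref{sij-d}, whose proof silently uses the $(k-1)/(n-1)$ coefficients) is inconsistent with the claimed covariance identity. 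To make the write-up sound, you must either verify the collapse identity and redefine the weights accordingly, or argue backwards from the $e$-expression to the unique mean-zero weight, as the paper's $d=2$ proof effectively does. A smaller slip: your justification of the with-replacement remark invokes $\E_*[w_i^*]=1$, which holds for the bootstrap ($k=n$) but not for size-$k$ subsampling with replacement, where $\E_*[w_i^*]=k/n$.
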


Like $\E[\sij_\ru]$, $\E[\sij_\ru(d)]$ is a linear combination of the $V_j$. Let $a_i = {n -i \choose k-i}^{-1}$ for $ i = 0,1,\dots, d$ and define $b_i$ for $i=0,1,\dots, d$ by 
\[
\begin{aligned}
	b_0 &=   a_0 \\
	b_1  & = a_1-a_0  =  a_1 - b_0 \\
	\vdots \\
	b_d & = a_d-{d\choose 1}a_{d-1} + {d\choose 2}a_{d-2} - \dots a_0   =  a_d - {d\choose 1}b_{d-1}  -{d\choose 2}b_{d-2}- \dots   -b_0.
\end{aligned}
\]
Additionally, let $c_i = b_i {n-d\choose k-i}$ and $m_i = c_{d-i}$ for $i = 0,\dots, d$. Then for $j=1,\dots, k$, the coefficient of $V_j$ in $\E[\sij_\ru(d)]$ is ${k\choose d}^2/{n\choose d} \lambda_j(d)$, where 
\[
\begin{aligned}
	\lambda_j(d) 
	& = {d\choose 0}{n-d\choose j-d}^{-1} \left(m_0 {n-d\choose j-d}\right)^2 +\\
	& ~~~~ {d\choose 1}{n-d\choose j-d+1)}^{-1}\left[m_1{k-d+1 \choose j-d+1}- m_0 {k-d \choose j-d+1}\right]^2+ \\
	& ~~~~ {d\choose 2}{n-d\choose j-d+2}^{-1}\left[m_2{k-d+2\choose j-d+2} -{2\choose 1}m_1{k-d+1\choose j-d+2}  + m_0{k-d\choose j-d+2)}\right]^2+ \\
	& ~~~~ \vdots \\
	& ~~~~ {d\choose d}{n-d\choose j}^{-1}\left[m_d{k\choose j}-{d\choose d-1}m_{d-1}{k-1\choose j} \right.\\
	& ~~\left.~~~~+ {d\choose d-2}m_{d-2}{k-2 \choose j}-\dots {d\choose 1}m_0{k-d\choose j}\right]^2.
\end{aligned}
\]
Putting this all together, we have the follow result.
\begin{proposition}
	\label{sij-bias}
	Writing the $\V(\ru)$ and $\E[\sij_\ru(d)]$ in terms of $V_1,\dots, V_k$,  the ratio of the coefficient of $V_j$ in  $\E[\sij_\ru(d)]$ to that in $\V(\ru)$ is given by $r_j(d)$, where 
	\begin{equation}
		\begin{aligned}
			r_j(d) = \frac{\lambda_j(d) {k\choose d}^2 {n\choose d}^{-1}}{{k\choose j}^2{n\choose j}^{-1}}, \quad j = 1,\dots, k.
		\end{aligned}
	\end{equation}
	Furthermore, $r_j(d)$ is monotone increasing with respect to $j$.
\end{proposition}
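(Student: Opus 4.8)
The plan is to treat the two assertions separately, since the first is pure bookkeeping while the second carries all of the content. \textbf{Part 1 (the ratio formula)} is immediate from the expansion derived just before the statement: there the coefficient of $V_j$ in $\E[\sij_\ru(d)]$ was identified as ${k\choose d}^2{n\choose d}^{-1}\lambda_j(d)$, while Hoeffding's identity gives $\V(\ru)=\sum_{j=1}^k{k\choose j}^2{n\choose j}^{-1}V_j$, so the coefficient of $V_j$ there is ${k\choose j}^2{n\choose j}^{-1}$. Dividing one by the other produces $r_j(d)$ exactly as stated, with no further argument required.

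For \textbf{Part 2 (monotonicity)} I would first strip away everything independent of $j$. Since the prefactor ${k\choose d}^2{n\choose d}^{-1}$ does not depend on $j$, proving $r_{j+1}(d)\ge r_j(d)$ amounts to proving $r_{j+1}(d)/r_j(d)\ge1$, and substituting ${n\choose j+1}/{n\choose j}=\tfrac{n-j}{j+1}$ together with ${k\choose j}^2/{k\choose j+1}^2=\big(\tfrac{j+1}{k-j}\big)^2$ reduces the whole claim to the single inequality
\[
\frac{\lambda_{j+1}(d)}{\lambda_j(d)}\ \ge\ \frac{(k-j)^2}{(j+1)(n-j)}, \qquad j=d,\dots,k-1,
\]
(the coefficients vanish for $j<d$, so the sequence simply starts at $0$ there). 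Thus everything comes down to controlling consecutive values of $\lambda_j(d)$.

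The plan for $\lambda_j(d)$ is to collapse it to closed form. Recall it is a double alternating sum: the outer index $i$ runs over $d+1$ blocks, each squaring an inner sum $S_i(j)=\sum_{l}(-1)^{i-l}{i\choose l}m_l{k-d+l\choose j-d+i}$, while the weights $m_i=b_{d-i}{n-d\choose k-d+i}$ arise from the binomial-transform inverse $b_i=\sum_l(-1)^{i-l}{i\choose l}{n-l\choose k-l}^{-1}$ of $a_i={n-i\choose k-i}^{-1}$. I would first simplify the $m_i$: the inclusion–exclusion telescopes to a clean product, e.g. $m_0=m_1=\tfrac{n-k}{n}$ when $d=1$ and $m_i=\tfrac{(n-k)(n-k+1-i)}{n(n-1)}$ when $d=2$, and I would prove by induction on $d$ that $m_i$ is always a product of $d$ linear factors in $(n-k)$ over $n(n-1)\cdots(n-d+1)$. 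Feeding these into $S_i(j)$ and applying Vandermonde/Pascal identities should collapse each inner sum to a single binomial term — exactly the phenomenon seen for $d=1$, where both $S_0$ and $S_1$ reduce to $\tfrac{n-k}{n}{k-1\choose j-1}$ — leaving $r_j(d)$ as a ratio of products of linear factors in $j$.

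Once $r_j(d)$ is in product form the conclusion is routine: $r_{j+1}(d)/r_j(d)$ becomes a product of elementary linear ratios, each verified to exceed $1$ by the same mechanism as the $d=1$ benchmark, where $\tfrac{r_{j+1}(1)}{r_j(1)}=\tfrac{(j+1)(n-j)}{j(n-j-1)}\ge1$ because the numerator beats the denominator by exactly $n$. The genuinely hard part is the algebraic collapse of the previous paragraph, namely showing that for every $d$ the composition of the binomial-transform coefficients $b_i$ with the inner differences $S_i(j)$ telescopes into a single product of linear factors in $j$; I expect this to require a careful induction on $d$ or the recognition of $S_i(j)$ as a terminating (Vandermonde-type) hypergeometric sum. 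With that closed form secured, the monotonicity inequality itself follows immediately from the ratio check.
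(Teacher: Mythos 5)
Your Part~1 is fine as a division of stated coefficients, but note that it inverts the logical structure of the paper: the coefficient formula you call ``immediate'' (that the coefficient of $V_j$ in $\E[\sij_\ru(d)]$ is ${k\choose d}^2{n\choose d}^{-1}\lambda_j(d)$) is precisely what the paper's proof is devoted to establishing. The paper proves it by expanding $e_{12}-e_1-e_2+s_0$ through the H-decomposition of the kernel, regrouping into four mutually uncorrelated blocks $A,B,C,D$, and summing their variances --- and even then it only carries this out for $d=2$, asserting that $d\geq 3$ is similar. Treating that expansion as given means your proposal contains no proof of the first assertion in the sense the paper intends; by itself this is a question of where one draws the line, but it leaves all the weight on Part~2.

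The genuine gaps are in Part~2. First, your parenthetical claim that the coefficients vanish for $j<d$ is false: in the paper's explicit $d=2$ computation the blocks $B$, $C$, $D$ all contribute at $j=1$; for instance the $j=1$ term of $\V(B)$ is $(C_2-C_1)^2V_1=\left(\tfrac{n-k}{n(n-1)}\right)^2V_1\neq 0$, so $\lambda_1(2)\neq 0$. (Conceptually, under subsampling without replacement the second difference $e_{ij}-e_i-e_j+s_0$ is not orthogonal to the first-order part of the kernel.) Hence your reduction cannot restrict the monotonicity check to $j\geq d$. Second, the closed-form collapse on which your entire endgame rests already fails at $d=2$: with $C_0=\tfrac{(n-k)(n-k-1)}{n(n-1)}$, $C_1=\tfrac{(n-k)^2}{n(n-1)}$, $C_2=\tfrac{(n-k)(n-k+1)}{n(n-1)}$, Pascal's rule reduces the inner brackets to genuine two-term combinations, e.g.\ $-C_1{k-1\choose j-1}+C_2{k-2\choose j-1}=\tfrac{n-k}{n(n-1)}\left[{k-2\choose j-1}-(n-k){k-2\choose j-2}\right]$ and $C_0{k\choose j}-2C_1{k-1\choose j}+C_2{k-2\choose j}=\tfrac{n-k}{n(n-1)}\left[(n-k-1){k-2\choose j-2}-2{k-2\choose j-1}\right]$, so $\lambda_j(2)$ is a sum of three squares of such two-term expressions and is not a product of linear factors in $j$. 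The ``ratio of elementary linear factors'' mechanism you extrapolate from $d=1$ (where the inner sums genuinely do collapse to a single ${k-1\choose j-1}$) therefore does not apply for $d\geq 2$, and since you yourself identify this collapse as the load-bearing step, the monotonicity assertion remains unproven in your proposal. For what it is worth, the paper's own proof also stops at deriving $\lambda_j(2)$ and never actually proves monotonicity --- it is only supported numerically (Figure~\ref{rjd}) --- but the route you propose is not a viable way to fill that gap.
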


\begin{proof}	
	We derive $\E[\sij_\ru(2)]$ in work that follows.  Expressions for $d\geq 3$ can be derived in the same spirit.  
	Consider $e_{ij} = \E_*[s^*|X_1^*=X_1, X_2^* = X_2]$.  We have
	\begin{equation*}
		\begin{aligned}
			(e_{12}- e_1-e_2+s_0) 
			& = {n\choose k}^{-1}\sum s(X_{i_1},\dots, X_{i_k};\not \exists 1,\not \exists 2) + \\
			& ~~~~ \left({n-2\choose k-2}^{-1}-2{n-1\choose k-1}^{-1} + {n\choose k}^{-1}\right)\sum s(X_{i_1}, \dots, X_{i_k};\exists 1,\exists 2)- \\
			&~~~~  \left({n-1\choose {k-1}}-{n\choose k}^{-1}\right)\sum s(X_{i_1},\dots, X_{i_k};\exists 1, \not \exists 2) - \\
			&  ~~~~ \left({n-1\choose {k-1}}-{n\choose k}^{-1}\right)\sum  s(X_{i_1},\dots, X_{i_k};\not \exists 1 \exists 2)  \\
			& := \mathrm{I} +  \mathrm{II}  + \mathrm{III} +\mathrm{IV}. 
		\end{aligned}
	\end{equation*}
	Looking at each term individually, by H-decomposition we have 
	\begin{equation}
		\begin{aligned}
			\mathrm{I} & = {n\choose k}^{-1}{n-2\choose k}{n-2\choose k}^{-1}\sum s(X_{i_1}, \dots, X_{i_k}; \not \exists 1, \not \exists 2) \\
			& = {n\choose k}^{-1}{n-2\choose k}\sum_{j=1}^k {k\choose j}{n-2\choose j}^{-1}\sum s^{j }(X_{i_1},\dots, X_{i_j}; \not \exists 1,\not \exists 2)
		\end{aligned}
	\end{equation}
	
	\begin{equation*}
		\begin{aligned}
			\mathrm{II}& =  - \left({n-1\choose {k-1}}^{-1}-{n\choose k}^{-1}\right)\sum s(X_{i_1},\dots, X_{i_k};\exists 1, \not \exists 2)  \\
			& =  - \left({n-1\choose {k-1}}^{-1}-{n\choose k}^{-1}\right){n-2\choose k-1}{n-2\choose k-1}^{-1}\sum s(X_{i_1},\dots, X_{i_k}; \exists 1, \not \exists 2) \\
			&  =  - \left({n-1\choose {k-1}}^{-1}-{n\choose k}^{-1}\right){n-2\choose k-1}\left[\sum_{j=1}^{k-1} {k-1\choose j}{n-2\choose j}^{-1}\sum s^{j}(X_{i_1},\dots, X_{i_j}; \not \exists 1,\not \exists 2) \right.+\\
			& ~~~~  \left. \sum_{j=1}^{k}{k-1\choose j-1}{n-2\choose j-1}^{-1}\sum s^{j}(X_{i_1},\dots, X_{i_j}; \exists 1, \not \exists 2)\right]
		\end{aligned}
	\end{equation*}
	
	\begin{equation*}
		\begin{aligned}
			\mathrm{III}& =  - \left({n-1\choose {k-1}}^{-1}-{n\choose k}^{-1}\right)\sum s(X_{i_1},\dots, X_{i_k};\not \exists 1,  \exists 2) \\
			& =  - \left({n-1\choose {k-1}}^{-1}-{n\choose k}^{-1}\right){n-2\choose k-1}{n-2\choose k-1}^{-1}\sum s(X_{i_1},\dots, X_{i_k};\not \exists 1,  \exists 2) \\
			& =    - \left({n-1\choose {k-1}}^{-1}-{n\choose k}^{-1}\right){n-2\choose k-1}\left[\sum_{j=1}^{k-1} {k-1\choose j}{n-2\choose j}^{-1}\sum s^{j}(X_{i_1},\dots, X_{i_j};\not \exists 1,\not \exists 2) \right.+\\
			& ~~~~  \left. \sum_{j=1}^{k}{k-1\choose j-1}{n-2\choose j-1}^{-1}\sum s^{j}(X_{i_1},\dots, X_{i_j}; \not \exists 1,  \exists 2)\right]
		\end{aligned}
	\end{equation*}
	
	\begin{equation}
		\begin{aligned}
			\mathrm{IV}& =  \left({n-2\choose k-2}^{-1}-2{n-1\choose k-1}^{-1} + {n\choose k}^{-1}\right)\sum s(X_{i_1},\dots, X_{i_k};\exists 1, \exists 2)  \\
			& = \left({n-2\choose k-2}^{-1}-2{n-1\choose k-1}^{-1} + {n\choose k}^{-1}\right){n-2\choose  k-2} \left[\sum_{j=1}^{k-2} {k-2\choose j}{n-2\choose j}^{-1} \right.\times\\
			&~~~~~~~~ \sum s^j(X_{i_1},\dots, X_{i_j}; \not \exists 1, \not \exists 2)+ \\
			& ~~~~~~~~  \left. \sum_{j=1}^{k-1}{k-2\choose j-1}{n-2\choose j-1}^{-1}\times \sum s^j(X_{i_1},\dots, X_{i_j}; \exists 1, \not \exists 2) \right.+\\
			& ~~~~~~~~  \left.  \sum_{j=1}^{k-1}{k-2\choose j-1}{n-2\choose j-1}^{-1}\times \sum s^j(X_{i_1},\dots, X_{i_j}; \not  \exists 1,  \exists 2 )\right.+ \\
			& ~~~~~~~~  \left. \sum_{j=2}^{k}{k-2\choose j-2}{n-2\choose j-2}^{-1}\times \sum s^j(X_{i_1},\dots, X_{i_j};\exists 1, \exists 2) \right] \\
		\end{aligned}
	\end{equation}
	
	\noindent In conclusion, we have $\mathrm{I+II+III+IV} = A +B+C+D$, where $A,B,C,D$ are uncorrelated and given by
	
	\begin{equation*}
		\begin{aligned}
			A 
			& = \left({n-2\choose k-2}^{-1}-2{n-1\choose k-1}^{-1} + {n\choose k}^{-1}\right){n-2\choose  k-2}\times\\ &~~~~\sum_{j=2}^{k}{k-2\choose j-2}{n-2\choose j-2}^{-1}\sum s^j(X_{i_1},\dots, X_{i_j};\exists 1, \exists 2) 
		\end{aligned}
	\end{equation*}

	\begin{equation*}
		\begin{aligned}
			B & = - \left({n-1\choose {k-1}}-{n\choose k}^{-1}\right){n-2\choose k-1}\left[ \sum_{j=1}^{k}{k-1\choose j-1}{n-2\choose j-1}^{-1}\sum s^j(X_{i_1},\dots, X_{i_j}; \exists 1, \not \exists 2) \right]+ \\
			&~~~~   \left({n-2\choose k-2}^{-1}-2{n-1\choose k-1}^{-1} + {n\choose k}^{-1}\right){n-2\choose  k-2}\times \\
			& ~~~~~~~~\left. \sum_{j=1}^{k-1}{k-2\choose j-1}{n-2\choose j-1}^{-1}\sum s^j(X_{i_1},\dots, X_{i_j};   \exists 1, \not  \exists 2 ) \right.
		\end{aligned}
	\end{equation*}
	\begin{equation*}
		\begin{aligned}
			C & = - \left({n-1\choose {k-1}}-{n\choose k}^{-1}\right){n-2\choose k-1}\left[ \sum_{j=1}^{k}{k-1\choose j-1}{n-2\choose j-1}^{-1}\sum s^j(X_{i_1},\dots, X_{i_j}; \not \exists 1,  \exists 2) \right] +\\
			&~~~~   \left({n-2\choose k-2}^{-1}-2{n-1\choose k-1}^{-1} + {n\choose k}^{-1}\right){n-2\choose  k-2} \times \\ & ~~~~~~~~\left.\sum_{j=1}^{k-1}{k-2\choose j-1}{n-2\choose j-1}^{-1}\sum s^j(X_{i_1},\dots, X_{i_j};  \not \exists 1,   \exists 2 ) \right.
		\end{aligned}
	\end{equation*}
	
	\begin{equation*}
		\begin{aligned}
			D 
			& = {n\choose k}^{-1}{n-2\choose k}\sum_{j=1}^k {k\choose j}{n-2\choose j}^{-1}\sum s^j(X_{i_1},\dots, X_{i_j}; \not \exists 1,\not \exists 2)- \\
			& ~~~~  \left({n-1\choose {k-1}}-{n\choose k}^{-1}\right){n-2\choose k-1}\left[\sum_{j=1}^{k-1} {k-1\choose j}{n-2\choose j}^{-1}\sum s(X_{i_1},\dots, X_{i_j};\not \exists 1,\not \exists 2) \right] -\\
			& ~~~~  \left({n-1\choose {k-1}}-{n\choose k}^{-1}\right){n-2\choose k-1}\left[\sum_{j=1}^{k-1} {k-1\choose j}{n-2\choose j}^{-1}\sum s^j(X_{i_1},\dots, X_{i_j};\not \exists 1,\not \exists 2) \right]+\\
			& ~~~~  \left({n-2\choose k-2}^{-1}-2{n-1\choose k-1}^{-1} + {n\choose k}^{-1}\right){n-2\choose  k-2}\times \\  
			& ~~~~~~~~ \sum_{j=1}^{k-2} {k-2\choose j}{n-2\choose j}^{-1}\sum s^j(X_{i_1},\dots, X_{i_j}; \not \exists 1, \not \exists 2).
		\end{aligned}
	\end{equation*}
	
	\noindent 
	Let $C_2= \left( {n-2\choose k-2}^{-1}-2{n-1\choose k-1}^{-1} + {n\choose k}^{-1}\right) {n-2\choose  k-2}$, $C_1= \left({n-1\choose {k-1}}^{-1}-{n\choose k}^{-1}\right){n-2\choose k-1} $ and $C_0= {n\choose k}^{-1}{n-2\choose k} $.  Then we have
	\begin{equation}
		\begin{aligned}
			\V(A) & =\sum_{j=2}^{k} {n-2\choose j-2}^{-1}\left(C_2 {k-2\choose j-2}\right)^2V_j\\
			& =\sum_{j=2}^{k} {n-2\choose j-2}^{-1}\left(C_2 {k-2\choose j-2}\right)^2V_j
		\end{aligned}
	\end{equation}

	\begin{equation}
		\begin{aligned}
			\V(B) &= \sum_{j=1}^{k-1} {n-2\choose j-1}\left(- C_1{k-1\choose j-1}{n-2\choose j-1}^{-1} + C_2 {k-2\choose j-1}{n-2\choose j-1}^{-1}\right)^2V_j +\\
			& ~~~~ {n-2\choose k-1}^{-1}\left(-C_1{k-1\choose k-1}\right)^2V_k\\
			& = \sum_{j=1}^k {n-2\choose j-1}^{-1}   \left(- C_1{k-1\choose j-1}+C_2 {k-2\choose j-1}\right)^2V_j \\
		\end{aligned}
	\end{equation}

	\begin{equation}
		\begin{aligned}
			\V(B) & = \V(C)
		\end{aligned}
	\end{equation}

	\begin{equation}
		\begin{aligned}
			\V(D) & = \sum_{j=1}^{k-2} {n-2\choose j}^{-1}\left(C_0{k\choose j} -2C_1{k-1\choose j} + C_2{k-2\choose j}\right) + \\
			& ~~~~  {n-2\choose k-1}^{-1} \left(C_0{k\choose k-1} -2C_1{k-1\choose k-1}\right)+ \\
			& ~~~~ {n-2\choose k}C_0{k\choose k}V_k \\
			& = \sum_{j=1}^{k} {n-2\choose j}^{-1}\left(C_0{k\choose j} -2C_1{k-1\choose j} + C_2{k-2\choose j}\right)V_j.
		\end{aligned}
	\end{equation}

	\noindent Therefore $\E[\sij_\ru(2)] = {k\choose  2}^2/{n\choose 2}\sum_{j=1}^k \lambda_j(2)V_j$, where
	\begin{equation}
		\begin{aligned}
			\lambda_j(2)& =  {n-2\choose j-2}^{-1}\left(C_2 {k-2\choose j-2}\right)^2+ \\
			& ~~~~  2{n-2\choose j-1}^{-1}   \left(- C_1{k-1\choose j-1}+C_2 {k-2\choose j-1}\right)^2+\\
			& ~~~~   {n-2\choose j}^{-1}\left(C_0{k\choose j} -2C_1{k-1\choose j} + C_2{k-2\choose j}\right)^2.  \\
		\end{aligned}
	\end{equation} 
\end{proof} 
\begin{figure}[t]
	\centering
	\includegraphics[width=\textwidth,height=\textheight,keepaspectratio]{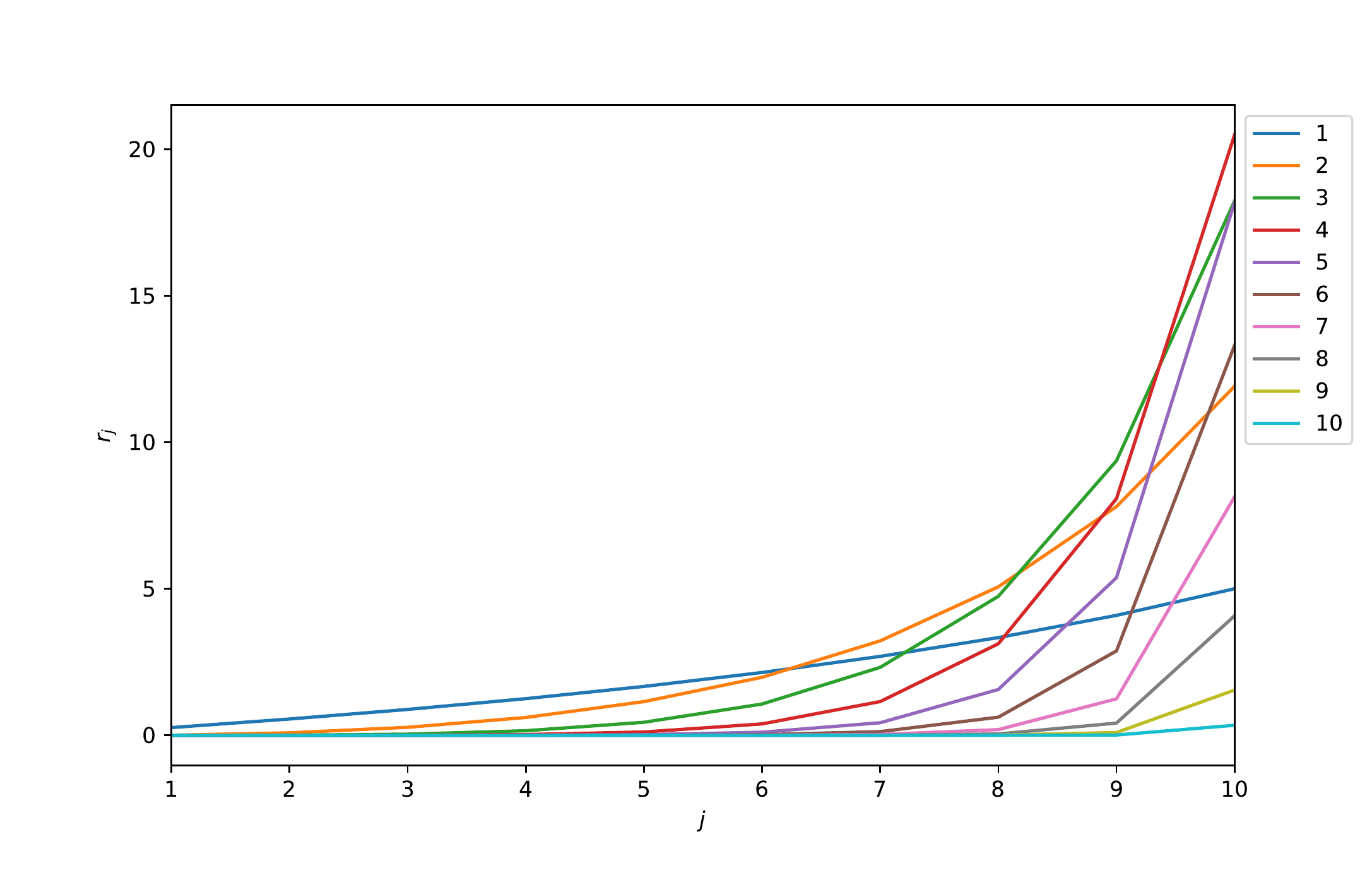}
	\caption{A plot of $\{r_j(d)\}_{j=1}^k$,  where n=20 and k=10}
	\label{rjd}
\end{figure}  
As a simple example, we can take $n=20$ and $k=10$ and plot the curve of $r_j(d)$ to get some insight into how it behaves.  In the interest of consistency, we want for $r_j/r_1$ to be close to 1, at least for small $j$, because $\V(\ru)$ should be dominated by the first several terms. From Figure \ref{rjd} , it appears that $\sij_\ru(1)$ still perform better than the other higher-order estimates.  It is possible that combining $\sij_\ru(d)$ for $d=1,\dots, k$ in some way could yield an estimator that outperforms $\sij_\ru(1)$; this is a potentially interesting topic for future research.

\end{document}